
\documentclass[preprint,12pt]{temp}




\usepackage{amssymb}

\usepackage{amsthm}

\usepackage{amsmath}

\usepackage{tikz}

\usetikzlibrary{decorations.pathreplacing,calligraphy}

\usepackage{algorithm}
\usepackage[noend]{algorithmic}

\usepackage{longtable}

\usepackage{booktabs}

\usepackage{setspace}

\newtheorem{theorem}{Theorem}
\newtheorem{lemma}[theorem]{Lemma}




\newcommand{\med}{M}

\usepackage{lineno}

\journal{}

\begin{document}

\begin{frontmatter}



\title{Algorithms for Shipping Container Delivery Scheduling}

\author{Anna Collins\fnref{kcl}} 
\author{Dimitrios Letsios\fnref{kcl}} 
\author{Gueorgui Mihaylov\fnref{hal}} 



\address[kcl]{Department of Informatics, King's College London, United Kingdom}
\address[hal]{Haleon plc, United Kingdom}


\begin{abstract}
Motivated by distribution problems arising in the supply chain of Haleon, we investigate a discrete optimization problem that we call the \emph{container delivery scheduling problem}.
The problem models a supplier dispatching ordered products with shipping containers from manufacturing sites to distribution centers, where orders are collected by the buyers at agreed due times. 
The supplier may expedite or delay item deliveries to reduce transshipment costs at the price of increasing inventory costs, as measured by the number of containers and distribution center storage/backlog costs, respectively.
The goal is to compute a delivery schedule attaining good trade-offs between the two. 
This container delivery scheduling problem is a temporal variant of classic bin packing problems, where the item sizes are not fixed, but depend on the item due times and delivery times.
An approach for solving the problem should specify a batching policy for container consolidation 
and a scheduling policy for deciding when each container should be delivered.
Based on the available item due times, we develop algorithms with sequential and nested batching policies as well as on-time and delay-tolerant scheduling policies. 
We elaborate on the problem's hardness and substantiate the proposed algorithms with positive and negative approximation bounds, including the derivation of an algorithm achieving an asymptotically tight 2-approximation ratio. 
\end{abstract}



\begin{keyword}
Delivery Scheduling \sep Approximation Algorithms \sep Supply Chain Distribution \sep Container Consolidation
\end{keyword}

\end{frontmatter}


\section{Introduction}


We consider a delivery scheduling problem relevant to optimizing transportation and inventory costs incurred by deliveries from a supplier to a set of customers in a supply chain \cite{Fransoo2013,Jula2011,Levi2008}.
Customers (e.g.\ retailers) issue orders to the supplier (e.g.\ manufacturing site). 
Each order requests different amounts from a portfolio of products and is collected by the customer at a distribution center and at an agreed due time.
The products of one order must be simultaneously transported as a whole to the distribution center.
Figure~\ref{Fig:Setting} depicts this setting.
Because each container delivery is associated with a fixed cost, the supplier uses  consolidation, i.e.\ may combine multiple orders in one container, to reduce the number of used containers \cite{Tsertou2016}.  
However, this may result in deliveries to the distribution center not well synchronized with the corresponding order due times, thus inventory costs.
Specifically, if an order arrives before the agreed due time, then a storage cost is incurred.
Likewise, if an order arrives after the agreed due time, then a backlog cost in incurred.
Our objective is to decide when shipments should take place and the orders that each shipment should include so that the number of shipments is minimized and the total inventory cost is bounded.

Tolerating a bound on the total inventory cost allows reducing transportation costs, but may incur significant storage or backlog costs for a subset of items, thus can be problematic.
On one hand, lengthy storage periods, especially in the case of perishable goods, deteriorate product quality.
On the other hand, substantial delivery delays, especially in the case of new buyers, may result in customer attrition or additional mitigation costs.
As a preventive measure, we impose an upper bound on the total inventory cost incurred by any subset of orders that are shipped together.
In our application context, this bound can be used by the manufacturer to compute trade-offs between transportation and inventory costs for production planning purposes and for negotiating purchase order terms and conditions.

\begin{figure}[t!]
\begin{center}
\includegraphics[scale=0.55]{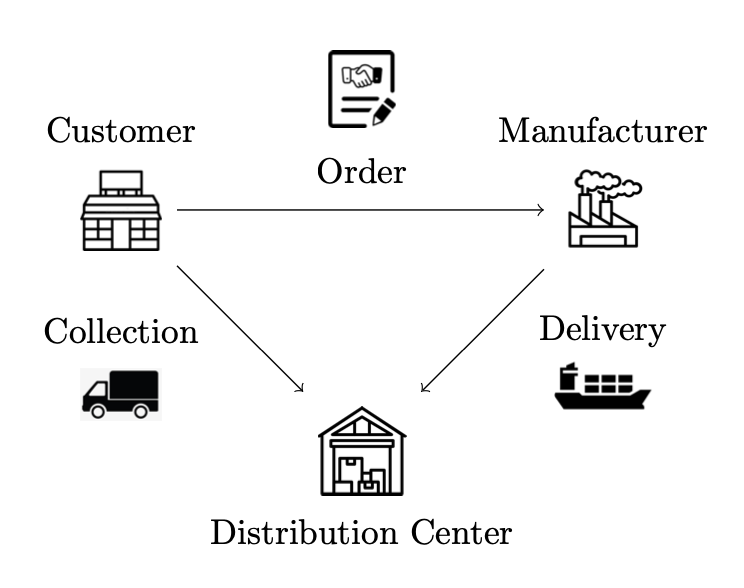}
\end{center}
\vspace*{-0.5cm}
\caption{Container delivery scheduling problem setting.}
\label{Fig:Setting}
\end{figure}


More formally, the \emph{container delivery scheduling problem} can be described as follows.
There is a set $\mathcal{I}$ of items to be transferred from a manufacturing site to a distribution center.
An item $i\in \mathcal{I}$ contains all products purchased by one customer order and is associated with a due time $d_i$. 
The goal is to assign each item $i$ to a container (or simply \emph{bin}).
If a bin delivery occurs at time $\tau$, then the subset $\mathcal{S}\subseteq\mathcal{I}$ of items in the bin are available in the distribution center at $\tau$ for collection by the customers.
In such a case, an inventory cost $\sum_{i\in\mathcal{S}}|d_i-\tau|$ is incurred, which corresponds to a storage cost $(d_i-\tau)$ for each item such that $d_i>\tau$, and a backlog cost $\tau-d_i$ for any item $i$ with $d_i<\tau$. 
The objective is to assign the items into a minimal number of bins and schedule their deliveries so that the inventory cost of each bin delivery is upper bounded by $B$.


An approach for solving the problem should specify a batching policy for grouping items into bins and a scheduling policy for deciding bin delivery times.
Our contribution is threefold: we propose algorithmic approaches for solving the problem using different batching and scheduling policies \cite{Hall2003,Letsios2021}, we analyze the approximability of the resulting algorithms and derive positive and negative bounds \cite{Christensen2017,Letsios2020}, we elaborate on the problem's hardness and discuss practical implications \cite{Garey1979}.
Compared to classic bin packing approaches, our results use new bounds and structural properties exploiting the temporal nature of container delivery scheduling.
Combining transportation and inventory costs is the key technical difficulty in the analysis \cite{Chitsaz2019,Elimam2013}.

\paragraph{Related Work}

The container delivery scheduling problem is related to the batch delivery scheduling problem introduced by \cite{Hall2003}.
Both problems include a single supplier receiving orders by multiple customers, that have to be delivered in batches with the aim of optimizing the number of shipments required for the distribution and the total inventory cost.
However, the former models inventory management in the distribution side, while the latter models inventory management in the production side. 
In particular, we investigate packing products to batch deliveries and explore trade-offs between the number of shipments and inventory storage/backlog costs at distribution centers, while \cite{Hall2003} focus on sequencing production jobs in a single machine and storing produced items in the manufacturing site before distribution to increase batch sizes during shipping. 
The batch delivery scheduling problem is strongly $\mathcal{NP}$-hard and admits a greedy 3/2-approximation algorithm \cite{Selvarajah2009}.

The container delivery scheduling problem can be viewed as parallel machine scheduling with the objective of minimizing the total earliness and tardiness \cite{Baker1990}.
Feasible solutions to the two problems calculate inventory costs of ordered items before these are delivered to the corresponding buyers in the same way.
However, there are two key differences.
First, our objective is to minimize the number of deliveries, whereas earliness tardiness scheduling assumes a fixed number of deliveries.
Second, we incorporate individual bounds on the inventory cost of different deliveries, whereas earliness tardiness scheduling minimizes the total inventory cost as a whole.
This discrepancy in the objective function and constraints results in algorithms and bounds for the container delivery scheduling problem that are of different form compared to ones for earliness tardiness scheduling.
The earliness tardiness scheduling problem is $\mathcal{NP}$-hard even in the case of a single machine (i.e.\ bin in our context) and admits a fully-polynomial time approximation scheme \cite{Kovalyov1999}.
Bounds and structural properties applicable to the parallel machine version of the problem are given in \cite{Jabbari2021,Kedad2008,Rolim2020}.

Finally, the container delivery scheduling problem is a variant of temporal bin packing problems 
\cite{DellAmico2020}. 
Both problem types seek an assignment of items to bins and incorporate time by associating request/due times to items.
Nevertheless, our model captures supply chain batch shipping with bins used for a limited amount of time, whereas temporal bin packing is typically used for cloud computing applications and assumes bins available during an entire time horizon of interest \cite{Aydin2020}.
Also, we impose capacity constraints on the inventory costs incurred by bin deliveries, whereas temporal bin packing problems impose capacity contraints on the total sizes of items in bins.

\paragraph{Contributions and Paper Organization}


Despite the aforementioned commonalities with prior literature, to the authors knowledge, this work is the first to develop provably efficient algorithms for the proposed container delivery scheduling problem \cite{Christensen2017}.
The main paper contribution is the development and analysis of container delivery scheduling algorithms producing solutions by using different consolidation and scheduling policies.
Note that each delivered container is associated with a time interval between the earliest and latest due time of an item assigned to that container. 
Based on the structure of these time intervals, we distinguish sequential and nested consolidation policies.
The remainder of the manuscript proceeds as follows.

Section~\ref{Section:Preliminaries} defines the bin delivering scheduling problem, introduces notation, and shows that the problem is $\mathcal{NP}$-hard.
Section~\ref{Section:Sequential_Algorithms} elaborates on the approximability of algorithms greedily producing sequential solutions based on three different scheduling policies. 
Initially, we demonstrate that the algorithm achieves an arbitrarily large approximation ratio with only early deliveries (or only late deliveries).
However, we show that it becomes 4-approximate when it combines early and late deliveries.
Next, we prove that the algorithm attains an 8/3-approximation ratio when using median times for scheduling bin deliveries.
The analysis is based on (1) a time horizon partitioning consisting of time intervals with consecutive bin deliveries in the algorithm's solution, (2) bounding and accumulating the inventory cost incurred by the items of every interval in the optimal solution by accounting for the number of deliveries taking place therein.



Section~\ref{Section:Decoupling_Algorithm} proposes a 3-approximation algorithm that produces nested solutions by decoupling delivery scheduling from batching decisions.
This decoupling algorithm consists of a dynamic programming component for computing delivery times resulting in a minimal total inventory cost and a classic bin packing component for assigning items to bin deliveries.
The algorithm has the advantage of computing solutions with low total inventory cost, but the limitation that the bin utilization might be low and result in high transportation costs. 
To overcome this limitation, we refine the scheduling decisions after running the algorithm.
This approach allows obtaining a refined decoupling algorithm which is tightly 2-approximate.
Section~\ref{Section:Conclusion} concludes with a brief discussion on our results and future directions.

\section{Preliminaries}
\label{Section:Preliminaries}


A problem instance $\langle \mathcal{I},B\rangle$ consists of a set $\mathcal{I}=\{1,\ldots,n\}$ of $n$ items and an inventory bound $B$.
Each item $i\in\mathcal{I}$ is associated with a due time $d_i\in\mathbb{Z}^+$.
Set $\delta_{i}(\tau)=|d_i-\tau|$, for each item $i\in\mathcal{I}$ and time $\tau\geq0$.
Also, set $\delta(\sigma,\tau)=|\sigma-\tau|$ equal to the length of the time interval between every pair of times $\sigma,\tau\geq0$.
Each item must assigned to/packed in exactly one bin delivery, i.e.\ a feasible solution $\mathcal{S}$ is a partitioning of the items into $m$ of subsets $\mathcal{S}_1,\ldots,\mathcal{S}_m\subseteq\mathcal{I}$, where $\mathcal{S}_j$ is the subset of items packed in bin $j$.
Further, we must decide a time at which bin $\mathcal{S}_j$ will be delivered at the pick-up location, for $j\in\{1,\ldots,m\}$. 
If bin $\mathcal{S}_j$ is delivered at time $\mu_j$, then the items in $\mathcal{S}_j$ are available for collection in the pick-up location by the respective customers at $\mu_j$.
In this case, the bin is associated with an inventory cost $\sum_{i\in\mathcal{S}_j}\delta_i(\mu_j)$, where $\delta_i(\mu_j)$ corresponds to a storage cost for each item such that $d_i>\mu_j$, and a backlog cost for any item $i$ with $d_i<\mu_j$. 
We denote the delivery time of item $i\in\mathcal{I}$ by $v_i$, that is $v_i=\mu_j$, if $i\in\mathcal{S}_j$.
The inventory cost incurred by a bin delivery must be upper bounded by $B$.
The objective is to pack the items into bins and schedule the bin deliveries so that the number $m$ of used bins is minimized, and the bin inventory constraints are satisfied, i.e.\ $\sum_{i\in\mathcal{S}_j}\delta_i(\mu_j)\leq B$ for every $j\in\{1,\ldots,m\}$.

Consider the time interval $[a_j,b_j]$ associated with bin $j$ in $\mathcal{S}$, where $a_j=\min_{i\in\mathcal{S}_j}\{d_i\}$ and $b_j=\max_{i\in\mathcal{S}_j}\{d_i\}$.
Based on the structure of these intervals, we distinguish sequential and nested solutions. 
Solution $\mathcal{S}$ is \emph{sequential} if there is an ordering of the bins s.t.\ $\mu_1\leq\ldots\leq\mu_m$ and $d_i\leq d_{i'}$, for each pair of items $i\in\mathcal{S}_j$ and $i'\in\mathcal{S}_{j'}$ with $1\leq j<j'\leq m$.
Solution $\mathcal{S}$ is \emph{nested}, if the bins can be numbered s.t.\ either $[a_j,b_j]\supseteq[a_{j'},a_{j'}]$ or $[a_j,b_j]\cap[a_{j'},b_{j'}]=\emptyset$, for each $1\leq j<j'\leq m$.
Sequential solutions are obtained when orders are processed on a first-come first-served basis.
Nested solutions enable more complex order priorities. 
The delivery time $\mu_j$ of bin $j\in\{1,\ldots,n\}$ is characterized as \emph{early}, \emph{median}, or \emph{late} based on whether it is close to $a_j$, the median of the $\{d_i:i\in\mathcal{S}_j\}$ values, and close to $b_j$, respectively.

Next, we present a key property of an optimal solutions and prove the problem's $\mathcal{NP}$-hardness.
Consider a feasible solution $\mathcal{S}$ with $m$ bins $\mathcal{S}_1,\ldots,\mathcal{S}_m$.
Let $\mathcal{S}_j^-=\{i:d_i\leq \mu_j,i\in\mathcal{S}_j\}$ and $\mathcal{S}_j^+=\{i:d_i>\mu_j,i\in\mathcal{S}_j\}$ be the subsets of items in $\mathcal{S}_j$ due not later than and after the delivery time $\mu_j$, respectively, for $j\in\{1,\ldots,m\}$.
Also, let $n_j^-=|\mathcal{S}_j^-|$ and $n_j^+=|\mathcal{S}_j^+|$ be the cardinalities of these sets.
Lemma~\ref{Lemma:Bin_Delivery_Time} shows how to compute the delivery time of a bin once we know its content in an optimal solution.
Theorem~\ref{Thm:NP-hardness} characterizes the problem's computational complexity.

\begin{lemma}
\label{Lemma:Bin_Delivery_Time}
For each instance $\langle \mathcal{I},B\rangle$ of the container delivery scheduling problem, there exists an optimal solution $\hat{\mathcal{S}}$ with $\hat{m}$ bins s.t.\ (a) $\hat{n}_j^-=\hat{n}_j^+$ and (b) $\hat{\mu}_j\in\{d_i:i\in\mathcal{I}\}$, for each $j\in\{1,\ldots,\hat{m}\}$.
\end{lemma}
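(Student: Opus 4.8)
The plan is to start from an arbitrary optimal solution $\mathcal{S}$ with $\hat m$ bins, freeze the partition of $\mathcal{I}$ into $\mathcal{S}_1,\dots,\mathcal{S}_{\hat m}$, and show that the delivery time of each bin can be relocated \emph{independently} to a due time that balances the bin's items around it, without increasing that bin's inventory cost. Since relocating delivery times changes neither the item-to-bin assignment nor the number of bins, the resulting solution $\hat{\mathcal{S}}$ stays feasible and optimal, and the argument reduces to a one-dimensional problem solved bin by bin.

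Fix a bin and consider its inventory cost as a function of the delivery time, $f_j(\tau)=\sum_{i\in\mathcal{S}_j}\delta_i(\tau)=\sum_{i\in\mathcal{S}_j}|d_i-\tau|$. As a finite sum of absolute values, $f_j$ is convex and piecewise linear, with all breakpoints at the due times $\{d_i:i\in\mathcal{S}_j\}$. On an interval strictly between two consecutive due times the slope of $f_j$ equals (number of items due before $\tau$) minus (number due after $\tau$); this slope is nondecreasing in $\tau$ and changes sign at a median of $\{d_i:i\in\mathcal{S}_j\}$. Hence $f_j$ is minimized at such a median, and since the minimizing set always contains a breakpoint, one can pick a minimizer $\hat\mu_j\in\{d_i:i\in\mathcal{I}\}$, giving property (b). Because $\hat\mu_j$ minimizes $f_j$, we have $f_j(\hat\mu_j)\le f_j(\mu_j)\le B$, so the inventory bound remains satisfied; applying this to every bin keeps the count at $\hat m=\mathrm{OPT}$, so $\hat{\mathcal{S}}$ is optimal.

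For property (a) I would read the balance off the first-order optimality conditions at $\hat\mu_j$. The right derivative of $f_j$ at $\hat\mu_j$ equals $\hat n_j^- - \hat n_j^+$ and the left derivative equals $|\{i\in\mathcal{S}_j:d_i<\hat\mu_j\}|-|\{i\in\mathcal{S}_j:d_i\ge\hat\mu_j\}|$; minimality forces the former to be nonnegative and the latter nonpositive. Using the tie-breaking convention built into the definitions (items with $d_i=\hat\mu_j$ belong to $\mathcal{S}_j^-$) and selecting the cost-minimizing due time that equalizes the two sides then yields $\hat n_j^-=\hat n_j^+$, as required.

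The main obstacle is precisely this last, discrete step: unlike the clean continuous median characterization, the equality $\hat n_j^-=\hat n_j^+$ is sensitive both to ties among the due times and to how due times coinciding with $\hat\mu_j$ are split between $\mathcal{S}_j^-$ and $\mathcal{S}_j^+$. The delicate point is to show that within the minimizing interval $[d_{(\ell)},d_{(\ell+1)}]$ of the sorted due times one can always pick a breakpoint at which the counts balance \emph{and} which is simultaneously a valid due time, so that (a) and (b) hold at the same $\hat\mu_j$; I would establish this by an explicit case analysis on the multiplicities of the due times around the median.
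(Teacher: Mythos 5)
Your core approach is sound and is essentially the paper's argument in convex-analysis clothing: both proofs fix the item-to-bin partition and re-optimize each bin's delivery time independently, using the fact that $f_j(\tau)=\sum_{i\in\mathcal{S}_j}|d_i-\tau|$ is minimized at a median of the bin's due times, which can be taken to be a due time itself; feasibility is preserved because the per-bin cost only decreases, and the bin count is untouched. The only difference is how median optimality is established: the paper takes an optimal solution with minimal $\hat{\mu}_j$, shifts $\hat{\mu}_j$ left by $\epsilon$ when $\hat{n}_j^->\hat{n}_j^++1$, verifies the cost change $(\hat{n}_j^+-\hat{n}_j^-)\epsilon+2(d_{\hat{n}_j^-}-\mu_j)\le 0$ keeps the bin feasible, and derives a contradiction; you read the same fact off the subgradient conditions of a convex piecewise-linear function. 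These are the same perturbation, phrased once as an exchange argument and once as first-order optimality.

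On the ``delicate last step'' you flag for part (a): you should know it cannot be completed as stated, because property (a) is literally unachievable in general, and this is a defect of the lemma, not of your argument. If $|\mathcal{S}_j|$ is odd, then $\hat{n}_j^-+\hat{n}_j^+$ is odd and equality is impossible for every choice of $\hat{\mu}_j$. Likewise, if the two middle order statistics of the due times in an even-size bin coincide, then any $\hat{\mu}_j$ at or above that common value gives $\hat{n}_j^-\ge \hat{n}_j^++2$, and any $\hat{\mu}_j$ below it gives $\hat{n}_j^+\ge \hat{n}_j^-+2$, so $|\hat{n}_j^--\hat{n}_j^+|\ge 2$ always. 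The paper's own proof quietly sidesteps both obstructions: it assumes all due times are distinct (the infinitesimal $\eta$) and relegates the case $\hat{n}_j^-=\hat{n}_j^++1$ to ``similar arguments,'' even though in that case no improving shift exists and equality is unattainable. The correct reading of (a) --- and the one actually used later in the paper (median-time scheduling, the $\lambda(h,i)$ computation in the dynamic program) --- is that $\hat{\mu}_j$ can be taken to be a median due time, equivalently that $|\hat{n}_j^--\hat{n}_j^+|$ is as small as parity and ties permit. Under that reading, your subgradient argument (right derivative $\hat{n}_j^--\hat{n}_j^+\ge 0$, left derivative $\le 0$ at a minimizing breakpoint) is already a complete proof; the multiplicity case analysis you were planning is unnecessary, and no case analysis could recover exact equality anyway.
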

\begin{proof}
Consider an optimal solution $\hat{\mathcal{S}}$ with $\hat{m}$ bins, and assume for contradiction that $\hat{n}_j^-\neq \hat{n}_j^+$, for some $j\in\{1,\ldots,\hat{m}\}$.
W.l.o.g.\ $\hat{n}_j^->\hat{n}_j^++1$.
The cases $\hat{n}_j^-=\hat{n}_j^++1$ and $\hat{n}_j^-<\hat{n}_j^+$ can be handled with similar arguments.
Among all optimal solutions, consider as $\hat{\mathcal{S}}$ one for which $\hat{\mu}_j$ is minimal.
For simplicity, suppose that the items in $\hat{\mathcal{S}}_j$ are numbered as $\{1,\ldots,n_j\}$ in non-decreasing order of due dates, that is $d_1\leq\ldots\leq d_{n_j}$, and that there are no identical due times, i.e.\ $|d_i-d_{i'}|>\eta$, for an infinitesimal $\eta>0$ and every pair of items $i,i'\in\mathcal{I}$ s.t.\ $i\neq i'$.
By definition, $\hat{\mathcal{S}}_j$ must have a delivery time $\hat{\mu}_j\in[d_{\hat{n}_j^-},d_{\hat{n}_j^-+1})$.
Now, consider a new solution $\mathcal{S}$ that packs the items as in $\hat{\mathcal{S}}$, i.e.\ $\mathcal{S}_{j'}=\hat{\mathcal{S}}_{j'}$ for each $j'\in\{1,\ldots,\hat{m}\}$, and has the same bin delivery times except bin $j$.
In particular, $\mu_{j'}=\hat{\mu}_{j'}$, for $j'\in\{1,\ldots,\hat{m}\}\setminus\{j\}$, and $\mu_j=\hat{\mu}_j-\epsilon$, where $\epsilon>0$ is a value s.t.\ $\mu_j=d_{n_{j}^--1}$ with item $n_j^-$ belonging to $\mathcal{S}_j^+$ in $\mathcal{S}$.
The total inventory cost incurred by the items in $\mathcal{S}_j$ is:
\begin{align*}
\sum_{i\in\mathcal{S}_j}|d_i-\mu_j| & = \sum_{i\in\mathcal{S}_j^-}(\mu_j-d_i) 
+ \sum_{i\in\mathcal{S}_j^+}(d_i-\mu_j) \\
& = \sum_{i\in\hat{\mathcal{S}}_j^-\setminus\{\hat{n}_j^-\}}(\mu_j-d_i) + \sum_{i\in\hat{\mathcal{S}}_j^+\cup\{\hat{n}_j^-\}}(d_i-\mu_j) \\
& = \sum_{i\in\hat{\mathcal{S}}_j^-}(\mu_j-d_i) + \sum_{i\in\hat{\mathcal{S}}_j^+} (d_i-\mu_j) + 2(d_{\hat{n}_j^-}-\mu_j) \\
& = \sum_{i\in\hat{\mathcal{S}}_j^-}(\hat{\mu}_j-d_i) + \sum_{i\in\hat{\mathcal{S}}_j^+}(d_i-\hat{\mu}_j) + (|\hat{\mathcal{S}}_j^+|-|\hat{\mathcal{S}}_j^-|)\epsilon + 2(d_{n_j^-}-\mu_j) \\
& \leq B,
\end{align*}
where the second equality uses the fact that $\mathcal{S}_j^-=\hat{\mathcal{S}}_j^-\setminus\{n_j^-\}$ and $\mathcal{S}_j^+=\hat{\mathcal{S}}_j^+\cup\{n_j^-\}$, the third equality is based on simple algebraic manipulation, the forth equality holds because $\mu_j=\hat{\mu}_j-\epsilon$, while the inequality is obtained by the fact that $\hat{\mathcal{S}}$ is feasible, i.e.\ $\sum_{i\in\hat{\mathcal{S}}_j}|d_i-\mu_j|\leq B$, and our assumptions $n_j^+-n_j^-\leq -2$ and $d_{n_j^-}-\mu_j\leq\epsilon$.
Thus, we obtain a contradiction on the fact that $\hat{S}$ assigns a minimal delivery time to bin $j$.
A similar argument implies that $\hat{\mu}_j\in\{d_i:i\in\mathcal{I}\}$, for each $j\in\{1,\ldots,\hat{m}\}$.
\end{proof}

\begin{theorem}
\label{Thm:NP-hardness}
Container delivery scheduling is $\mathcal{NP}$-hard.
\end{theorem}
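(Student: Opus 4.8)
The plan is to give a polynomial-time reduction from a strongly $\mathcal{NP}$-hard number-partitioning problem; the natural candidate is 3-Partition, i.e.\ given $3m$ integers $a_1,\dots,a_{3m}$ with each $a_i\in(T/4,T/2)$ and $\sum_i a_i=mT$, decide whether the indices split into $m$ triples each summing to exactly $T$. The goal of the construction is an instance $\langle\mathcal{I},B\rangle$ together with a target bin count $m$ such that the items fit into $m$ bins if and only if the required triples exist, where $B$ is chosen equal to $T$ (up to a fixed scaling). Reducing from 3-Partition yields strong $\mathcal{NP}$-hardness, which in particular establishes the claimed $\mathcal{NP}$-hardness.

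The encoding I would attempt represents each number $a_i$ by a small gadget of items whose due times are offsets from one common reference time $c$, placed symmetrically about $c$, so that when a gadget's items are delivered together by a bin whose delivery time is $c$ they contribute exactly $a_i$ (or a fixed multiple of $a_i$) to that bin's inventory cost, while items located at $c$ contribute nothing. By Lemma~\ref{Lemma:Bin_Delivery_Time} I may assume every bin is delivered at a median time lying at a due time, so each bin's cost equals the sum of absolute deviations of its items from that median, which lets me reason about costs combinatorially. Exploiting $a_i\in(T/4,T/2)$, the intended invariant is that a bin respects budget $B=T$ precisely when it carries the gadgets of at most three numbers whose values sum to at most $T$; since there are $3m$ numbers and only $m$ bins, a feasible $m$-bin schedule must place exactly three gadgets per bin, and budget feasibility then forces each triple to sum to exactly $T$, recovering the partition.

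The forward direction is routine: a valid 3-Partition induces $m$ balanced bins, each delivered at $c$ with cost exactly $T=B$. The main obstacle is the reverse direction, namely proving that the encoding is \emph{robust} --- that a scheduler cannot beat the intended cost by splitting a gadget across two bins, by co-locating many items on one side of a bin's median, or by using a non-interval (nested) grouping to shift a bin's median and thereby deflate its cost enough to smuggle a fourth gadget into it. Ruling out these ``cheating'' groupings is where the real work lies: I would fix each delivery at a median via Lemma~\ref{Lemma:Bin_Delivery_Time}, choose the reference offsets with enough separation and integrality that distinct gadgets cannot interleave cheaply and no fractional savings are possible, and then argue by an exchange/counting argument that any median placement of a bin carrying four or more gadgets must incur cost exceeding $B$. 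Establishing this robust additivity of the per-bin inventory cost is the crux; once it is in hand, the standard 3-Partition counting argument closes both directions.
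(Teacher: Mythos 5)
Your high-level route (a polynomial reduction from 3-Partition, using Lemma~\ref{Lemma:Bin_Delivery_Time} to restrict attention to median deliveries at due times) matches the paper's, but your construction as sketched has a genuine gap, and it is exactly the one you flag as ``the crux'' without resolving it. The problem is that a single common reference time $c$ with gadgets placed symmetrically about it cannot pin down the delivery times of the bins. At most one bin is forced to deliver near $c$ (the one absorbing whatever mass you place at $c$); the remaining $m-1$ bins are free to deliver anywhere. Since the left-side items of your gadgets all lie in a short range such as $(c-T/4,\,c-T/8)$, a scheduler can collect one-sided items from many gadgets into a bin delivered at their median and pay far less than the sum of the corresponding $a_i$ --- in the extreme case where many $a_i$ coincide, all of their left items share one due time and can be packed into a single bin at \emph{zero} inventory cost. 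So the intended invariant ``cost of a bin $=$ sum of the gadget values it carries'' simply fails, and no amount of ``separation and integrality'' of the offsets repairs it, because the failure comes from the scheduler's freedom to choose delivery times, not from interleaving or rounding effects.

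The paper closes this hole with a mechanism your proposal lacks: \emph{anchors} that force all $m$ delivery times simultaneously. It adds $m$ clusters, each of $m^2\beta^4$ items, at $m$ distinct times $\tau_j=d+(\beta/3)+j(1/m\beta)$. With only $m$ bins available, some bin must contain at least $m\beta^4$ items of any cluster, and if its delivery time were off by even $1/(m\beta)$ the cost would exceed $B=\beta^2+\beta+3$; hence every one of the $m$ bins must be delivered at a distinct anchor $\tau_j$. Only then does additivity hold: each number item, encoded at due time $d-a_i\beta$, sits at distance at least $\beta^2/4+\beta/3$ from every anchor, so at most three number items fit in any bin, and each contributes roughly $a_i\beta+\beta/3$, which forces each bin's triple to sum to exactly $\beta$. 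To complete your proof you would need to replace the single symmetric reference point by some analogous device that removes the scheduler's scheduling freedom --- one anchor per bin, with cluster sizes polynomially bounded so the reduction stays polynomial (this is why the paper insists on 3-Partition with $\beta=p(m)$, i.e.\ strong $\mathcal{NP}$-hardness of the source problem).
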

\begin{proof}
We present a reduction from 3-Partition which is known to be $\mathcal{NP}$-hard even with polynomially bounded parameters \cite{Garey1979}.
A 3-Partition instance $\langle\mathcal{A},\beta\rangle$ consists of a set $\mathcal{A}=\{a_1,\ldots,a_{3m}\}$ of $3m$ positive integers and an integer $\beta$ such that $\sum_{i\in\mathcal{A}}a_i=m\beta$ and $\beta/4\leq a_i\leq \beta/2$, for $i\in\mathcal{A}$.
We consider the case $\beta=p(m)$, where $p(m)$ is a polynomial of $m$.
The objective is to partition $\mathcal{A}$ into $m$ subsets $\mathcal{A}_1,\ldots,\mathcal{A}_m$ such that $\sum_{i\in \mathcal{A}_j}a_i=\beta$, for each $j\in\{1,\ldots,m\}$.
Starting from a 3-Partition instance, we construct a container delivery scheduling instance with $n=3m+m^3\beta^4$ items as follows. 
We set $\alpha=\max_{i\in\mathcal{A}}\{a_i\}$, assume w.l.o.g.\ that $a_1\geq a_2\geq\ldots\geq a_{3m}$, and select $d=\alpha\cdot\beta$.
We introduce $3m$ items with $d_i=d-a_i\cdot{\beta}$, for $i\in\{1,\ldots,3m\}$.
Further, we add $m^2\beta^4$ items with due time at $\tau_j=d+(\beta/3)+j(1/m\beta)$, for each $j\in\{1,\ldots,m\}$.
Figure~\ref{Fig:NP_Hardness_Construction} illustrates the constructed instance.
We argue that $\mathcal{A}$ admits a 3-Partition iff there exists a feasible solution to the container delivery scheduling instance $(\mathcal{I},B)$ with $m$ bins, where $\mathcal{I}=\mathcal{A}$ and $B=\beta^2+\beta+3$.

\begin{figure}[t]
\begin{center}
\includegraphics[scale=0.55]{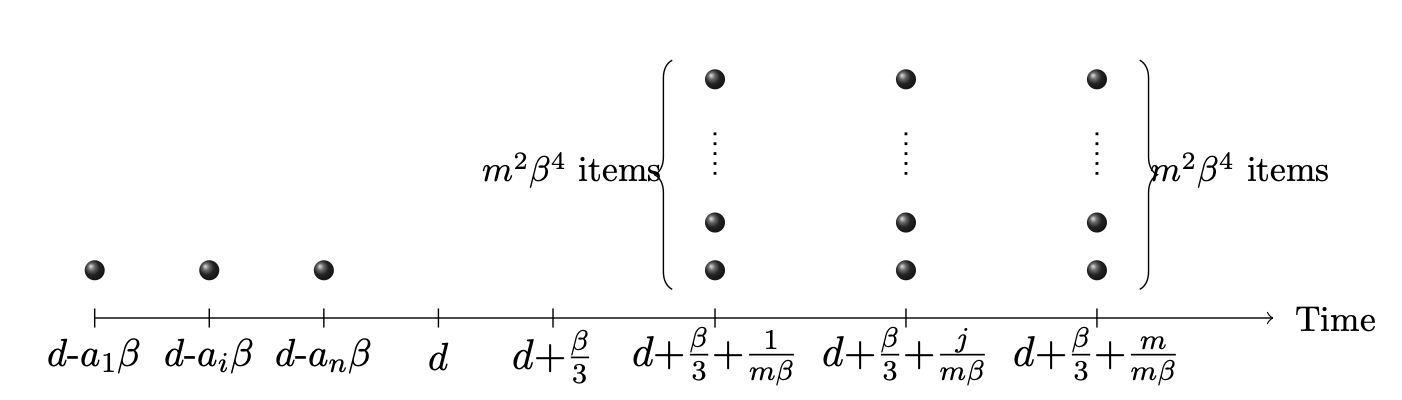}
\end{center}
\vspace*{-0.5cm}
\caption{NP-hardness construction. Each item is represented as a ball with a due time.}
\label{Fig:NP_Hardness_Construction}
\end{figure}


$\Longrightarrow$ :
Assume that $\mathcal{A}$ admits a 3-Partition $\mathcal{A}_1,\ldots,\mathcal{A}_m$.
Note that $|\mathcal{A}_j|=3$, for each $j\in\{1,\ldots,m\}$.
We build a feasible solution to the container delivery scheduling instance $\langle\mathcal{I},B\rangle$ with $m$ bins.
Bin $j\in\{1,\ldots,m\}$ contains the items in $\mathcal{A}_j$ that are due before $d$ and $m^2\beta^4$ items with an identical due time after $d$, i.e.\ $\mathcal{S}_j=\{i:i\in \mathcal{A}_j\}\cup\{i:d_i=\tau_j\}$, for $j\in\{1,\ldots,m\}$.
In addition, bin $j$ has delivery time $\mu_j=\tau_j$. 
The constructed solution is feasible because, for each $j\in\{1,\ldots,m\}$, we have that:
\begin{align*}
\sum_{i\in\mathcal{S}_j}|d_i-\mu_j| 
& = \sum_{i\in \mathcal{A}_j}[d+(\beta/3)+j(1/m\beta)-(d-a_i\beta)] \\
& \leq \sum_{i\in \mathcal{A}_j}a_i\beta + |\mathcal{A}_j|(\beta/3+1/\beta) \\
& \leq \beta^2 + \beta + 3. 
\end{align*}

$\Longleftarrow$ :
Suppose that there exists a feasible solution $\mathcal{S}$ for the container delivery scheduling instance 
$(\mathcal{I},B)$  with $m$ bins.
We divide the set of distinct delivery times into $\mathcal{R}^-=\{d-a_1\beta,\ldots,d-a_{3m}\beta\}$ and $\mathcal{R}^+=\{d+(\beta/3)+1/m\beta,\ldots,d+(\beta/3)+1/\beta\}$.
By Lemma~\ref{Lemma:Bin_Delivery_Time}, each bin delivery takes place at a time point in  
$\mathcal{R}=\mathcal{R}^-\cup\mathcal{R}^+$ in $\mathcal{S}$.

First, we claim that bin deliveries may only occur after time $d$, i.e.\ $\mu_j\in\mathcal{R}^+$ for each $j\in\{1,\ldots,m\}$, in $\mathcal{S}$.
Assume for contradiction that there exists one among the $m$ bin deliveries in $\mathcal{S}$ takes place before $d$.
Then, there also exists is a time point $\tau_j=d+(\beta/3)+j(1/m\beta)$, $j\in\{1,\ldots,m\}$, at which no delivery occurs.
Since $m^2\beta^4$ items are due at $\tau_j$, some bin $j'\in\{1,\ldots,m\}$ in $\mathcal{S}$ contains at least $m\beta^4$ of those items.
Clearly, $|\mu_{j'}-\tau_j|\geq 1/m\beta$.
So, it must be the case that $\sum_{i\in\mathcal{S}_{j'}}|d_i-\mu_{j'}|\geq m\beta^4(1/m\beta)>\beta^2+\beta+3$, which contradicts that $\mathcal{S}$ is feasible.
Thus, $\mathcal{S}$ uses $m$ bins each of which is delivered at one of the $m$ points in $\mathcal{R}^+$.

Next, let $\mathcal{A}_j=\{i:d_i\leq d,i\in\mathcal{S}_j\}$ be the subset of items in bin $j$ of $\mathcal{S}$ which have been constructed from integers in the 3-Partition instance $\langle\mathcal{A},\beta\rangle$.
We claim that $\mathcal{S}_j$ contains exactly 3 items originally obtained from $\mathcal{A}$, i.e.\ $|\mathcal{A}_j|=3$, for each $j\in\{1,\ldots,m\}$.
If this is not the case, then there exists a bin $j\in\{1,\ldots,m\}$ with $\geq4$ items.
By taking into account the previous claim, we have that $\mu_j-d_i\geq (d+\beta/3)-(d-\alpha_i\beta)\geq \beta^2/4+\beta/3$, for each $i\in\mathcal{A}_j$.
Hence, $\sum_{i\in\mathcal{S}_j}|d_i-\mu_j|\geq 4(\beta^2/4+\beta/3)>\beta^2+\beta+3$, a contradiction.

Finally, we claim that the original 3-Partition items $\mathcal{A}_j$ in bin $j$ sum to $\beta$, i.e.\ $\sum_{i\in\mathcal{A}_j}a_i=\beta$ for each $j\in\{1,\ldots,m\}$.
If not, then there exists a bin $j\in\{1,\ldots,m\}$ such that $\sum_{i\in\mathcal{A}_j}a_i\geq\beta+1$.
For this bin, we get that:
\begin{align*}
\sum_{i\in\mathcal{S}_j}|d_i-\mu_j|
& \geq \sum_{i\in\mathcal{A}_j}[d+(\beta/3)-(d-\beta a_i)] \\
& = \sum_{i\in\mathcal{A}_j}a_i\beta + |\mathcal{A}_j|(\beta/3) \\
& \geq (\beta+1)\beta + 3(\beta/3) > \beta^2+\beta+3
\end{align*}
We conclude that $\mathcal{A}$ admits a 3-Partition.
\end{proof}


\section{Greedy Sequential Algorithms}
\label{Section:Sequential_Algorithms}

This section elaborates on the approximability of algorithms producing sequential solutions.
Section~\ref{Section:Sequential_Algorithm_Description} 
describes sequential algorithms
using a greedy first-fit policy for assigning items to bins and different delivery scheduling policies.
On the negative side, Section~\ref{Section:Early_Deliveries} shows that 
an early delivery scheduling policy results in an arbitrarily bad approximation ratio.
On the positive side, Section~\ref{Section:Early_Late_Deliveries} demonstrates that a scheduling policy combining early and late deliveries
allows computing asymptotically 4-approximate solutions.
Section~\ref{Section:Median_Deliveries} derives an asymptotically 8/3-approximation algorithm by using a median-time 
delivery scheduling policy.

\subsection{Algorithm Description}
\label{Section:Sequential_Algorithm_Description}

Initially, the algorithm sorts the items in non-decreasing due times $d_1\leq\ldots\leq d_n$.
Next, it opens a bin $j=1$ and packs the item $i=1$ in $\mathcal{S}_j$.
Follow-up items are greedily packed in $j$ as long as the inventory constraint $\sum_{i\in\mathcal{S}_j}|d_i-\mu_j|\leq B$ is not violated.
If an item $i$ does not fit in $j$, then we open a new bin $j+1$ and proceed in the same fashion.
Determining whether a subset $\mathcal{S}$ of items fits in the same bin requires a scheduling policy $\Delta_{\mathcal{S}}$ that computes the delivery time of the items in $\mathcal{S}$ if exactly those are packed in one bin, and returns the inventory cost incurred by such a delivery.

\begin{algorithm}[h] \nonumber
\caption{Sequential Algorithm}
\begin{algorithmic}[1]
\STATE Sort item s.t.\ $d_1\leq\ldots\leq d_n$.
\STATE Open empty bin $j=1$ with $\mathcal{S}_j=\emptyset$.
\FOR {$i\in\{1,\ldots,n\}$}
\IF {$\Delta_{\mathcal{S}_j\cup\{i\}}> B$}
\STATE Open bin $j=j+1$ and set $\mathcal{S}_j=\emptyset$
\ENDIF
\STATE $\mathcal{S}_j=\mathcal{S}_j\cup\{i\}$
\ENDFOR
\end{algorithmic}
\label{Alg:Sequential}
\end{algorithm}

\subsection{Early Delivery Scheduling}
\label{Section:Early_Deliveries}

Given a subset $\mathcal{S}_j\subseteq\mathcal{I}$ of items packed in the same bin, the early delivery policy schedules the items in $\mathcal{S}_j$ to be delivered at time $\mu_j=\min_{i\in\mathcal{S}_j}\{d_i\}$. 
Theorem~\ref{Thm:ERTF} shows that the sequential algorithm achieves a poor approximation ratio with this scheduling policy.
In particular, we show the existence of a problem instance such that the algorithm's solution requires $k$ bin deliveries, where $k$ can be arbitrarily large, while all items are packed in a single bin in the optimal solution.




\begin{theorem}
\label{Thm:ERTF}
The sequential algorithm with the early delivery scheduling policy is $\omega(1)$-approximate.
\end{theorem}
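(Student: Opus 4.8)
The plan is to exhibit, for every target $k$, an instance on which the early-delivery sequential algorithm opens $k$ bins whereas a single bin is optimal, so that the ratio $k/1$ is unbounded. The guiding intuition is the asymmetry of the early policy: it fixes each bin's delivery at its \emph{earliest} due time and therefore charges the full storage cost $d_i-\min$ for every later item in the bin, whereas an optimal single bin may deliver at the \emph{median} (by Lemma~\ref{Lemma:Bin_Delivery_Time}) and pay only the distances to that central time. I would design the due times so that this gap is amplified: items are clustered around a central time $c$ with \emph{geometrically increasing} spacing as one moves away from $c$, so that the median-based cost stays within a bounded (convergent) sum while the leftmost-anchored cost explodes.

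Concretely, I would build the instance from a spread ``core'' of items around $c$ together with a few far earlier due times acting as decoys. Because greedy processes items in non-decreasing order, each decoy becomes the anchor of a fresh bin and inflates the per-item storage charge $d_i-\mu_j$ for the core items that follow it; greedy thus exhausts its budget $B$ after capturing only a short sub-range and must re-open a bin at every scale. The basic mechanism can be checked on a single decoy plus a uniform core, where greedy already needs three bins while the optimum uses one; stacking the construction over $k$ geometric scales drives the greedy bin count to infinity.

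The two facts to check are: (i) \textbf{optimality of one bin}, i.e. $\sum_i \delta_i(c)=\sum_i|d_i-c|\le B$, which holds because each scale's contribution to the median cost forms a bounded geometric series; and (ii) \textbf{greedy uses $k$ bins}, which I would establish by tracing Algorithm~\ref{Alg:Sequential}: at each scale the current anchor $\mu_j$ lies strictly to the left of the following items, so the accumulated cost $\sum_{i\in\mathcal{S}_j}(d_i-\mu_j)$ reaches $B$ after only a bounded multiplicative portion of the remaining horizon, triggering a new bin.

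The main obstacle is that (i) and (ii) pull in opposite directions: spreading the due times to hurt the early policy simultaneously raises the optimal median cost, while pinning items near $c$ to keep the optimum cheap lets greedy pack many of them per bin. Reconciling the two is exactly where the geometric tuning of inter-scale distances and multiplicities enters---the spacing must grow fast enough that greedy's left-anchored cost hits $B$ once per scale, yet the weights must decay fast enough that the total optimal cost remains a convergent sum bounded by $B$. Getting this balance right, rather than any single estimate, is the crux of the argument.
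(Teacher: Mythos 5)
Your overall plan---a multi-scale instance on which the early-anchored greedy opens one bin per scale while a single bin is optimal---is the same strategy as the paper's, but the specific tuning you propose (self-similar scales with a \emph{fixed} geometric ratio, so that ``each scale's contribution to the median cost forms a bounded geometric series'') cannot work, and this is exactly the crux you leave unresolved. Here is the obstruction, stated for your clustered picture with all weight-carrying due times to the left of the optimal delivery point $c$ (clusters to the right of $c$ cannot help: a bin anchored at $a\geq c$ charges every item at most its optimal cost, so at most one overflow can occur there). Suppose the clusters sit at positions $p_1<p_2<\ldots<p_k<c$, with $w_j$ items at $p_j$, gaps $g_j=p_j-p_{j-1}$ and distances $\delta_j=c-p_j$. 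For greedy to be forced to open a new bin at scale $j$, the items at $p_j$ must overflow the budget relative to the previous anchor, i.e.\ $w_jg_j>B$. But then scale $j$ alone costs the single optimal bin
\begin{equation*}
w_j\delta_j \;=\; w_jg_j\cdot\frac{\delta_j}{g_j}\;>\;B\cdot\frac{\delta_j}{g_j}\;=\;\frac{B}{\rho-1},
\end{equation*}
where the last equality uses fixed-ratio geometry $\delta_j=\delta_{j-1}/\rho$, which gives $g_j=\delta_{j-1}-\delta_j=(\rho-1)\delta_j$. Hence every scale that defeats greedy charges the optimum a \emph{constant} fraction $1/(\rho-1)$ of the budget---not a geometrically decaying amount---and feasibility of the single bin forces $k<\rho-1$. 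A fixed-ratio construction therefore certifies only an $O(1)$ lower bound on the approximation ratio, never $\omega(1)$; your conditions (i) and (ii) are mutually exclusive under the scaling you describe. (The same conclusion holds even if greedy is allowed to skip scales between overflows: writing $\delta_{j_0}>\delta_{j_1}>\ldots>\delta_{j_m}$ for the anchor distances at successive overflows, one gets $\sum_{r}\delta_{j_r}/\delta_{j_{r-1}}\leq 1$, and by AM--GM this forces $\delta_{j_0}/\delta_{j_m}\geq m^m$, so consecutive anchor distances must contract by a factor of order $m$ on average.)

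What is actually needed is \emph{super}-geometric collapse of the gaps (equivalently, super-geometric growth of the multiplicities), or a ratio that grows with the target bin count $k$ (e.g.\ $\rho\approx 4k$). This is precisely what the paper's construction supplies and what ``geometric tuning'' misses: it takes $n_{t}=4^{t-1}\sum_{s<t}n_s$ items at time $\tau_t$ with gaps $\gamma_t=B/n_{t+1}+1$, so cluster $t+1$ overflows greedy's budget ($n_{t+1}\gamma_t>B$) while its distance to the optimal delivery time $\tau_\ell$ is smaller than $\gamma_t$ by a factor of order $4^{t+1}$ (because the later gaps shrink that fast), making its optimal cost roughly $B/4^{t+1}$---a genuinely convergent series, finished off by the assumption $B\geq n^2\ell$ to absorb the integrality terms. (Incidentally, the paper's optimum delivers at the last due time $\tau_\ell$ rather than at a median; by Lemma~\ref{Lemma:Bin_Delivery_Time} this distinction is immaterial.) Your proposal can be repaired along either line, but supplying that growth is the missing idea, not a routine balancing step.
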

\begin{proof}
Consider an instance with $\ell$ distinct item due times $\tau_1,\tau_2,\ldots,\tau_{\ell}$.
Let $n_t$ be the number of items due at time $\tau_t$, for $t\in\{1,2,\ldots,\ell\}$.
We select the $n_t$ values so that: $n_1=1$, $n_2=2$, and $n_t=4^{t-1}(\sum_{s=1}^{t-1}n_s)$, for $t\geq 3$.
Let $\gamma_t=\tau_{t+1}-\tau_t$, for each $t\in\{1,\ldots,\ell-1\}$.
We select the $\tau_t$ values so that $\tau_0=0$ and $\gamma_t=(B/n_{t+1})+1$, for $t\in\{1,\ldots,\ell-1\}$.
That is, we set $\tau_1=0$ and $\tau_t=\sum_{s=1}^{t-1}\gamma_s=B(\sum_{s=2}^t1/n_s)+t$, for $t\in\{2,\ldots,n\}$.
We consider a bin inventory bound $B\geq n^2\ell$.

Initially, Algorithm~\ref{Alg:Sequential} opens bin $j=1$ and packs the item due at $\tau_1=0$ in $j$.
Due to the early delivery policy, the delivery time of this bin is $\mu_1=0$.
Next, Algorithm~\ref{Alg:Sequential} considers one of the two items due at $\tau_2=(B/2+1)$ and packs it in bin $j=1$.
Clearly, the second item due at $\tau_2$ does not fit in bin $j=1$, since such a bin delivery would incur an inventory cost $(B/2+1)+(B/2+1)=B+2>B$.
Hence, Algorithm~\ref{Alg:Sequential} opens a new bin $j=2$ and packs the second item due at $\tau_2$ therein.
That is, $\mu_2=\tau_2$.
In the same spirit, Algorithm~\ref{Alg:Sequential} opens a new bin when considering the last item due at $\tau_t$, for each $t\in\{2,\ldots,\ell\}$.
Once this bin $j=t$ opens, Algorithm~\ref{Alg:Sequential} sets $\mu_j=\tau_t$ greedily packs $n_{t+1}-1$ items due at $\tau_{t+1}$.
Observe that not all $n_{t+1}$ items due at $\tau_{t+1}$ fit in a bin opened at $\tau_t$ since such a packing would incur an inventory cost $n_{t+1}\cdot \gamma_t=n_{t+1}[(B/n_{t+1})+1]=B+n_{t+1}>B$.
Nevertheless, $n_{t+1}-1$ items due at $\tau_{t+1}$ fit in a bin delivered at $\tau_t$ since $(n_{t+1}-1)[(B/n_{t+1})+1]\leq B -(B/n_{t+1}) + n_{t+1}\leq B$, where the last inequality holds because $B\geq n^2\geq n_{t+1}^2$.
Thus, the algorithm uses $\ell$ bins. 



On the other hand, we argue that there exists a feasible solution $\hat{\mathcal{S}}$ which packs all items in a single bin delivered at time $\hat{\mu}_1=\tau_{\ell}$.
The total inventory cost of such a bin delivery can be expressed as follows:

\begin{align}
\sum_{i\in\mathcal{I}}(\hat{\mu}_1-d_i) & = \sum_{t=1}^{\ell-1}\left(n_t\sum_{s=t}^{\ell-1}\gamma_s\right) \label{Eq:Omega_Interval_Structure} \\
& = n_1\cdot\gamma_1+\sum_{t=2}^{\ell-1} \left[\left(\sum_{s=1}^{t}n_s\right)\gamma_t\right] \label{Eq:Omega_Sum_Rearrangement} \\
& = \left(\frac{B}{2}+1\right)+\sum_{t=2}^{\ell-1} \left[\left( \sum_{s=1}^{t}n_s \right)\left(\frac{B}{4^t(\sum_{s=1}^{t}n_s)}+1\right)\right] \label{Eq:Omega_Gamma_Definition} \\
& \leq \frac{B}{2} + \sum_{t=1}^{\ell} \frac{B}{4^t} + \ell\cdot n \label{Eq:Omega_Constructed_Sequence} \\
& \leq \frac{B}{2} + \frac{B}{3} + \frac{B}{n}\leq B \label{Eq:Omega_Geometric_Series}.
\end{align}
Eq.~(\ref{Eq:Omega_Interval_Structure}) holds because the inventory cost by the $n_t$ items due at $\tau_t$ is $n_t(\sum_{s=t}^{\ell-1}\gamma_s)$, for $t\in\{1,\ldots,\ell-1\}$. 
Eq.~(\ref{Eq:Omega_Sum_Rearrangement}) is obtained by rearranging the summations. 
Eq.~(\ref{Eq:Omega_Gamma_Definition}) is based on the fact that $n_1=1$, $n_2=2$, 
i.e.\ $\gamma_1=(B/n_2)+1=(B/2)+1$, 
$\gamma_t=(B/n_{t+1})+1$ and $n_{t+1}=4^t\sum_{s=1}^{t}n_s$, for $t\in\{2,\ldots,\ell-1\}$. 
Eq.~(\ref{Eq:Omega_Constructed_Sequence}) cancels out the terms $\sum_{s=1}^t n_s$
and uses the equality $\sum_{t=1}^{\ell}n_t= n$.
Eq.~(\ref{Eq:Omega_Geometric_Series}) is obtained with the standard geometric series sum $\sum_{t=1}^{\ell}(1/4^t)=(1/4)(1-(1/4^{\ell}))/(1-1/4)\leq1/3$ and the inequality $B\geq n^2\ell$. 
\end{proof}

\subsection{Combined Early and Late Delivery Scheduling}
\label{Section:Early_Late_Deliveries}

Assume that the bins are numbered s.t.\ $\mu_1\leq\ldots\leq\mu_m$ in any feasible solution $\mathcal{S}$.
Given a subset $\mathcal{S}_j\subseteq\mathcal{I}$ of items packed in the same bin, a policy combining early and late deliveries schedules the items in $\mathcal{S}_j$ to be delivered at time $\mu_j=\min_{i\in\mathcal{S}_j}\{d_i\}$, if $j$ is odd, and at time $\mu_j=\max_{i\in\mathcal{S}_j}\{d_i\}$, if $j$ is even. 
Theorem~\ref{Theorem:Early_Late_Deliveries} shows that the sequential algorithm is asymptotically 4-approximate with this scheduling policy.

Consider the solution $\mathcal{S}$ derived by Algorithm~\ref{Alg:Sequential} with the combined early and late delivery scheduling policy and set $k=\lfloor m/2\rfloor$, where $m$ is the number of bin deliveries in $\mathcal{S}$. 
Let $\mathcal{T}=\{1,\ldots,k\}$ be the set of time intervals $[a_1,b_1],\ldots,[a_k,b_k]$, where $a_t$ and $b_t$ correspond to the minimum and maximum due time in an odd and an even bin, respectively, in $\mathcal{S}$, for each $t\in\mathcal{T}$. 
That is, we set $a_t=\min_{i\in\mathcal{S}_{2t-1}}\{d_i\}$ and $b_t=\max_{i\in\mathcal{S}_{2t}}\{d_i\}$, for $t\in\mathcal{T}$.
Clearly, Algorithm~\ref{Alg:Sequential} performs $m\leq 2k+1$ bin deliveries, with exactly one delivery at every $a_t$ and $b_t$ time point. 

Due to the sequential nature of $\mathcal{S}$, it holds that $b_s\leq a_t$, for each $s,t\in\mathcal{T}$ s.t.\ $s<t$.
Observe that the union $\cup_{t=1}^k\{[a_t,b_t]\}$ of the time intervals of interest does not span the entire time horizon $[\min_{i\in\mathcal{I}}\{d_i\},\max_{i\in\mathcal{I}}\{d_i\}]$. 
In particular, the right endpoint $b_t$ of interval $t\in\mathcal{T}\setminus\{k\}$ will not concide with the left endpoint $a_{t+1}$ of the follow-up interval $t+1$. 
However, item due times may only appear inside these intervals, i.e.\ $d_i\in\cup_{t=1}^k\{[a_t,b_t]\}$, for each $i\in\mathcal{I}$.
For each $t\in\mathcal{T}$, let $\mathcal{I}_t=\mathcal{S}_{2t-1}\cup\mathcal{S}_{2t}$ be the subset of items packed in bins $2t-1$ and $2t$, i.e.\ the items with $d_i\in[a_t,b_{t}]$. 

Next, consider an optimal solution $\hat{\mathcal{S}}$ and let $\hat{m}_t$ equal to the number of bin deliveries occuring during $t\in\mathcal{T}$ in $\hat{\mathcal{S}}$.
Then, $\mathcal{T}$ can be partitioned into the subsets $\mathcal{X}=\{t:\hat{m}_t=0,t\in\mathcal{T}\}$ and  $\mathcal{Y}=\{t:\hat{m}_t\geq1,t\in\mathcal{T}\}$ of intervals without and with at least one bin deliveries, respectively.
Suppose that $\hat{v}_i$ is the delivery time of item $i\in\mathcal{I}$ in $\hat{S}$.
The term $\hat{\lambda}_t=\sum_{i\in\mathcal{I}_t}\delta_i(\hat{v}_i)$ corresponds to the total inventory cost incurred in $\hat{\mathcal{S}}$ by the items due during $t\in\mathcal{T}$.
Lemma~\ref{Lemma:Single_Interval_Bounds} lower bounds these terms for $\mathcal{X}$ intervals.

\begin{lemma}
\label{Lemma:Single_Interval_Bounds}
For each interval $t\in\mathcal{X}$, it holds that $\hat{\lambda}_t>B$.
\end{lemma}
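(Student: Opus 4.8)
The plan is to fix the canonical optimal solution guaranteed by Lemma~\ref{Lemma:Bin_Delivery_Time}, so that every delivery time $\hat{v}_i$ coincides with a due time, and then work with a single interval $t\in\mathcal{X}$. Since $\hat{m}_t=0$, no bin of $\hat{\mathcal{S}}$ is delivered inside the closed interval $[a_t,b_t]$, so each item $i\in\mathcal{I}_t$ is delivered either strictly before $a_t$ or strictly after $b_t$. I would partition $\mathcal{I}_t$ accordingly into $\mathcal{L}=\{i\in\mathcal{I}_t:\hat{v}_i<a_t\}$ and $\mathcal{R}=\{i\in\mathcal{I}_t:\hat{v}_i>b_t\}$. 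Because every item of $\mathcal{I}_t$ has due time in $[a_t,b_t]$, we get $\delta_i(\hat{v}_i)=d_i-\hat{v}_i\geq d_i-a_t=\delta_i(a_t)$ for $i\in\mathcal{L}$ and $\delta_i(\hat{v}_i)=\hat{v}_i-d_i\geq b_t-d_i=\delta_i(b_t)$ for $i\in\mathcal{R}$, giving the starting bound $\hat{\lambda}_t\geq\sum_{i\in\mathcal{L}}\delta_i(a_t)+\sum_{i\in\mathcal{R}}\delta_i(b_t)$.

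The key structural fact I would extract from Algorithm~\ref{Alg:Sequential} is the fullness of the odd bin $2t-1$, which is scheduled early at $a_t$. Letting $p$ denote the first item packed in bin $2t$, the greedy rule opened bin $2t$ because $p$ failed to fit in bin $2t-1$, so $\sum_{i\in\mathcal{S}_{2t-1}}\delta_i(a_t)+\delta_p(a_t)>B$. Since $p\in\mathcal{S}_{2t}\subseteq\mathcal{I}_t$ and all terms $\delta_i(a_t)$ are nonnegative, this yields the clean inequality $\sum_{i\in\mathcal{I}_t}\delta_i(a_t)>B$. In the special case $\mathcal{R}=\emptyset$, where $\hat{\mathcal{S}}$ delivers all of $\mathcal{I}_t$ before $a_t$, we immediately obtain $\hat{\lambda}_t\geq\sum_{i\in\mathcal{I}_t}\delta_i(a_t)>B$ and the lemma follows.

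The main obstacle is the presence of late deliveries, i.e.\ $\mathcal{R}\neq\emptyset$: for an item whose due time is near $b_t$, a delivery after $b_t$ is cheaper than $\delta_i(a_t)$, so replacing $\sum_{i\in\mathcal{R}}\delta_i(a_t)$ by $\sum_{i\in\mathcal{R}}\delta_i(b_t)$ can erode the bound below $B$. To recover the lost amount I would exploit that the deliveries lie strictly outside the \emph{closed} interval: since $a_t$ and $b_t$ are themselves due times of items of $\mathcal{I}_t$, any delivery located at $a_t$ or $b_t$ would place $t$ in $\mathcal{Y}$; hence for $t\in\mathcal{X}$ a late item is delivered no earlier than $a_{t+1}$ and an early item no later than $b_{t-1}$. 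Writing $g_t^{+}=a_{t+1}-b_t>0$ and $g_t^{-}=a_t-b_{t-1}>0$ for the gaps to the neighbouring intervals, this sharpens the bounds to $\delta_i(\hat{v}_i)\geq\delta_i(b_t)+g_t^{+}$ on $\mathcal{R}$ and $\delta_i(\hat{v}_i)\geq\delta_i(a_t)+g_t^{-}$ on $\mathcal{L}$, where each gap is in turn lower bounded through the greedy fullness of the even bin $2t$ (which overflowed only when the item due at $a_{t+1}$ was tested, giving $\sum_{i\in\mathcal{S}_{2t}}(a_{t+1}-d_i)>B$) and the symmetric inequality from interval $t-1$. The most delicate step, where I expect the technical heart of the argument to lie, is the accounting that combines $\sum_{i\in\mathcal{I}_t}\delta_i(a_t)>B$ with these gap terms to show that the saving $\sum_{i\in\mathcal{R}}\bigl(\delta_i(a_t)-\delta_i(b_t)\bigr)$ obtained by scheduling $\mathcal{R}$ late is always dominated by the extra cost $|\mathcal{R}|\,g_t^{+}+|\mathcal{L}|\,g_t^{-}$ those items incur for being pushed strictly outside $[a_t,b_t]$, so that no admissible split of $\mathcal{I}_t$ into early and late external deliveries can bring the total down to $B$ or below.
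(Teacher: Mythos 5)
Your setup is sound and matches the paper's up to a point: the partition of $\mathcal{I}_t$ into items delivered before $a_t$ and after $b_t$, the greedy inequality $\sum_{h\in\mathcal{A}_t}\delta_h(a_t)+\delta_p(a_t)>B$, the second greedy inequality $\sum_{h\in\mathcal{B}_t}\delta_h(a_{t+1})>B$, and the easy case $\mathcal{R}=\emptyset$ are all correct. The gap is exactly the step you flag as the technical heart: the claimed domination $\sum_{i\in\mathcal{R}}\bigl(\delta_i(a_t)-\delta_i(b_t)\bigr)\leq|\mathcal{R}|\,g_t^{+}+|\mathcal{L}|\,g_t^{-}$ is false, so your accounting cannot be completed. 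Concretely, take $B=100$; let interval $t-1$ consist of $101$ items due at $-1$ (one even bin delivered at $-1$), let $\mathcal{A}_t$ be a single item due at $0$, let $\mathcal{B}_t$ contain two items due at $101$ and $101.5$, and let the next item be due at $a_{t+1}=152$. The algorithm produces exactly this configuration (the item due at $101$ does not fit with the item due at $0$ since $101>B$; the item due at $152$ does not fit with $\mathcal{B}_t$ since $51+50.5>B$), and $g_t^{-}=1$, $g_t^{+}=50.5$. For the admissible split $\mathcal{L}=\{\text{item due at }0\}$, $\mathcal{R}=\mathcal{B}_t$, the saving is $(101-0.5)+(101.5-0)=202$, while your extra-cost term is $2\cdot 50.5+1\cdot 1=102$. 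Worse, this example kills the whole framing, not just the inequality: the cost of this split is $1+51+50.5=102.5$, strictly below $\sum_{i\in\mathcal{I}_t}\delta_i(a_t)=202.5$, so no bookkeeping can establish $\hat{\lambda}_t\geq\sum_{i\in\mathcal{I}_t}\delta_i(a_t)$; only the weaker conclusion $\hat{\lambda}_t>B$ is true, and it requires a different argument.

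The paper closes the case $\mathcal{R}\neq\emptyset$ with a charging argument rather than per-item gap bookkeeping. Let $i$ be the first item packed in $\mathcal{B}_t$ and split on whether $\delta(a_t,d_i)\leq\delta(d_i,a_{t+1})$. If yes, then \emph{every} item $h\in\mathcal{A}_t\cup\{i\}$ satisfies $\delta_h(\hat{v}_h)\geq\delta_h(a_t)$ regardless of which side it is delivered on, because a late delivery costs at least $a_{t+1}-d_h\geq a_{t+1}-d_i\geq d_i-a_t\geq d_h-a_t$; hence $\hat{\lambda}_t\geq\sum_{h\in\mathcal{A}_t\cup\{i\}}\delta_h(a_t)>B$ by the first greedy inequality. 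If no, then symmetrically every item $h\in\mathcal{B}_t$ satisfies $\delta_h(\hat{v}_h)\geq\delta_h(a_{t+1})$ (an early delivery costs at least $d_h-a_t\geq d_i-a_t>a_{t+1}-d_i\geq a_{t+1}-d_h$), so $\hat{\lambda}_t\geq\sum_{h\in\mathcal{B}_t}\delta_h(a_{t+1})>B$ by the second greedy inequality; this is the branch that handles the counterexample above, giving $101.5>B$. The essential idea you are missing is to charge one well-chosen subset of items ($\mathcal{A}_t\cup\{i\}$ or $\mathcal{B}_t$) against a single reference point ($a_t$ or $a_{t+1}$), exploiting the sorted due times and using $\sum_{h\in\mathcal{B}_t}\delta_h(a_{t+1})>B$ at full strength instead of reducing it to a lower bound on the gap $g_t^{+}$.
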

\begin{proof}
Consider an arbitrary time interval $t\in\mathcal{X}$ such that $\hat{m}_t=0$ in $\hat{\mathcal{S}}$.
Let $\mathcal{A}_t=\mathcal{S}_{2t-1}$ and $\mathcal{B}_t=\mathcal{S}_{2t}$ be the subsets of items assigned to the two bins delivered at $a_t$ and $b_t$, respectively, in the algorithm's solution $\mathcal{S}$. 
Suppose that $i\in\mathcal{B}_t$ is the item that does not fit in the same bin delivery with all items in $\mathcal{A}_t$ and results in opening a new bin for $\mathcal{B}_t$ in $\mathcal{S}$.
Due to the combined early and late delivery scheduling policy, it must be the case that $\sum_{h\in\mathcal{A}_t}\delta_h(a_t)+\delta(a_t,d_i)>B$.
Similarly, because the item due at $a_{t+1}$ does not fit in the same bin delivery with the $\mathcal{B}_t$ items, we have that $\sum_{h\in\mathcal{B}_t}\delta_h(a_{t+1})>B$. 
If $\delta(a_t,d_i)\leq\delta(d_i,a_{t+1})$, then $\hat{\lambda}_t\geq\sum_{i\in\mathcal{A}_t\cup\{i\}}\delta_i(a_t)>B$.
Otherwise, $\hat{\lambda}_t\geq\sum_{h\in\mathcal{B}_t}\delta_h(a_{t+1})>B$.
\end{proof}

\begin{theorem}
\label{Theorem:Early_Late_Deliveries}
The sequential algorithm combining early and late deliveries is asymptotically 
$4$-approximate.
\end{theorem}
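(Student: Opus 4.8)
The plan is to compare the number $m$ of bins opened by Algorithm~\ref{Alg:Sequential} with the number $\hat{m}$ of bins in the optimal solution $\hat{\mathcal{S}}$ by charging the two interval types separately. Since $k=\lfloor m/2\rfloor$ and the algorithm delivers exactly one bin at each of the $2k$ endpoints $a_t,b_t$ plus at most one unpaired final bin, we have $m\le 2k+1$. Hence it suffices to prove $k<2\hat{m}$, and writing $k=|\mathcal{X}|+|\mathcal{Y}|$ reduces the task to bounding $|\mathcal{X}|$ and $|\mathcal{Y}|$ by $\hat{m}$ each.

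First I would bound $|\mathcal{Y}|$. Because $\mathcal{S}$ is sequential, the intervals $[a_t,b_t]$ are pairwise disjoint in time (indeed $b_s<a_{t+1}$ for $s<t$), so each bin delivery of $\hat{\mathcal{S}}$ falls in at most one interval. Consequently $\sum_{t\in\mathcal{T}}\hat{m}_t\le\hat{m}$, and since every $t\in\mathcal{Y}$ contributes $\hat{m}_t\ge1$, this gives $|\mathcal{Y}|\le\sum_{t\in\mathcal{Y}}\hat{m}_t\le\hat{m}$.

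Next I would bound $|\mathcal{X}|$ using Lemma~\ref{Lemma:Single_Interval_Bounds}. The disjointness of the intervals also makes the item sets $\mathcal{I}_t$ pairwise disjoint, so the quantities $\hat{\lambda}_t=\sum_{i\in\mathcal{I}_t}\delta_i(\hat{v}_i)$ are incurred by disjoint groups of items, whence $\sum_{t\in\mathcal{T}}\hat{\lambda}_t$ is at most the total inventory cost paid by $\hat{\mathcal{S}}$. Each of the $\hat{m}$ optimal bins respects the inventory bound, so this total is at most $\hat{m}B$. Combining with Lemma~\ref{Lemma:Single_Interval_Bounds}, which asserts $\hat{\lambda}_t>B$ for every $t\in\mathcal{X}$, I obtain $|\mathcal{X}|\,B<\sum_{t\in\mathcal{X}}\hat{\lambda}_t\le\sum_{t\in\mathcal{T}}\hat{\lambda}_t\le\hat{m}B$, hence $|\mathcal{X}|<\hat{m}$.

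Finally I would combine the estimates: $k=|\mathcal{X}|+|\mathcal{Y}|<2\hat{m}$, whence $m\le 2k+1<4\hat{m}+1$, i.e.\ $m\le 4\hat{m}$, which is the claimed asymptotic $4$-approximation. The main obstacle is the charging argument behind the $\mathcal{X}$ bound: I must verify that mere disjointness of the $\mathcal{I}_t$ (rather than their covering every item) is genuinely enough to aggregate the per-interval lower bounds $\hat{\lambda}_t>B$ against the global budget $\hat{m}B$, since the possibly unpaired final bin leaves some items outside $\bigcup_t\mathcal{I}_t$. The $\mathcal{Y}$ bound and the concluding arithmetic are then routine once interval disjointness is in hand.
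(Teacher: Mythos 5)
Your proof is correct, and it follows the same skeleton as the paper's: the same partition of $\mathcal{T}$ into $\mathcal{X}$ and $\mathcal{Y}$, the same use of Lemma~\ref{Lemma:Single_Interval_Bounds} to charge more than $B$ to every empty interval, and the same concluding arithmetic $m\le 2(x+y)+1$. Where you genuinely diverge is the aggregation step. The paper asserts, from feasibility of $\hat{\mathcal{S}}$ alone, that $\sum_{t\in\mathcal{X}}\hat{\lambda}_t\le (y+1)B$, hence $x\le y+1$ and $m\le 4y+3\le 4\hat{m}+3$; that intermediate inequality is stated without justification, and it is not evident why the cost of the $\mathcal{X}$-items should be bounded by the number of \emph{occupied intervals} (plus one) times $B$ rather than by the number of optimal bins, which can exceed $y$. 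You instead charge against the global budget: since the sets $\mathcal{I}_t$ are pairwise disjoint, $\sum_{t\in\mathcal{X}}\hat{\lambda}_t$ is at most the optimal total inventory cost, which feasibility bounds by $\hat{m}B$; combined with $\hat{\lambda}_t>B$ this yields $x<\hat{m}$, and disjointness of the intervals separately yields $y\le\sum_{t}\hat{m}_t\le\hat{m}$, so $m\le 2(x+y)+1\le 4\hat{m}$. Your route proves the same asymptotic constant, is fully rigorous at the one step where the paper is hand-wavy, and, as you correctly observe, only needs $\bigcup_t\mathcal{I}_t\subseteq\mathcal{I}$ (the charging is a lower-bound direction), so the possibly unpaired final bin when $m$ is odd causes no harm.
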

\begin{proof}
Denote by $x=|\mathcal{X}|$ and $y=|\mathcal{Y}|$ the cardinalities of the sets of intervals $\mathcal{X}$ and $\mathcal{Y}$.
Since $\hat{S}$ is a feasible solution, it must be the case that $\sum_{t\in\mathcal{X}}\hat{\lambda}_t\leq (y+1)B$.
Then, Lemma~\ref{Lemma:Single_Interval_Bounds} gives $x\leq y+1$.
Therefore, we get that $m\leq2(x+y)+1\leq 4y+3$.
Given that $\hat{m}\geq y$, the theorem follows.
\end{proof}

\subsection{Median-Time Scheduling}
\label{Section:Median_Deliveries}

Given a subset $\mathcal{S}_j\subseteq\mathcal{I}$ of items packed in the same bin, the median-time scheduling policy delivers the items in $\mathcal{S}_j$ at time $\mu_j=M(\mathcal{S}_j)$, where $M(\mathcal{S}_j)$ is the median of the set $\{d_i:i\in\mathcal{S}_j\}$.
Recall that, due to Lemma~\ref{Lemma:Bin_Delivery_Time}, there exists an optimal solution scheduling each bin delivery in this way.
The main technical difficulty in analyzing the algorithm's performance is combining bounds on total inventory cost and the number of bin deliveries in the $\hat{\mathcal{S}}$.
Section~\ref{Section:Early_Late_Deliveries} derives an analysis by quantifying the effect of including or not a bin delivery in each interval $t\in\mathcal{T}$.
Here, we obtain tighter bounds by analyzing these effects with pairs of consecutive intervals.
The resulting bounds are summarized in Table~1 
and proven with Lemmas~\ref{Lemma:Consecutive_Interval_Bounds_1}-\ref{Lemma:Consecutive_Interval_Bounds_3}. 
Theorem~\ref{Theorem:Median_Scheduling} shows that the sequential algorithm is asymptotically 8/3-approximate with the median-time scheduling policy.

Similarly to Section~\ref{Section:Early_Late_Deliveries}, given a solution $\mathcal{S}$ with $m$ bins produced by the sequential algorithm under the median-time scheduling policy, denote by $\mathcal{T}=\{1,\ldots,k\}$, where $k=\lfloor m/2\rfloor$, the time-horizon partitioning $[a_1,b_1],\ldots,[a_k,b_k]$ obtained by setting $a_t=\min_{i\in\mathcal{S}_{2t-1}}\{d_i\}$ and $b_t=\max_{i\in\mathcal{S}_{2t}}\{d_i\}$, for each $t\in\mathcal{T}$.
As in the Lemma~\ref{Lemma:Single_Interval_Bounds} proof, we denote by $\mathcal{A}_t$ and $\mathcal{B}_t$ the subsets of items packed in each of the two deliveries during $t\in\mathcal{T}$ in $\mathcal{S}$.
In addition, consider an optimal solution $\hat{\mathcal{S}}$, where $\hat{m}_t$ is equal to the number of bin deliveries occuring during $t\in\mathcal{T}$ in $\hat{\mathcal{S}}$.
Now, we partition $\mathcal{T}$ into the three subsets of intervals $\mathcal{X}=\{t:\hat{m}_t=0,t\in\mathcal{T}\}$ with no bin deliveries, $\mathcal{Y}=\{t:\hat{m}_t=1,t\in\mathcal{T}\}$ with exactly one bin delivery, and $\mathcal{Z}=\{t:\hat{m}_t\geq2,t\in\mathcal{T}\}$ with at least two bin deliveries, in $\hat{\mathcal{S}}$.
Let $\hat{\lambda}_t=\sum_{i\in\mathcal{I}_t}\delta_i(\hat{v}_i)$ be the total load incurred by the $\mathcal{I}_t$ items in  $\hat{\mathcal{S}}$.
Lemma~\ref{Proposition:Algorithm_Interval_Bounds} is a straightforward implication of the median-time scheduling and expresses inequalities causing the algorithm to open bins.
\begin{lemma}
\label{Proposition:Algorithm_Interval_Bounds}
Consider a time interval $t\in\mathcal{T}$ and let $i\in\mathcal{B}_t$ and $g\in\mathcal{A}_{t+1}$ be the items with the minimal due times in $\mathcal{B}_t$ and $\mathcal{A}_{t+1}$, respectively.
Then, it holds that: 
\begin{itemize}
    \item $\sum_{h\in\mathcal{A}_t}\delta_h(\tau)+\delta_i(\tau)>B$, for each $\tau\in[a_t,d_i]$,
    \item $\sum_{h\in\mathcal{B}_t}\delta_h(\tau)+\delta_g(\tau)>B$, for each $\tau\in[d_i,b_t]$. 
\end{itemize}
\end{lemma}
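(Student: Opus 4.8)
The plan is to derive both inequalities directly from the greedy rejection rule of Algorithm~\ref{Alg:Sequential} together with the fact that the median minimizes the inventory cost of a fixed item set. First I would make explicit what triggers the opening of bins $\mathcal{B}_t=\mathcal{S}_{2t}$ and $\mathcal{A}_{t+1}=\mathcal{S}_{2t+1}$. Because items are processed in non-decreasing order of due times and $i$ is the item of minimal due time in $\mathcal{B}_t$, item $i$ is precisely the first item assigned to bin $2t$; hence, at the moment the algorithm examined $i$, the partially filled current bin contained exactly the items of $\mathcal{A}_t=\mathcal{S}_{2t-1}$, and the test $\Delta_{\mathcal{A}_t\cup\{i\}}>B$ failed, forcing a new bin. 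Symmetrically, $g$ is the first item of bin $2t+1$, so the rejection of $g$ from $\mathcal{B}_t$ yields $\Delta_{\mathcal{B}_t\cup\{g\}}>B$.

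Next I would exploit convexity. For a fixed item set, the cost $f(\tau)=\sum_{h\in\mathcal{A}_t}\delta_h(\tau)+\delta_i(\tau)$ is a sum of absolute-value terms, hence piecewise linear and convex in $\tau$, and is minimized at the median of $\mathcal{A}_t\cup\{i\}$, which is exactly the balance characterization underlying part (a) of Lemma~\ref{Lemma:Bin_Delivery_Time}. Since the median-time policy sets the delivery time of a bin to this median, we have $\min_{\tau}f(\tau)=\Delta_{\mathcal{A}_t\cup\{i\}}>B$. Convexity then gives $f(\tau)\geq\min_{\tau}f(\tau)>B$ for every $\tau$, and in particular for each $\tau\in[a_t,d_i]$, establishing the first claimed inequality. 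The second inequality follows by applying the identical argument to $\mathcal{B}_t\cup\{g\}$: the convex function $\sum_{h\in\mathcal{B}_t}\delta_h(\tau)+\delta_g(\tau)$ attains minimum value $\Delta_{\mathcal{B}_t\cup\{g\}}>B$, so it exceeds $B$ at every $\tau$, in particular on $[d_i,b_t]$.

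I do not anticipate a substantial obstacle, since the statement is essentially a restatement of the greedy rejection condition combined with the global optimality of the median delivery time. Note that the inequalities in fact hold for \emph{all} $\tau$; the restriction to the subintervals $[a_t,d_i]$ and $[d_i,b_t]$ merely isolates the ranges needed by the subsequent consecutive-interval analysis. The only point requiring genuine care is the bookkeeping in the first step—verifying that, at the instant $i$ (respectively $g$) is examined, the open bin holds precisely $\mathcal{A}_t$ (respectively $\mathcal{B}_t$)—which I would justify from the sorted processing order together with the definitions of $\mathcal{A}_t$, $\mathcal{B}_t$, $i$, and $g$.
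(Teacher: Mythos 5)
Your proof is correct and is essentially the argument the paper intends: the paper gives no written proof for this lemma, calling it ``a straightforward implication of the median-time scheduling,'' and your two ingredients---the greedy rejection test $\Delta_{\mathcal{A}_t\cup\{i\}}>B$ (resp.\ $\Delta_{\mathcal{B}_t\cup\{g\}}>B$), triggered exactly when the first item of the next bin is examined against the finalized content of the current bin, combined with the fact that the median minimizes $\tau\mapsto\sum_{h}\delta_h(\tau)$ so the rejected cost is the global minimum---are precisely that implication made explicit. Your remark that the inequalities in fact hold for all $\tau$, with the subintervals $[a_t,d_i]$ and $[d_i,b_t]$ serving only the later consecutive-interval analysis, is also accurate.
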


\begin{table}[!t]
\begin{center}
\footnotesize
\begin{tabular}{ |c|c|c| } 
 \hline
 Intervals & $\hat{\lambda}_t+\hat{\lambda}_{t+1}$ & $\hat{m}_t+\hat{m}_{t+1}$ \\ 
 \hline
 $t,t+1\in\mathcal{X}$ & $\geq3B$ & 0 \\ 
 $t\in\mathcal{X}$ (or $\mathcal{Y}$), $t+1\in\mathcal{Y}$ (resp.\ $\mathcal{X}$) & $\geq2B$ & 1 \\ 
 $t\in\mathcal{X}$ (or $\mathcal{Z}$), $t+1\in\mathcal{Z}$ (resp.\ $\mathcal{X}$) & $\geq2B$ & 2 \\ 
 $t,t+1\in\mathcal{Y}$ & $\geq B$ & 2 \\ 
 $t\in\mathcal{Y}$ (or $\mathcal{Z}$), $t+1\in\mathcal{Z}$ (resp.\ $\mathcal{Y}$) & $\geq 0$ & 3 \\ 
 \hline
\end{tabular}
\end{center}
\label{Table:Bounds}
\caption{Bounds on the total inventory cost and number of used bins for a pair of two consecutive intervals in an optimal solution.}
\end{table}

\begin{lemma}
\label{Lemma:Consecutive_Interval_Bounds_1}
For each pair of consecutive intervals $t,t+1\in\mathcal{X}$, it holds that $\hat{\lambda}_t+\hat{\lambda}_{t+1}\geq 3B$.
\end{lemma}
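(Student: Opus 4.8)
The plan is to combine three instances of the overflow inequalities supplied by Lemma~\ref{Proposition:Algorithm_Interval_Bounds} with the fact that, since $t,t+1\in\mathcal{X}$, the optimal solution $\hat{\mathcal{S}}$ places no delivery inside $[a_t,b_t]\cup[a_{t+1},b_{t+1}]$, so every item of $\mathcal{I}_t\cup\mathcal{I}_{t+1}$ is served in $L=(-\infty,a_t)$, the gap $M=(b_t,a_{t+1})$, or $R=(b_{t+1},\infty)$. Applying Lemma~\ref{Proposition:Algorithm_Interval_Bounds} to interval $t$ gives minimal-due items $i_t\in\mathcal{B}_t$ and $g_t\in\mathcal{A}_{t+1}$, and to $t+1$ it gives $i_{t+1}\in\mathcal{B}_{t+1}$. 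I work with the three inventory-cost functions $\phi_t(\tau)=\sum_{h\in\mathcal{A}_t}\delta_h(\tau)+\delta_{i_t}(\tau)$, $\psi_t(\tau)=\sum_{h\in\mathcal{B}_t}\delta_h(\tau)+\delta_{g_t}(\tau)$, and $\phi_{t+1}(\tau)=\sum_{h\in\mathcal{A}_{t+1}}\delta_h(\tau)+\delta_{i_{t+1}}(\tau)$. Each is convex and piecewise-linear, minimized at the median of its item set, and that median lies in the range on which Lemma~\ref{Proposition:Algorithm_Interval_Bounds} asserts the bound; hence each of $\phi_t,\psi_t,\phi_{t+1}$ exceeds $B$ at \emph{every} time $\tau$. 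The fourth half-bin $\mathcal{B}_{t+1}$ needs no bound, since its cost only adds to $\hat{\lambda}_{t+1}$, which is why three inequalities (yielding $3B$) are the natural target.

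The backbone is to charge the optimal cost to the three \emph{disjoint} half-bins $\mathcal{A}_t,\mathcal{B}_t,\mathcal{A}_{t+1}\subseteq\mathcal{I}_t\cup\mathcal{I}_{t+1}$, whose due times lie in $[a_t,d_{i_t}]$, $[d_{i_t},b_t]$, $[a_{t+1},d_{i_{t+1}}]$, respectively. Since $\hat{\lambda}_t+\hat{\lambda}_{t+1}\geq\sum_{h\in\mathcal{A}_t}\delta_h(\hat{v}_h)+\sum_{h\in\mathcal{B}_t}\delta_h(\hat{v}_h)+\sum_{h\in\mathcal{A}_{t+1}}\delta_h(\hat{v}_h)$ after discarding the nonnegative $\mathcal{B}_{t+1}$ terms, it suffices to bound each of the three half-bin sums below by $B$. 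For each half-bin I pick a reference time on the side where its items are delivered and compare termwise: evaluating $\phi_t$ at $d_{i_t}$ kills the $\delta_{i_t}$ term and leaves $\sum_{h\in\mathcal{A}_t}\delta_h(d_{i_t})>B$, while for any $h\in\mathcal{A}_t$ delivered no earlier than $d_{i_t}$ one has $\delta_h(\hat{v}_h)\geq\delta_h(d_{i_t})$; symmetrically, evaluating $\psi_t$ at $a_{t+1}=d_{g_t}$ kills $\delta_{g_t}$ and handles $\mathcal{B}_t$, and evaluating $\phi_{t+1}$ at $d_{i_{t+1}}$ handles $\mathcal{A}_{t+1}$. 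In the principal case where each half-bin is served entirely on the appropriate side of its reference, these termwise comparisons hold and the three bounds add to more than $3B$, giving the claim; zeroing the shared bridging items $i_t,g_t$ at their own due times is what keeps the three charges disjoint.

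The main obstacle is that a half-bin may be \emph{split} between early and late deliveries of $\hat{\mathcal{S}}$—most problematically, part of $\mathcal{A}_t$ served in $L$ while the rest is served to the right, or parts of $\mathcal{B}_t,\mathcal{A}_{t+1}$ served inside the gap $M$—so that no single reference validates the comparison for the whole half-bin. Whenever I must retreat to an inner reference to accommodate gap service (e.g.\ $b_t$ for $\mathcal{B}_t$, $a_{t+1}$ for $\mathcal{A}_{t+1}$), or to an endpoint to accommodate early service (e.g.\ $a_t$ for $\mathcal{A}_t$), Lemma~\ref{Proposition:Algorithm_Interval_Bounds} yields only $B$ minus a deficit equal to the distance between consecutive breakpoints, namely $d_{i_t}-a_t$, $a_{t+1}-b_t$, or $d_{i_{t+1}}-a_{t+1}$. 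I plan to close these deficits by a case analysis on how $\hat{\mathcal{S}}$ uses $L$, $M$, $R$: any item that \emph{traverses} the gap (an $\mathcal{I}_t$ item reaching $R$, or an $\mathcal{I}_{t+1}$ item reaching $L$) pays at least the gap length and absorbs the corresponding deficit, and in the complementary configurations the deficits telescope against one another and against the discarded bridging-item and $\mathcal{B}_{t+1}$ contributions. Reconciling these split sub-cases, in parallel with the combined early/late analysis already carried out in Lemma~\ref{Lemma:Single_Interval_Bounds}, is the crux; the configuration mixing early ($L$) and gap ($M$) service of a single half-bin is the one I expect to require the most care.
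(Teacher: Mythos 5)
There is a genuine gap, and it sits exactly where you set up the problem. You allow the optimal solution $\hat{\mathcal{S}}$ to place deliveries in the gap $M=(b_t,a_{t+1})$, but under that reading the statement is false, so no amount of ``deficit telescoping'' can complete your plan. Concretely, take $B=100$ and items due at $1,102,103,203,204,305,406$. The sequential algorithm with median scheduling produces the bins $\{1\},\{102,103\},\{203,204\},\{305\},\{406\}$, hence $[a_t,b_t]=[1,103]$ and $[a_{t+1},b_{t+1}]=[203,305]$. Five bins is also the optimal count (each of $1$, $305$, $406$ is at distance more than $B$ from every other item, so each must be a singleton, and $102,103,203,204$ need at least two bins), and the five-bin solution delivering $\{1\}$ at $0.5$, $\{102,103\}$ at $103.5$, $\{203,204\}$ at $202.5$, $\{305\}$ at $305.5$, and $\{406\}$ at $406$ is feasible, places no delivery inside $[a_t,b_t]\cup[a_{t+1},b_{t+1}]$ --- so $t,t+1\in\mathcal{X}$ with respect to it --- and yet incurs $\hat{\lambda}_t+\hat{\lambda}_{t+1}=5\ll 3B$. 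The lemma is true only for the canonical optimal solution of Lemma~\ref{Lemma:Bin_Delivery_Time}, whose deliveries occur at item due times; since due times never lie in gaps, such a solution has no deliveries in $M$ at all, and this is precisely what the paper's proof invokes when it asserts that no bin delivery occurs during $[a_t,b_{t+1}]$. Your proposal never performs this canonicalization, and the configurations you flag as requiring ``the most care'' (service in $M$) are exactly the ones where the claim fails.

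Second, even granting the restriction to a canonical optimal, what you have written is a plan rather than a proof: the ``principal case'' you complete (each of $\mathcal{A}_t,\mathcal{B}_t,\mathcal{A}_{t+1}$ served entirely on the favorable side of its reference time) is the easy one, and your stated sufficient condition --- each half-bin sum separately at least $B$ --- is unattainable in the remaining cases. If all of $\mathcal{A}_t$ is served at times $\hat{v}_h\leq a_t$, Lemma~\ref{Proposition:Algorithm_Interval_Bounds} yields only $\sum_{h\in\mathcal{A}_t}\delta_h(\hat{v}_h)>B-\delta_{i_t}(a_t)$, and the deficit $\delta_{i_t}(a_t)=d_{i_t}-a_t$ can be nearly all of $B$ (in the instance above, $\mathcal{A}_t=\{1\}$ contributes essentially nothing). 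Cross-half-bin compensation is therefore unavoidable; the paper achieves it by bounding the whole of $\sum_{h\in\mathcal{I}_t}\delta_h(a_t)$ below by $2B-\delta_g(a_t)$, recovering the missing $\delta_g(a_t)$ from the cost that item $g\in\mathcal{A}_{t+1}$ must itself pay because no delivery exists in $[a_t,b_{t+1}]$, and adding one further bound of $B$ extracted from interval $t+1$ via the minimum of two Lemma~\ref{Proposition:Algorithm_Interval_Bounds} inequalities. That accounting is the entire content of the lemma, and it is exactly the part your proposal defers with ``I plan to close these deficits'' and ``is the crux''; as written, the hard cases are not proved, and the route you propose for them cannot be repaired without the missing reduction to the canonical optimum.
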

\begin{proof}
There are two cases: either $\delta(a_t,a_{t+1})<\delta(a_{t+1},b_{t+1})$, or not.
We only show the lemma for the former case, since the proof of the latter is quite similar (see Figure~\ref{Figure:Consecutive_Intervals}).
Consider the interval $[a_t,a_{t+1}]$.
Let $i\in\mathcal{B}_t$ and $g\in\mathcal{A}_{t+1}$ be the items that do not fit in the same bin delivery with all $\mathcal{A}_t$ and $\mathcal{B}_t$ items, thus lead the algorithm to open a new bin for the $\mathcal{B}_t$ and $\mathcal{A}_{t+1}$ items, respectively.
By Proposition~\ref{Proposition:Algorithm_Interval_Bounds}, we have that $\sum_{h\in\mathcal{A}_t}\delta_h(\sigma)+\delta_i(\sigma)>B$, for each $\sigma\in[a_t,d_i]$, and $\sum_{h\in\mathcal{B}_t}\delta_h(\tau)+\delta_g(\tau)>B$, for each $\tau\in[d_i,a_{t+1}]$.
Since $d_h\geq d_i$, for each $h\in\mathcal{B}_t$, we get that $\sum_{h\in\mathcal{B}_t}\delta_h(a_t)>B+\delta_i(a_t)$.
That is, $\sum_{i\in\mathcal{I}_t}\delta_h(a_t)\geq 2B-\delta_g(a_t)$.
Analogously, let $i'\in\mathcal{B}_{t+1}$ and $g'\in\mathcal{A}_{t+2}$ be the first items assigned to the $(2t+1)$-th and $2(t+1)$-th bin deliveries, respectively, in the solution $\mathcal{S}$ computed by the algorithm.
It must the case that $\sum_{h\in\mathcal{A}_{t+1}}\delta_h(\sigma)+\delta_{i'}(\sigma)>B$, for each $\sigma\in[a_{t+1},d_{i'}]$, and $\sum_{h\in\mathcal{B}_{t+1}}\delta_h(\tau)+\delta_{g'}(\tau)>B$, for each $\tau\in[d_{i'},a_{t+2}]$.
That is, either $\sum_{h\in\mathcal{A}_{t+1}\cup\{i'\}}\delta_h(a_{t+1})>B$, or $\sum_{h\in\mathcal{B}_{t+1}}\delta_h(a_{t+2})>B$.
Now, consider the optimal schedule $\hat{\mathcal{S}}$.
Because there is no bin delivery during $[a_t,b_{t+1}]$, an additional inventory cost of at least $\delta_g(a_t)$ is incurred by the $\mathcal{I}_{t+1}$ items, due to item $g$, in $\hat{\mathcal{S}}$.
Hence,
$\hat{\lambda}_{t}+\hat{\lambda}_{t+1}\geq\sum_{i\in\mathcal{I}_t}\delta_h(a_t)
+\min\left\{\sum_{h\in\mathcal{A}_{t+1}\cup\{i'\}}\delta_h(a_{t+1}),\sum_{h\in\mathcal{B}_{t+1}}\delta_h(a_{t+2})\right\}
+ \delta_g(a_t)
\geq 3B$. 
\end{proof}

\begin{figure*}
\begin{center}
\includegraphics[scale=0.65]{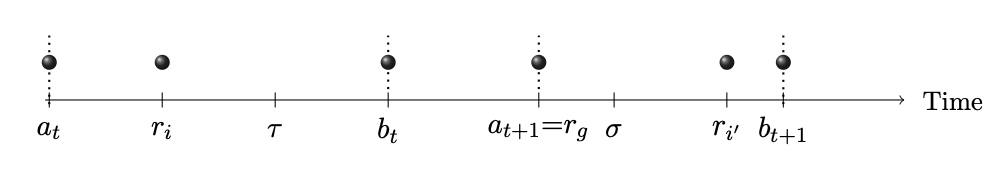}
\end{center}
\vspace*{-0.5cm}
\caption{Two consecutive intervals $t,t+1\in\mathcal{X}$ in the proof of Lemma~\ref{Lemma:Consecutive_Interval_Bounds_1}. Dotted lines correspond to interval boundaries.}
\label{Figure:Consecutive_Intervals}
\end{figure*}


\begin{lemma}
\label{Lemma:Consecutive_Interval_Bounds_2}
For each consecutive intervals $t\in\mathcal{X}$ and $t+1\in\mathcal{Y}$ (or $t\in\mathcal{Y}$ and $t+1\in\mathcal{X}$), it holds that $\hat{\lambda}_t+\hat{\lambda}_{t+1}\geq 2B$.
\end{lemma}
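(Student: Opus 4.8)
The plan is to reuse the framework of Lemma~\ref{Lemma:Consecutive_Interval_Bounds_1}: exploit the two ``full-bin'' inequalities furnished by Proposition~\ref{Proposition:Algorithm_Interval_Bounds} for interval $t$, together with the fact that an $\mathcal{X}$-interval carries no delivery of $\hat{\mathcal{S}}$. Reflecting the time axis about the midpoint of $[a_t,b_{t+1}]$ interchanges the roles of the left and right endpoints and of the two intervals, so it suffices to treat the case $t\in\mathcal{X}$, $t+1\in\mathcal{Y}$; the case $t\in\mathcal{Y}$, $t+1\in\mathcal{X}$ then follows by the mirror-image argument. The governing intuition is that interval $t$, whose items the algorithm had to split across the two full bins $\mathcal{A}_t$ and $\mathcal{B}_t$, already forces $\hat{\lambda}_t$ to be close to $2B$, and the single delivery permitted inside interval $t+1$ cannot simultaneously erase the remaining deficit.

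First I would reproduce, exactly as in Lemma~\ref{Lemma:Consecutive_Interval_Bounds_1}, the bound $\sum_{h\in\mathcal{I}_t}\delta_h(a_t)>2B-\delta_g(a_t)$, obtained by evaluating the inequalities of Proposition~\ref{Proposition:Algorithm_Interval_Bounds} at $a_t$ for the minimal-due-time items $i\in\mathcal{B}_t$ and $g\in\mathcal{A}_{t+1}$. Next I would pin down the delivery geometry of $\hat{\mathcal{S}}$: since $t\in\mathcal{X}$ there is no delivery in $[a_t,b_t]$, and by Lemma~\ref{Lemma:Bin_Delivery_Time} none in the empty gap $(b_t,a_{t+1})$, so the only delivery inside $[a_t,b_{t+1}]$ is the unique one of interval $t+1$, say at $\hat{\mu}\in[a_{t+1},b_{t+1}]$. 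Consequently every item of $\mathcal{I}_t$ is served in $\hat{\mathcal{S}}$ either before $a_t$, at cost at least $\delta_h(a_t)$, or at $\hat{\mu}$ (or further right), at cost at least $\delta_h(a_{t+1})=\delta_h(b_t)+\delta(b_t,a_{t+1})$; likewise every item of $\mathcal{I}_{t+1}$ served from the left is served before $a_t$, hence pays at least $\delta_h(a_t)\ge\delta_g(a_t)$.

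The crux is to convert the $a_t$-reference inequality into a bound on the genuine cost $\hat{\lambda}_t+\hat{\lambda}_{t+1}$ and to recover the $\delta_g(a_t)$ slack, which I would do by a case analysis on the position of $\hat{\mu}$. Writing $R\subseteq\mathcal{I}_t$ for the items served from the right, one has $\hat{\lambda}_t\ge\sum_{h\in\mathcal{I}_t}\delta_h(a_t)-\sum_{h\in R}\big(\delta_h(a_t)-\delta_h(a_{t+1})\big)$, so the optimum can undercut $2B-\delta_g(a_t)$ only by serving the right part of $\mathcal{I}_t$ cheaply from $\hat{\mu}$; this requires $\hat{\mu}$ to sit near $a_{t+1}$, which in turn forces the right part of $\mathcal{I}_{t+1}$, together with $g$, to be served either from $\hat{\mu}$ at large cost or from before $a_t$, so that $\hat{\lambda}_{t+1}$ grows by at least the amount conceded on $\mathcal{I}_t$ plus $\delta_g(a_t)$. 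Making this trade-off quantitative---that whatever is saved on $R$ relative to the $a_t$-reference is at least repaid by $\hat{\lambda}_{t+1}$---is the main obstacle, and it is precisely where the hypothesis $t+1\in\mathcal{Y}$ (one delivery, not zero) is used to settle for the $2B$ bound rather than the $3B$ of Lemma~\ref{Lemma:Consecutive_Interval_Bounds_1}; for this step I expect to also invoke the full-bin inequalities of Proposition~\ref{Proposition:Algorithm_Interval_Bounds} for interval $t+1$, evaluated at $a_{t+1}$ or $b_{t+1}$, to lower bound $\hat{\lambda}_{t+1}$ when $\hat{\mu}$ is pulled toward $a_{t+1}$.
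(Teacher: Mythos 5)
Your setup matches the paper's: the same full-bin inequalities from Proposition~\ref{Proposition:Algorithm_Interval_Bounds}, and the same geometric observation that, since $t\in\mathcal{X}$ and (by Lemma~\ref{Lemma:Bin_Delivery_Time}) no deliveries occur in the gap, every item of $\mathcal{I}_t\cup\mathcal{I}_{t+1}$ is served either from before $a_t$ or from the unique delivery of interval $t+1$ (or further right). But your argument stops exactly where the lemma's content begins: you yourself flag ``making this trade-off quantitative'' as the main obstacle and then only say you ``expect to invoke'' the interval-$(t+1)$ full-bin inequalities. Moreover, the mechanism you propose --- a case analysis on the position of $\hat{\mu}$, plus the claim that whatever the optimum saves on the right-served part $R\subseteq\mathcal{I}_t$ is ``repaid'' by $\hat{\lambda}_{t+1}$ --- is not what makes the bound go through. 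The repayment claim is not a valid standalone principle: when many items of $\mathcal{I}_t$ are served from the right while few items of $\mathcal{I}_{t+1}$ are served from the left, the $2B$ bound does not come from $\hat{\lambda}_{t+1}$ absorbing the concession on $R$; it comes from showing $\hat{\lambda}_t\geq B$ and $\hat{\lambda}_{t+1}\geq B$ separately.

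The missing idea is that the paper replaces your position-of-$\hat{\mu}$ analysis with a cardinality comparison. Let $\mathcal{I}_t^+=\{h\in\mathcal{I}_t:\hat{v}_h\geq a_{t+1}\}$ and $\mathcal{I}_{t+1}^-=\{h\in\mathcal{I}_{t+1}:\hat{v}_h\leq b_t\}$. If $|\mathcal{I}_t^+|<|\mathcal{I}_{t+1}^-|$, a pairing argument (every item of $\mathcal{I}_{t+1}^-$ has due time at least $a_{t+1}\geq d_h$ for any $h\in\mathcal{I}_t$, and is necessarily delivered before $a_t$) gives $\sum_{h\in\mathcal{I}_{t+1}^-}\delta_h(a_t)\geq\sum_{h\in\mathcal{I}_t^+}\delta_h(a_t)+\delta(a_t,a_{t+1})$, so the left-served items alone contribute $\sum_{h\in\mathcal{I}_t}\delta_h(a_t)+\delta(a_t,a_{t+1})\geq 2B$, where the last inequality is exactly your $a_t$-reference bound. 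If instead $|\mathcal{I}_t^+|\geq|\mathcal{I}_{t+1}^-|$, one argues as in Lemma~\ref{Lemma:Single_Interval_Bounds} that $\hat{\lambda}_t\geq B$, and then obtains $\hat{\lambda}_{t+1}\geq B$ via a further cardinality split ($|\mathcal{A}_{t+1}^+|$ versus $|\mathcal{B}_{t+1}^-|$ around the unique delivery time $\tau$) combined with the full-bin inequalities for interval $t+1$ --- this second case is where the hypothesis $t+1\in\mathcal{Y}$ is actually used. Your write-up contains neither the cardinality splits nor the pairing inequalities, so as it stands it is a plan with the decisive step unproven, and the heuristic it rests on would fail in one of the two cases.
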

\begin{proof}
Consider a time interval $[a_t,b_{t+1}]$ obtained by merging $t\in\mathcal{X}$ and $t+1\in\mathcal{Y}$ (the case $t\in\mathcal{Y}$ and $t+1\in\mathcal{X}$ can be handled analogously), i.e.\ there is no bin delivery during $[a_t,b_t]$ and exactly one bin delivery occurs at time $\tau\in[a_{t+1},b_{t+1}]$ in $\hat{\mathcal{S}}$.
Denote by $i,g,i',g'\in\mathcal{I}$ the items with the earliest due times in $\mathcal{B}_t$, $\mathcal{A}_{t+1}$, $\mathcal{B}_{t+1}$, and $\mathcal{A}_{t+2}$, respectively.
Let $\mathcal{I}_t^+=\{h:\hat{v}_h\geq a_{t+1},h\in\mathcal{I}_t\}$ and $\mathcal{I}_{t+1}^-=\{h:\hat{v}_h\leq b_t,h\in\mathcal{I}_{t+1}\}$ be the $\mathcal{I}_t$ items delivered not earlier than $a_{t+1}$ and the $\mathcal{I}_{t+1}$ items delivered not later than $b_t$, respectively, in the optimal solution $\hat{\mathcal{S}}$. 
Then, either $|\mathcal{I}_t^+|<|\mathcal{I}_{t+1}^-|$, or $|\mathcal{I}_t^+|\geq|\mathcal{I}_{t+1}^-|$.

In the former case, we have that $\sum_{h\in\mathcal{I}_t^+}\delta_h(a_{t})+\delta(a_t,a_{t+1})\leq\sum_{h\in\mathcal{I}_{t+1}^-}\delta_h(a_t)$. 
By Proposition~\ref{Proposition:Algorithm_Interval_Bounds}, it holds that $\sum_{h\in\mathcal{A}_t}\delta_h(\sigma)+\delta_i(\sigma)>B$, for each $\sigma\in[a_t,d_i]$, and $\sum_{h\in\mathcal{B}_t}\delta_h(\sigma)+\delta_g(\sigma)>B$, for each $\sigma\in[d_i,a_{t+1}]$.
That is, $\sum_{h\in\mathcal{A}_t\cup\{i\}}\delta_h(a_t)>B$ and, given that $d_h\geq d_i$, for each $h\in\mathcal{B}_t$, we have $\sum_{h\in\mathcal{B}_t}\delta_h(a_t)+\delta(a_t,a_{t+1})\geq B+\delta_i(a_t)$.
So, $\sum_{h\in\mathcal{I}_t}\delta_h(a_t)+\delta(a_t,a_{t+1})\geq2B$.
Hence, the total inventory cost incurred in the algorithm's schedule by items due during $[a_t,b_{t+1}]$ is
$\hat{\lambda}_t +\hat{\lambda}_{t+1}\geq \sum_{h\in\mathcal{I}_t^-}\delta_h(a_t) + \sum_{h\in\mathcal{I}_{t+1}^-}\delta_h(a_{t}) 
\geq \sum_{h\in\mathcal{I}_t^-}\delta_h(a_t) + \sum_{h\in\mathcal{I}_{t}^+}\delta_h(a_t) + \delta(a_t,a_{t+1}) 
\geq 2B$.

Next, consider the case $|\mathcal{I}_t^+|\geq|\mathcal{I}_{t+1}^-|$.
Similarly to the Lemma~\ref{Lemma:Single_Interval_Bounds} proof, we have that $\hat{\lambda}_t\geq B$.
Let $\mathcal{A}_{t+1}^+=\{h:\hat{v}_h>\tau,h\in\mathcal{A}_{t+1}\}$ and $\mathcal{A}_{t+1}^-=\{h:\hat{v}_h\leq\tau,h\in\mathcal{A}_{t+1}\}$ be the $\mathcal{A}_{t+1}$ items delivered after $\tau$ and not later than $\tau$, respectively, in $\hat{\mathcal{S}}$.
Define $\mathcal{B}_{t+1}^-$ and $\mathcal{B}_{t+1}^+$ similarly.
If $|\mathcal{A}_{t+1}^+|\geq|\mathcal{B}_{t+1}^-|$, the fact that $\delta_h(a_{t+2})\geq\delta_{h'}(a_{t+2})$, for each $h\in\mathcal{A}_{t+1}$ and $h'\in\mathcal{B}_{t+1}$, gives that $\sum_{h\in\mathcal{A}_{t+1}^+}\delta_h(a_{t+2})\geq \sum_{h\in\mathcal{B}_{t+1}^-}\delta_h(a_{t+2})$.
But, Proposition~\ref{Proposition:Algorithm_Interval_Bounds} implies that $\sum_{h\in\mathcal{B}_{t+1}}\delta_h(a_{t+2})\geq B$.
Thus, $\lambda_{t+1}\geq\sum_{h\in\mathcal{A}_{t+1}^+}\delta_h(a_{t+2})+\sum_{h\in\mathcal{B}_{t+1}^+}\delta_h(a_{t+2})\geq\sum_{i\in\mathcal{B}_{t+1}}\delta_h(a_{t+2})\geq B$.
Next, suppose that $|\mathcal{A}_{t+1}^+|<|\mathcal{B}_{t+1}^-|$.
If $\mathcal{A}_{t+1}[\tau]$ is the subset of $\mathcal{A}_{t+1}$ items delivered at $\tau$, then we have that $\hat{\lambda}_{t+1}\geq\sum_{h\in\mathcal{I}_t^+}\delta_h(\tau)+\sum_{h\in\mathcal{A}_{t+1}[\tau]}\delta_h(\tau)+\sum_{h\in\mathcal{B}_{t+1}^-}\delta_h(\tau)\geq\sum_{h\in\mathcal{A}_t\cup\{i\}}\delta_h(\tau)\geq B$.
In all cases, we get $\hat{\lambda}_t+\hat{\lambda}_{t+1}>2B$.
\end{proof}


\begin{lemma}
\label{Lemma:Consecutive_Interval_Bounds_3}
For each pair of consecutive intervals $t,t+1\in\mathcal{Y}$, it holds that $\hat{\lambda}_t+\hat{\lambda}_{t+1}\geq B$.
\end{lemma}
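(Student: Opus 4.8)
The plan is to pair the two intervals and argue in the spirit of the proof of Lemma~\ref{Lemma:Consecutive_Interval_Bounds_2}, exploiting that the target here is only $B$ (not $2B$) precisely because each of $t,t+1\in\mathcal{Y}$ already hosts one optimal delivery. Write $\sigma\in[a_t,b_t]$ and $\tau\in[a_{t+1},b_{t+1}]$ for the unique delivery times of $\hat{\mathcal{S}}$ inside the two intervals, so $\sigma\le b_t<a_{t+1}\le\tau$. The driving fact is a global form of Proposition~\ref{Proposition:Algorithm_Interval_Bounds}: since the algorithm refused to merge $i=\min\mathcal{B}_t$ into the bin holding $\mathcal{A}_t$ under the median policy, the convex function $\rho\mapsto\sum_{h\in\mathcal{A}_t\cup\{i\}}\delta_h(\rho)$ attains its minimum (at the median, which lies in $[a_t,d_i]$) above $B$, so $\sum_{h\in\mathcal{A}_t\cup\{i\}}\delta_h(\rho)>B$ for \emph{every} $\rho$, and likewise for $\mathcal{A}_{t+1}\cup\{i'\}$ with $i'=\min\mathcal{B}_{t+1}$. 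Since $\mathcal{A}_t\cup\{i\}\subseteq\mathcal{I}_t$ and $\mathcal{A}_{t+1}\cup\{i'\}\subseteq\mathcal{I}_{t+1}$, delivering either of these sets at any single time costs more than $B$.

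Next I would reduce the cost of items delivered away from their own interval to the interval boundaries. Reusing the cross-over sets of Lemma~\ref{Lemma:Consecutive_Interval_Bounds_2}, let $\mathcal{I}_t^+=\{h\in\mathcal{I}_t:\hat v_h\ge a_{t+1}\}$ and $\mathcal{I}_{t+1}^-=\{h\in\mathcal{I}_{t+1}:\hat v_h\le b_t\}$ be the forward- and backward-crossing items. Because each $\delta_h$ is V-shaped, every $h\in\mathcal{I}_t^+$ obeys $\delta_h(\hat v_h)\ge\delta_h(a_{t+1})$, every $h\in\mathcal{I}_{t+1}^-$ obeys $\delta_h(\hat v_h)\ge\delta_h(b_t)$, and every $\mathcal{I}_t$ item delivered before $a_t$ costs at least $\delta_h(a_t)$. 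I would then split on $|\mathcal{I}_t^+|$ versus $|\mathcal{I}_{t+1}^-|$, as in Lemma~\ref{Lemma:Consecutive_Interval_Bounds_2}. I expect the unbalanced cases to be the easy ones: when few $\mathcal{I}_t$ items cross forward, the overflowing set $\mathcal{A}_t\cup\{i\}$ stays concentrated at $\sigma$ (with escapees pulled to the boundaries of $[a_t,b_t]$), so $\hat\lambda_t$ alone already reaches $B$; the symmetric sub-case handles $\hat\lambda_{t+1}$ through $\mathcal{A}_{t+1}\cup\{i'\}$ and $\tau$. A figure in the style of Figure~\ref{Figure:Consecutive_Intervals} would track the routing in each case.

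The delicate, and I expect main, obstacle is the balanced case, in which the optimum uses \emph{both} internal deliveries $\sigma$ and $\tau$ to split an overflowing single-interval set cheaply. This is exactly the feature absent from Lemma~\ref{Lemma:Consecutive_Interval_Bounds_2}, where $t\in\mathcal{X}$ contributes no internal bin: two delivery points can undercut the single-point minimum, so the overflow inequality cannot be invoked at one time. Here I would lean on the ordering $\sigma\le b_t<a_{t+1}\le\tau$ and a matching argument. Under the balanced hypothesis $|\mathcal{I}_t^+|$ and $|\mathcal{I}_{t+1}^-|$ are comparable, so forward- and backward-crossing items can be paired, and each pair $(h,h')$ pays at least $\delta_h(a_{t+1})+\delta_{h'}(b_t)\ge\delta(b_t,a_{t+1})$, i.e.\ it is charged the traversed gap plus slack; recombining these already-charged crossing costs with the items that remain at $\sigma$ and at $\tau$ should reconstitute a single-delivery evaluation of $\mathcal{A}_t\cup\{i\}$ (or $\mathcal{A}_{t+1}\cup\{i'\}$), which exceeds $B$. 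Making this recombination exact — verifying that the boundary substitutions only decrease the true optimal cost and that no crossing term is double counted — is where the real work of the lemma lies.
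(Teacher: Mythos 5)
Your setup coincides with the paper's: the same crossing sets $\mathcal{I}_t^+$ and $\mathcal{I}_{t+1}^-$, the same V-shape reductions to boundary evaluations, the same top-level cardinality split, and your ``global'' reading of Lemma~\ref{Proposition:Algorithm_Interval_Bounds} (the median minimizes the sum of absolute deviations, so the overflow inequalities for $\mathcal{A}_t\cup\{i\}$ and $\mathcal{B}_t\cup\{g\}$ hold at \emph{every} point) is correct and is indeed what the paper exploits. The first genuine gap is your resolution of the unbalanced case: it is \emph{not} true that $\hat{\lambda}_t\geq B$ when few $\mathcal{I}_t$ items cross forward. The set $\mathcal{A}_t\cup\{i\}$ need not ``stay concentrated at $\sigma$'': the $\mathcal{A}_t$ items, whose due times sit at or near $a_t$, can be delivered in $\hat{\mathcal{S}}$ at a delivery point of interval $t-1$ lying just to the left of $a_t$, at negligible cost, while $\sigma$ sits at $b_t$ serving $\mathcal{B}_t$. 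Pulling these escapees ``to the boundary'' evaluates them at $a_t$, where their cost is essentially zero, so the resulting multi-point sum is not bounded below by the single-point minimum of $\mathcal{A}_t\cup\{i\}$; concretely, with $\mathcal{B}_t=\{i\}\cup\{K\text{ items at }b_t\}$ the only forced contribution to $\hat{\lambda}_t$ is $\delta_i(b_t)$, and the overflow inequality for $\mathcal{B}_t\cup\{g\}$ at $b_t$ only yields $\delta_i(b_t)>B-\delta(b_t,a_{t+1})$, which can fall short of $B$ by nearly the whole gap, with $|\mathcal{I}_t^+|=0$. Backward escape into the preceding interval---a delivery point outside your pair and invisible to your crossing-set bookkeeping---produces exactly the two-point undercutting you attribute only to the balanced case; this is why the bound must combine $\hat{\lambda}_t$ with $\hat{\lambda}_{t+1}$ even in the unbalanced case, and why the paper's argument routes through $\mathcal{I}_{t+1}^-$ there.

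The second gap is that the part you yourself flag as ``where the real work of the lemma lies'' is left undone, and the pairing bound $\delta_h(a_{t+1})+\delta_{h'}(b_t)\geq\delta(b_t,a_{t+1})$ you propose is not the recombination that closes it (it also gives nothing when both crossing sets are empty, a subcase your matching cannot address). The paper's proof uses ingredients your plan does not anticipate: a further split on whether the optimal delivery $\tau$ of interval $t$ lies in $[a_t,d_i]$ or in $[d_i,b_t]$; a partition of $\mathcal{A}_t$ and $\mathcal{B}_t$ by delivery destination ($\leq a_t$, $=\tau$, $\geq a_{t+1}$); the substitution $\sum_{h\in\mathcal{I}_{t+1}^-}\delta_h(\tau)\geq\sum_{h\in\mathcal{B}_t[a_{t+1}]}\delta_h(\tau)+\delta(\tau,a_{t+1})$, performed at $\tau$ itself rather than at the interval boundaries, precisely so that the surviving terms assemble into $\sum_{h\in\mathcal{B}_t}\delta_h(\tau)+\delta(\tau,a_{t+1})\geq B$, i.e.\ the overflow inequality for $\mathcal{B}_t\cup\{g\}$ using $d_g=a_{t+1}$; and, when $\tau\in[d_i,b_t]$, a second cardinality comparison ($|\mathcal{B}_t[a_t]|$ versus $|\mathcal{A}_t[\tau]|+|\mathcal{A}_t[a_{t+1}]|$) that decides whether to rebuild $\mathcal{A}_t\cup\{i\}$ at $a_t$---using backward-escaping $\mathcal{B}_t$ items, whose due times dominate those of $\mathcal{A}_t$, as stand-ins---or $\mathcal{B}_t\cup\{g\}$ at $\tau$. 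In short, your skeleton matches the paper's, but the case analysis is miscalibrated: the case you call easy is not easy for the reason you give, and the case you call hard is deferred rather than solved, so the proposal does not constitute a proof.
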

\begin{proof}
If $\mathcal{I}_t^+=\{h:\hat{v}_h\geq a_{t+1},h\in\mathcal{I}_t\}$ are the $\mathcal{I}_t$ items delivered not earlier than $a_{t+1}$ and $\mathcal{I}_{t+1}^-=\{h:\hat{v}_h\leq b_t,h\in\mathcal{I}_{t+1}\}$ are the $\mathcal{I}_{t+1}$ items delivered not later than $b_t$, in the optimal solution $\hat{\mathcal{S}}$, then either $|\mathcal{I}_t^+|<|\mathcal{I}_{t+1}^-|$, or $|\mathcal{I}_t^+|\geq|\mathcal{I}_{t+1}^-|$.
Here, we only focus on the former case, since the proof for the latter is quite similar.




Suppose that the single delivery in interval $t$ occurs at time $\tau\in[a_t,b_t]$ in $\hat{\mathcal{S}}$.
Similarly to our previous proofs, consider a partitioning of the  $\mathcal{I}_t$ items into the subsets $\mathcal{A}_t$ and $\mathcal{B}_t$ corresponding to the contents of the two bin deliveries taking place in interval $t$ in the algorithm's schedule $\mathcal{S}$, and let $i$ be the first item assigned to the bin containing the $\mathcal{B}_t$ items.
We further split the $\mathcal{A}_t$ (and $\mathcal{B}_t$) sets into the subsets:
\begin{itemize}
    \item $\mathcal{A}_t[a_t]$ (resp.\ $\mathcal{B}_t[a_t]$) with delivery time $\leq a_t$ in $\hat{\mathcal{S}}$, 
    \item $\mathcal{A}_t[\tau]$ (resp.\ $\mathcal{B}_t[\tau]$) with delivery time at $\tau$ in $\hat{\mathcal{S}}$, 
    \item $\mathcal{A}_t[a_{t+1}]$ (resp.\ $\mathcal{B}_t[a_{t+1}]$) with delivery time $\geq a_{t+1}$ in $\hat{\mathcal{S}}$.
\end{itemize}

Distinguish two subcases based on whether $\tau\in[a_{t},d_i]$, or $\tau\in[d_i,b_t]$. 
In the former subcase, since $a_t\leq\tau\leq d_h$ for each $h\in\mathcal{B}_t[a_t]$, it must be the case that $\sum_{h\in\mathcal{B}_t[a_t]}\delta_h(a_t)\geq \sum_{h\in\mathcal{B}_t[a_t]}\delta_h(\tau)$.
Given that $\tau\leq d_h\leq a_{t+1}\leq d_{h'}$ for each $h\in\mathcal{B}_t$ and $h'\in\mathcal{I}_{t+1}^-$, and the fact that $|\mathcal{B}_t[a_{t+1}]|\leq|\mathcal{I}_t^+|<|\mathcal{I}_{t+1}^-|$, we have that $\sum_{h\in\mathcal{I}_{t+1}^-}\delta_h(\tau)\geq\sum_{h\in\mathcal{B}_t[a_{t+1}]}\delta_h(\tau) + \delta(\tau,a_{t+1})$.
Taking also into account Proposition~\ref{Proposition:Algorithm_Interval_Bounds},
\begin{align*}
\hat{\lambda}_t + \hat{\lambda}_{t+1}
& \geq \sum_{h\in\mathcal{B}_t[a_t]}\delta_h(a_t)+\sum_{h\in\mathcal{B}_t[\tau]}\delta_h(\tau) + \sum_{h\in\mathcal{I}_{t+1}^-}\delta_h(\tau) \\
& \geq \sum_{h\in\mathcal{B}_t[a_t]}\delta_h(\tau) + \sum_{h\in\mathcal{B}_t[\tau]}\delta_h(\tau) 
+ \sum_{h\in\mathcal{B}_t[a_{t+1}]}\delta_h(\tau) + \delta(\tau,a_{t+1}) \\
& = \sum_{h\in\mathcal{B}_t}\delta_h(\tau) + \delta(\tau,a_{t+1}) \\
& \geq B,
\end{align*}
In the latter subcase, $\tau\in[d_i,b_t]$.
Since $d_h\leq\tau\leq a_{t+1}$ for each $h\in\mathcal{A}_t$, it must be the case that $\sum_{h\in\mathcal{A}_{t}[a_{t+1}]}\delta_h(a_{t+1})\geq\sum_{h\in\mathcal{A}_{t}[a_{t+1}]}\delta_h(\tau)$.
Similarly to before, $\sum_{h\in\mathcal{I}_{t+1}^-}\delta_h(\tau)\geq\sum_{h\in\mathcal{B}_t[a_{t+1}]}\delta_h(\tau)$.
Thus,
\begin{align*}
\hat{\lambda}_t+\hat{\lambda}_{t+1}
& \geq \sum_{h\in\mathcal{A}_t[a_t]\cup\mathcal{B}_t[a_t]}\delta_h(a_t) 
+ \sum_{h\in\mathcal{A}_t[\tau]\cup\mathcal{B}_t[\tau]}\delta_h(\tau) 
+ \sum_{h\in\mathcal{A}_{t}[a_{t+1}]}\delta_h(a_{t+1})
+ \sum_{h\in\mathcal{I}_{t+1}^-}\delta_h(\tau) \\
& \geq \sum_{h\in\mathcal{A}_t[a_t]\cup\mathcal{B}_t[a_t]}\delta_h(a_t) 
+ \sum_{h\in\mathcal{A}_t[\tau]\cup\mathcal{B}_t[\tau]}\delta_h(\tau) 
\sum_{h\in\mathcal{A}_t[a_{t+1}]\cup\mathcal{B}_t[a_{t+1}]}\delta_h(\tau) + \delta(\tau,a_{t+1})
\end{align*}
If $|\mathcal{B}_t[a_t]|>|\mathcal{A}_t[\tau]|+|\mathcal{A}_t[a_{t+1}]|$, then the fact that $a_t\leq d_h\leq d_{h'}$, for each $h\in\mathcal{A}_t$ and $h'\in\mathcal{B}_t$, implies that
$\sum_{h\in\mathcal{B}_t[a_t]}\delta_h(a_t)\geq\sum_{i\in\mathcal{A}_t[\tau]\cup\mathcal{A}_t[a_{t+1}]}\delta_h(a_t)+\delta_i(a_t)$.
Therefore, 
\begin{align*}
\hat{\lambda}_t+\hat{\lambda}_{t+1}
& \geq\sum_{h\in\mathcal{A}_t[a_t]\cup\mathcal{B}_t[a_t]}\delta_h(a_t)\geq\sum_{h\in\mathcal{A}_t}\delta_h(a_t)+\delta_i(a_t) 
\geq B.
\end{align*}
If $|\mathcal{A}_t[\tau]|+|\mathcal{A}_t[a_{t+1}]|\geq|\mathcal{B}_t[a_t]|$, then we divide $\mathcal{B}_t[a_t]$ into the subsets $\mathcal{B}_t^-[a_{t}]$ and $\mathcal{B}_t^+[a_{t}]$ of items with due times before and not earlier than $\tau$, respectively.
Because $d_h\leq d_h'\leq\tau$, for each $h\in\mathcal{A}_t[\tau]\cup\mathcal{A}_t[a_{t+1}]$ and $h'\in\mathcal{B}_t^-[a_{t}]$, and $|\mathcal{A}_t[\tau]|+|\mathcal{A}_t[a_{t+1}]|\geq|\mathcal{B}_t^-[a_t]|$, we have that $\sum_{h\in\mathcal{A}[\tau]\cup\mathcal{A}[a_{t+1}]}\delta_h(\tau)\geq\sum_{h\in\mathcal{B}_t^-[a_{t}]}\delta_h(\tau)$.
Since $\tau\leq d_h$, for each $h\in\mathcal{B}_t^+[a_t]$, it must be the case that $\sum_{h\in\mathcal{B}_t^+[a_{t}]}\delta_h(a_{t})\geq \sum_{h\in\mathcal{B}_t^+[a_{t+1}]}\delta_h(\tau)$.
Similarly to before, since $|\mathcal{B}_t[a_{t+1}]|\leq|\mathcal{I}_{t+1}^-|$, it must be the case that $\sum_{h\in\mathcal{I}_{t+1}^-}\geq\sum_{h\in\mathcal{B}_t[a_{t+1}]}\delta_h(\tau)+\delta(\tau,a_{t+1})$. 
So, we have that: 
\begin{align*}
\hat{\lambda}_t+\hat{\lambda}_{t+1} 
& \geq \sum_{h\in\mathcal{A}_t[\tau]\cup\mathcal{A}_t[a_{t+1}]}\delta_h(\tau) 
+ \sum_{h\in\mathcal{B}_t^+[a_{t+1}]}\delta_h(a_{t+1}) 
+ \sum_{h\in\mathcal{B}_t[\tau]}\delta_h(\tau)
+ \sum_{h\in\mathcal{I}_{t+1}^{-} }\delta_h(\tau) \\
& \geq \sum_{h\in\mathcal{B}_t}\delta_h(\tau)+\delta(\tau,a_{t+1}) 
\geq B.
\end{align*}
In all cases, we conclude that $\hat{\lambda}_t+\hat{\lambda}_{t+1}\geq B$.
\end{proof}

\begin{theorem}
\label{Theorem:Median_Scheduling}
The sequential algorithm with the median-time scheduling policy is asymptotically $8/3$-approximate.
\end{theorem}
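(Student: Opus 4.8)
The plan is to reduce everything to a single counting inequality relating the number $k=\lfloor m/2\rfloor$ of intervals in $\mathcal{T}$ to the optimal number $\hat{m}$ of bins, using the pairwise bounds of Table~1. Since Algorithm~\ref{Alg:Sequential} performs $m\le 2k+1$ deliveries, it suffices to show $k\le\tfrac43\hat{m}+O(1)$. I would first record two global budget inequalities. Because the intervals $[a_t,b_t]$ are disjoint and every due time lies in one of them, the sets $\mathcal{I}_t$ are disjoint, so $\sum_{t\in\mathcal{T}}\hat\lambda_t$ is a partial sum of the total inventory cost of $\hat{\mathcal{S}}$; feasibility of $\hat{\mathcal{S}}$ then gives $\sum_{t\in\mathcal{T}}\hat\lambda_t\le\hat{m}B$. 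Likewise, deliveries occurring in distinct (hence time-disjoint) intervals are distinct deliveries of $\hat{\mathcal{S}}$, so $\sum_{t\in\mathcal{T}}\hat m_t\le\hat{m}$.

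Next I would group the intervals into consecutive disjoint pairs $(1,2),(3,4),\dots$, leaving at most one interval unpaired when $k$ is odd. Each pair $P=(t,t+1)$ consists of two consecutive intervals, so exactly one row of Table~1 applies and yields $\hat\lambda_t+\hat\lambda_{t+1}\ge c_PB$ together with $\hat m_t+\hat m_{t+1}\ge d_P$, where $(c_P,d_P)$ is the corresponding entry. The single case absent from the visible rows, $t,t+1\in\mathcal{Z}$, is handled trivially since then $\hat m_t+\hat m_{t+1}\ge 4$. The key observation is that every admissible pair type satisfies $c_P+d_P\ge 3$: the entries $(3,0),(2,1),(2,2),(1,2),(0,3)$, and $(0,4)$ all have coordinate sum at least $3$. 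Hence for each pair
\[
\frac{\hat\lambda_t+\hat\lambda_{t+1}}{B}+\bigl(\hat m_t+\hat m_{t+1}\bigr)\;\ge\;c_P+d_P\;\ge\;3 .
\]

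Summing this over all pairs and invoking the two global budget inequalities, the left-hand side is at most $\tfrac1B\sum_{t}\hat\lambda_t+\sum_{t}\hat m_t\le\hat{m}+\hat{m}=2\hat{m}$, while the right-hand side equals $3$ times the number of pairs. Therefore the number of pairs is at most $\tfrac23\hat{m}$, i.e.\ $\lfloor k/2\rfloor\le\tfrac23\hat{m}$, which gives $k\le\tfrac43\hat{m}+1$ and finally $m\le 2k+1\le\tfrac83\hat{m}+3$, establishing the asymptotic $8/3$ ratio. Note that the weights $\tfrac13,\tfrac13$ are forced: the $\mathcal{X}\mathcal{X}$ row demands the inventory weight be at least $\tfrac13$ and the $\mathcal{Y}\mathcal{Z}$ row demands the delivery weight be at least $\tfrac13$, and these two rows together with $\mathcal{X}\mathcal{Y}$ and $\mathcal{Y}\mathcal{Y}$ are simultaneously tight, which is precisely what pins the constant to $8/3$.

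The substantive difficulty is front-loaded into the pairwise bounds of Lemmas~\ref{Lemma:Consecutive_Interval_Bounds_1}--\ref{Lemma:Consecutive_Interval_Bounds_3}, whose amortization charges an uncovered $\mathcal{X}$ interval against its neighbour so that an $\mathcal{X}\mathcal{X}$ pair already costs $3B$; these are the reason a pairwise (rather than per-interval) accounting beats the $4$-approximation of Section~\ref{Section:Early_Late_Deliveries}. Given Table~1, the only remaining obstacle in the theorem is careful bookkeeping: verifying that the fixed pairing covers every interval with a legitimate consecutive-pair bound, that the $\mathcal{Z}\mathcal{Z}$ case still meets $c_P+d_P\ge3$ through its delivery count alone, and that the unpaired interval (for odd $k$) and the $m\le 2k+1$ rounding contribute only the additive constant absorbed by the word \emph{asymptotically}.
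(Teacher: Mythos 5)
Your proposal is correct and takes essentially the same route as the paper's own proof: the paper also merges every odd interval with its even successor, checks that each merged pair of types $\mathcal{X}\mathcal{X},\mathcal{X}\mathcal{Y},\mathcal{X}\mathcal{Z},\mathcal{Y}\mathcal{Y},\mathcal{Y}\mathcal{Z},\mathcal{Z}\mathcal{Z}$ satisfies $\hat{\lambda}_t/B+\hat{m}_t\geq 3$, and combines this with the same two budget inequalities $\frac{1}{B}\sum_t\hat{\lambda}_t\leq\hat{m}$ and $\sum_t\hat{m}_t\leq\hat{m}$ to conclude $m\leq(8/3)\hat{m}+O(1)$. The only differences are bookkeeping details (the paper assumes w.l.o.g.\ an even number of intervals and counts pair types via cardinalities $xx,xy,\ldots$, obtaining the additive constant $1$ rather than your $3$), which do not affect the asymptotic ratio.
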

\begin{proof}
Consider the algorithm's solution $\mathcal{S}$ and an optimal solution $\hat{\mathcal{S}}$ with $m$ and $\hat{m}$ bin deliveries, respectively.  
To prove the theorem, we refine $\mathcal{T}$ by merging intervals.
W.l.o.g.\ we assume that $\mathcal{T}$ contains an even number of intervals.
In the new time horizon partitioning, which we denote by $\mathcal{T}'$, every odd interval $t\in\mathcal{T}$ is merged with the follow-up (even) interval $t+1\in\mathcal{T}$.
Let $\mathcal{X}\mathcal{X}$ be the subset of $\mathcal{T}'$ intervals obtained by merging two intervals $t,t+1\in\mathcal{X}$ of the original partitioning $\mathcal{T}$. 
Define $\mathcal{X}\mathcal{Y}$, $\mathcal{X}\mathcal{Z}$, $\mathcal{Y}\mathcal{Y}$, $\mathcal{Y}\mathcal{Z}$, and $\mathcal{Z}\mathcal{Z}$ analogously, by considering pairs of intervals in $\mathcal{X}$, $\mathcal{Y}$, and $\mathcal{Z}$. 
Let $xx$, $xy$, $xz$, $yy$, $yz$, and $zz$ be the cardinalities of these sets.
By definition, we have that $x=2xx+xy+xz$, $y=xy+2yy+yz$, and $z=xz+yz+2zz$.
Hence, $m \leq 2x+2y+2z+1 = 4(xx + xy + xz + yy + yz + zz)+1$.

Let $\hat{m}_t$ and $\hat{\lambda}_t$ be the number of bin deliveries and the total inventory cost incurred by the items with due times during an interval $t\in\mathcal{T}'$ of the refined partitioning in $\hat{\mathcal{S}}$.
For each $t\in\mathcal{X}\mathcal{X}$ interval, we have that $\hat{\lambda}_t\geq 3B$ and $\hat{m}_t=0$.
For each $t\in\mathcal{X}\mathcal{Y}$ interval, it holds that $\hat{\lambda}_t\geq 2B$ and $\hat{m}_t=1$.
For each $t\in\mathcal{X}\mathcal{Z}$ interval, $\hat{\lambda}_t\geq B$ and $\hat{m}_t\geq 2$. 
For each $t\in\mathcal{Y}\mathcal{Y}$ interval, it must be the case that $\hat{\lambda}_t\geq B$ and $\hat{m}_t=2$.
For each $t\in\mathcal{Y}\mathcal{Z}$ interval, $\hat{\lambda}_t\geq 0$ and $\hat{m}_t\geq 3$. 
For each $t\in\mathcal{Z}\mathcal{Z}$ interval, $\hat{\lambda}_t\geq 0$ and $\hat{m}_t\geq 3$. 
By considering the union 
of these intervals, observe that $\frac{1}{B}\sum_{t\in\mathcal{T}'}\hat{\lambda}_t+\sum_{t\in\mathcal{T}'}\hat{m}_t\geq 3(xx+xy+xz+yy+yz+zz)$.
Since the optimal solution $\hat{\mathcal{S}}$ is feasible, $\frac{1}{B}\sum_{t\in\mathcal{T}'}\hat{\lambda}_t \leq \hat{m}$.
Because $\hat{m}\geq \sum_{t\in\mathcal{T'}}\hat{m}_t$, we conclude that $m\leq(8/3)\hat{m}+1$.
\end{proof}

\paragraph{Example}
We complement our analysis with a 2 lower bound on the approximation ratio of the algorithm and leave closing the gap between this and the 8/3 upper bound as an open question.
We construct problem instance with two types of items $\mathcal{A}$ and $\mathcal{B}$, i.e.\ 
$\mathcal{I}=\mathcal{A}\cup\mathcal{B}$. 
Set $\mathcal{A}$ consists of $k$ subsets $\mathcal{S}_1^{\mathcal{A}},\ldots,\mathcal{S}_k^{\mathcal{A}}$ of items. 
Subset $\mathcal{S}_j^{\mathcal{A}}$ contains $n_j^{\mathcal{A}}=\ell$ items s.t.\ the $i$-th item of $\mathcal{S}_j^{\mathcal{A}}$ is due at time $\tau_{i,j}^{\mathcal{A}}=(j-1)\ell+i-1$, for $i\in\{1,\ldots,\ell\}$ and $j\in\{1,\ldots,k\}$.
Set $\mathcal{B}$ consists $k$ subsets $\mathcal{S}_1^{\mathcal{B}},\ldots,\mathcal{S}_k^{\mathcal{B}}$ of items, where $\mathcal{S}_j^{\mathcal{B}}$ contains $n_j^{\mathcal{B}}=(2B+1)$ items all due at time $\tau_j^{\mathcal{B}}=\ell k+j$.  
We select  $k=\sqrt{\lambda-1}$, $\ell=2\lambda$, $B=\lambda^2$, where $\lambda\geq1$ is a constant s.t.\ $\sqrt{\lambda-1}$ is an integer.
In the solution produced by the algorithm, each of the subsets $\mathcal{S}_j^{\mathcal{A}}$ and $\mathcal{S}_j^{\mathcal{B}}$ corresponds to the content of a single bin delivery, for $j\in\{1,\ldots,k\}$.
The median of the items in $\mathcal{S}_j^{\mathcal{A}}=\{(j-1)\ell,\ldots,j\ell-1\}$ is $\mu_j^{\mathcal{A}}=(j-1)\ell+\lambda-1$.
Given a $j\in\{1,\ldots,k\}$, the total cost incurred by the items in $\mathcal{S}_j^{\mathcal{A}}$ is $2(\sum_{h=1}^{\lambda-1}h)+\lambda=\lambda^2\leq B$.
Observe that the item with the earliest due time after the $\ell$-th $\mathcal{S}_j^{\mathcal{A}}$ item does not fit in the same bin delivery with all $\mathcal{S}_j^{\mathcal{A}}$ items.
Therefore, the algorithm produces a solution with $\ell=2k$ bin deliveries.
For a given $i\in\{1,\ldots,k\}$, let $\hat{\mathcal{S}}_i$ be the set of items containing the $i$-th item of $\mathcal{S}_j^{\mathcal{A}}$, for each $j\in\{1,\ldots,k\}$, and all items of $\mathcal{S}_i^{\mathcal{B}}$. 
In an optimal solution, the items in $\hat{\mathcal{S}}_i$ are simultaneously delivered at $\tau_i^{\mathcal{B}}=\ell k+i$, for each $i\in\{1,\ldots,k\}$.
With this bin delivery, the $i$-th item of the $t$-th $\mathcal{A}$ subset incurs a delivery cost $k\ell+i-(t-1)\ell-i=(k-t+1)$.
That is, the total inventory cost incurred by $\hat{\mathcal{S}}_i$ is $\sum_{h=1}^k(k-h+1)\ell=\sum_{h=1}^kh\ell=\frac{(k^2+k)\ell}{2}=\frac{((\lambda-1)^2+\lambda)(2\lambda)}{2}\leq B$.
Therefore, the solution is feasible and uses $k$ bins.

\section{Decoupling Algorithm}
\label{Section:Decoupling_Algorithm}

Next, we present an algorithm that may produce non-sequential solutions by decoupling delivery scheduling decisions from assignments of items to bins.
Section~\ref{Section:Decoupling_Description} describes the algorithm,
Section~\ref{Section:Decoupling_Dynamic_Programming} instantiates a dynamic programming component and Section~\ref{Section:Decoupling_Approximation_Ratio} derives the approximation ratio of the algorithm.
Section~\ref{Section:Decoupling_Refinement} proposes a way of refining the algorithm's schedule that allows computing asymptotically tight 2-approximate solutions.



\subsection{Algorithm Description.}
\label{Section:Decoupling_Description}

The proposed algorithm consists of $n$ iterations.
The $k$-th iteration produces a solution $\mathcal{S}^{(k)}$ with $k$ distinct delivery times, if such a feasible solution exists, for $k\in\{1,\ldots,n\}$.
For simplicity of presentation, consider the solution $\mathcal{S}$ computed for a fixed $k$ value.
Let $\tau_1<\ldots<\tau_k$ be the sequence of distinct delivery times and denote by $\mathcal{V}$ their indices in $\mathcal{S}$.
Note that multiple bin deliveries may occur at $\tau_t$, for each $t\in\{1,\ldots,k\}$.
Set $\Delta(\mathcal{V})=\sum_{i\in\mathcal{I}}\delta_i(\mathcal{V})$, where $\delta_i(\mathcal{V})=\min_{t\in\mathcal{V}}\{|d_i-\tau_t|\}$, for each $i\in\mathcal{I}$.
To obtain $\mathcal{V}$, the algorithm computes $k$ times such that the total length $\Delta(\mathcal{V})$ of the intervals between each item due time and every delivery time is minimized, and $\delta_i(\mathcal{V})\leq B$, for each $i\in\mathcal{I}$.
This computation, denoted by $\text{ET}(\mathcal{I},B,k)$, is a variant of classic parallel machine earliness-tardiness scheduling problems with an additional upper bound on the individual cost incurred by each job.
We show that $\text{ET}(\mathcal{I},B,k)$ can be computed in polynomial-time using dynamic programming.
Next, each item $i\in\mathcal{I}$ is assigned at its closest time $\arg\min_{t\in\mathcal{V}}\{d_i-\tau_t\}$ for delivery.
Let $\mathcal{D}_t\subseteq\mathcal{I}$ be the subset of items scheduled for delivery at $t\in\mathcal{V}$. 
Performing a single bin delivery at each $t\in\mathcal{V}$ might not result in a feasible solution, since it is possible that $\sum_{i\in\mathcal{D}_t}\delta_i(\mathcal{V})>B$.  
Hence, we use a classic bin packing algorithm $\text{BP}(\mathcal{D}_t,B)$, for each $t\in\mathcal{V}$, to pack the items in $\mathcal{D}_t$ into bins of capacity $B$, with each item $i\in\mathcal{D}_t$ having size $\delta_i(\mathcal{V})$.
A pseudocode for this procedure is given in Algorithm~\ref{Alg:Decomposition}.


\begin{algorithm}[h] \nonumber
\caption{Decoupling Algorithm}
\begin{algorithmic}[1]
\FOR {$k\in\{1,\ldots,n\}$}
\STATE $\mathcal{V}^{(k)}=\text{ET}(\mathcal{I},B,k)$
\STATE Set $\mathcal{D}_t^{(k)}=\emptyset$, for each $t\in\mathcal{V}^{(k)}$.
\FOR {$i\in\mathcal{I}$}
\STATE $\delta_i(\mathcal{V}^{(k)})=\arg\min_{t\in\mathcal{V}^{(k)}}\{|d_i-\tau_t|\}$
\STATE $\mathcal{D}_t^{(k)}=\mathcal{D}_t^{(k)}\cup\{i\}$
\ENDFOR
\STATE $\mathcal{S}^{(k)}=\emptyset$
\FOR {$t\in\mathcal{V}^{(k)}$}
\STATE $\mathcal{S}^{(k)}=\mathcal{S}^{(k)}\cup\text{BP}(\mathcal{D}_t^{(k)},B)$  
\ENDFOR
\ENDFOR
\STATE Return $\arg\min_{k\in\{1,\ldots,n\}}\{|\mathcal{S}^{(k)}|\}$
\end{algorithmic}
\label{Alg:Decomposition}
\end{algorithm}


\subsection{Delivery Time Computation}
\label{Section:Decoupling_Dynamic_Programming}

Given a container delivery scheduling instance $\langle\mathcal{I},B\rangle$, the algorithm's component $\text{ET}(\mathcal{I},B,k)$ seeks a set $\mathcal{V}$ of $k$ delivery times such that (a) the total inventory cost $\sum_{i\in\mathcal{I}}\delta_i(\mathcal{V})$ is minimized and (b) $\delta_i(\mathcal{V})\leq B$, for each $i\in\mathcal{I}$. 
We show that this computation can be performed in $O(n^3)$ time using dynamic programming.
An optimal solution $\mathcal{V}$ to $\text{ET}(\mathcal{I},B,k)$ defines a partitioning of $\mathcal{I}$ into $k$ subsets $\mathcal{D}_1,\ldots,\mathcal{D}_k$ such that $\mathcal{D}_t=\{i:\delta_i(\mathcal{V})=|d_i-\tau_t|,i\in\mathcal{I}\}$ contains all the items that closer to $\tau_t$ than any other time in $\mathcal{V}$, breaking ties arbitrarily.
Each subset $\mathcal{D}_t$ is associated with a time interval $[a_t,b_t]$, where $a_t=\min_{i\in\mathcal{D}_t}\{d_i\}$ and $b_t=\max_{i\in\mathcal{D}_t}\{d_i\}$.
By definition, these intervals are of sequential nature, i.e.\ $b_s\leq a_t$, for each pair of intervals $s,t\in\mathcal{V}$ such that $s<t$.
Because of the requirement $\delta_i(\mathcal{V})\leq B$, an arbitrary subset $\mathcal{V}$ of times may not be feasible for $\text{ET}(\mathcal{I},B,k)$.
The proposed algorithm computes feasible sequential solution as follows.

Number the items in $\mathcal{I}$ so that $d_1\leq\ldots\leq d_n$, breaking ties arbitrarily.
Consider a arbitrary solution $\mathcal{V}$ for $\text{ET}(\mathcal{I},B,k)$ and a subset $\{h,h+1,\ldots,i\}\subseteq\mathcal{I}$ of items.
Denote by $Q(h,i)$ the inventory cost incurred by the items in $\{h,h+1,\ldots,i\}$, if exactly those appear in a subset $\mathcal{D}_t$ of items assigned to a delivery time $t\in\mathcal{V}$.
Lemma~\ref{Lemma:Dynamic_Programming_Constants} shows how to compute $Q(h,i)$ for each $h,i\in\mathcal{I}$ such that $h\leq i$.
We make the convention that $Q(h,i)=+\infty$, if $\{h,h+1,\ldots,i\}$ cannot be feasibly assigned to the same delivery time.

\begin{lemma}
\label{Lemma:Dynamic_Programming_Constants}
Consider an arbitrary pair of items $h,i\in\mathcal{I}$ such that $h\leq i$.
If $d_i-d_h\leq2B$, then it holds that $Q(h,i)=\sum_{i'=h}^i|d_{i'}-\lambda(h,i)|$, where

\begin{align*}
\lambda(h,i) =
\begin{cases}
d_i-B, \quad \text{if $\med(h,i)\leq d_i-B$} \\
\med(h,i), \quad \text{if $d_i-B<\med(h,i)<d_h+B$} \\
d_h+B, \quad \text{if $\med(h,i)\geq d_h+B$}, \\
\end{cases} 
\end{align*}
where $\med(h,i)$ is the median of the set $\{d_h,d_{h+1},\ldots,d_i\}$. 
If $d_i-d_h>2B$, then $Q(h,i)=+\infty$.
\end{lemma}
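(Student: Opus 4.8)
The plan is to recognize $Q(h,i)$ as the value of a one-dimensional constrained convex minimization problem and to solve it by projecting the unconstrained optimizer onto the feasible set. Writing $f(\tau)=\sum_{i'=h}^{i}|d_{i'}-\tau|$ for the total inventory cost of delivering items $\{h,\dots,i\}$ together at a single time $\tau$, the quantity $Q(h,i)$ is the minimum of $f(\tau)$ subject to the per-item feasibility constraints $|d_{i'}-\tau|\le B$ for every $i'\in\{h,\dots,i\}$, since in the ET subproblem a block's delivery time is chosen to minimize its cost while keeping every assigned item feasible.

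First I would pin down the feasible set of delivery times. Because $d_h\le d_{i'}\le d_i$ for every $i'$ in the block, the constraint $|d_{i'}-\tau|\le B$ holds for all $i'$ if and only if $\tau$ lies in the intersection $\bigcap_{i'=h}^{i}[d_{i'}-B,\,d_{i'}+B]=[d_i-B,\,d_h+B]$, where only the extreme due times $d_h$ and $d_i$ are binding. This interval is nonempty precisely when $d_i-B\le d_h+B$, i.e.\ $d_i-d_h\le 2B$; when $d_i-d_h>2B$ no single delivery time serves the whole block feasibly, giving $Q(h,i)=+\infty$, as claimed.

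Next I would invoke the standard fact that $f$ is a convex, piecewise-linear function of $\tau$ whose unconstrained minimizers are exactly the medians of $\{d_h,\dots,d_i\}$, so in particular $f$ is minimized at $\med(h,i)$. By convexity, the minimizer of $f$ over $[d_i-B,\,d_h+B]$ is the projection (clamping) of the unconstrained minimizer onto that interval: if $\med(h,i)\in[d_i-B,\,d_h+B]$ then $f$ attains its global minimum there and $\lambda(h,i)=\med(h,i)$; if $\med(h,i)<d_i-B$ then $f$ is nondecreasing on the feasible interval, so the constrained optimum is the left endpoint $\lambda(h,i)=d_i-B$; and symmetrically, if $\med(h,i)>d_h+B$ then $f$ is nonincreasing on the interval and the optimum is the right endpoint $\lambda(h,i)=d_h+B$. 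This reproduces exactly the three-case definition of $\lambda(h,i)$, and substituting yields $Q(h,i)=f(\lambda(h,i))=\sum_{i'=h}^{i}|d_{i'}-\lambda(h,i)|$.

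I expect the main obstacle to be bookkeeping rather than conceptual. The monotonicity claims in the projection step follow immediately from the observation that a convex function is monotone on any interval lying entirely to one side of its minimizer, so the real care is in identifying which endpoint is selected from the position of $\med(h,i)$ relative to $d_i-B$ and $d_h+B$, confirming via the ordering $d_1\le\dots\le d_n$ that $d_h$ and $d_i$ are the binding due times, and checking the boundary cases where $\med(h,i)$ coincides with an endpoint, in which the two relevant formulas agree.
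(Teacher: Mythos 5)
Your proposal is correct, and its skeleton matches the paper's proof: both establish that the set of delivery times keeping every item's cost within $B$ is the interval $[d_i-B,\,d_h+B]$, which is empty exactly when $d_i-d_h>2B$ (giving $Q(h,i)=+\infty$), and both then identify the optimal delivery time as the median clamped to that interval. Where you differ is in how optimality of the clamped median is justified. The paper re-invokes the exchange argument of Lemma~\ref{Lemma:Bin_Delivery_Time}: the interior case is cited as a direct application of that lemma, and the two boundary cases are dispatched as ``similar exchange arguments.'' You instead use the standard convex-analysis facts that $f(\tau)=\sum_{i'=h}^{i}|d_{i'}-\tau|$ is convex and piecewise linear with the medians as its unconstrained minimizers, and that a convex function over an interval is minimized at the projection of its unconstrained minimizer onto that interval. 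Your route is self-contained and makes the monotonicity of $f$ on the feasible interval explicit, which is precisely what the boundary cases require; notably, the paper's sentence for the case $\med(h,i)\leq d_i-B$ compares $d_i-B$ against times $\tau<d_i-B$, which are infeasible, whereas the comparison actually needed is against feasible $\tau>d_i-B$ — your convexity argument supplies exactly this. What the paper's route buys in exchange is uniformity: the whole dynamic-programming section reuses the single exchange-argument toolkit already developed for Lemma~\ref{Lemma:Bin_Delivery_Time}, rather than introducing a separate optimization-theoretic justification.
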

\begin{proof}
Initially, observe that $Q(h,i)=+\infty$, for each pair of items $h,i\in\mathcal{I}$ such that $h<i$ and $d_i-d_h>2B$. 
Indeed, if $d_i-d_h>2B$, then $(d_i-\tau)+(\tau-d_h)>2B$, i.e.\ either $d_i-\tau>B$, or $\tau-d_h>B$, for each $\tau\in[d_h,d_i]$, which constradicts that the items $\{h,h+1,\ldots,i\}$ can assigned to the same bin in a feasible solution for $\text{ET}(\mathcal{I},B,k)$.
On the other hand, if $d_i-d_h\leq 2B$, set $\tau=(d_h+d_i)/2$.
For each $i'\in\{h,h+1,\ldots,i\}$, we distinguish two cases: either $d_{i'}\leq \tau$, or $d_{i'}>\tau$.
In the former case, $\tau-d_{i'}\leq(d_h+d_i)/2-d_h=(d_i-d_h)/2\leq B$.
In the latter case, $d_{i'}-\tau\leq d_i-(d_h+d_i/2)=(d_i-d_h)/2\leq B$.
In both cases, we conclude that there exists a time $\tau$ such that $|d_{i'}-\tau|\leq B$, for each $i'\in\{h,h+1,\ldots,i\}$.
Next, assume that $d_i-d_h\leq 2B$ and suppose that $\mathcal{D}_t=\{h,h+1,\ldots,i\}$, for some $t\in\mathcal{V}$ in a feasible solution $\mathcal{V}$ for $\text{ET}(\mathcal{I},B,k)$.
Clearly, $d_i-B\leq t\leq d_h+B$.
The corresponding $Q(i,h)$ values for this case can proved similarly to Lemma~\ref{Lemma:Bin_Delivery_Time}. 
If $d_i-B<\med(h,i)<d_h+B$, then the fact that $Q(h,i)=\sum_{i'=h}^i|d_{i'}-\lambda(h,i)|$ is a straightforward application of Lemma~\ref{Lemma:Bin_Delivery_Time}.
If $\med(h,i)\leq d_i-B$, a similar exchange argument to the one in Lemma~\ref{Lemma:Bin_Delivery_Time} implies that $\sum_{i'\in\{h,\ldots,i\}}|d_{i'}-(d_i-B)|\leq\sum_{i'\in\{h,\ldots,i\}}|d_{i'}-\tau|$, for each $\tau<d_i-B$.
The case $\med(h,i)\geq d_j+B$ can be handled analogously.
\end{proof}

To recursively decompose $\text{ET}(\mathcal{I},B,k)$, let $P(i,j)$ be cost of the subproblem of computing $j$ delivery times such that the  total distance between the due time of each item in $\{1,\ldots,i\}$ and its closest delivery time is minimized.
That is, 

\begin{align*}
P(i,j)={\arg\min}_{\mathcal{V}\subseteq\{d_1,\ldots,d_i\}}
\Bigg\{\sum_{h=1}^i\delta_h(\mathcal{V}): |\mathcal{V}|\leq j,\delta_h(\mathcal{V})\leq B, \forall h\in\{1,\ldots,i\}\Bigg\}
\end{align*}
If the subproblem is infeasible, then $P(i,j)=+\infty$.
Clearly, $P(n,k)$ is equivalent to $\text{ET}(\mathcal{I},B,k)$.
By definition, $P(i,1)=Q(1,i)$, for each $i\in\mathcal{I}$.
Also, $P(i,j)=0$, for each $i,j\in\{1,\ldots,n\}$ such that $i\leq j$.
Lemma~\ref{Lemma:Dynamic_Programming_Recurrence} specifies the recurrence relation of the dynamic programming algorithm.

\begin{lemma}
\label{Lemma:Dynamic_Programming_Recurrence}
For each $i,j\in\{2,\ldots,n\}$ s.t.\ $i>j$, it holds:

\begin{align*}
P(i,j) = \min_{1\leq h\leq i-1}\left\{P(h-1,j-1)+Q(h,i)\right\}
\end{align*}
\end{lemma}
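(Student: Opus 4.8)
The plan is to establish the recurrence by a standard optimal-substructure argument for dynamic programming over the sorted items. The key structural fact, already noted in the text preceding the statement, is that an optimal solution $\mathcal{V}$ to $\text{ET}(\mathcal{I},B,k)$ induces a \emph{sequential} partitioning of the items: each delivery time $\tau_t$ serves a contiguous block $\{h,h+1,\ldots,i\}$ of items in the sorted order $d_1\leq\ldots\leq d_n$, with blocks ordered consistently with the delivery times. This is what makes the one-dimensional recurrence valid, so I would state it explicitly at the outset and justify it from the definition of $\delta_i(\mathcal{V})=\min_{t\in\mathcal{V}}|d_i-\tau_t|$: if $i'<i''$ were assigned to a later delivery time than $i''$, swapping their assignments cannot increase the total cost, so we may assume contiguity.

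Given contiguity, I would prove the recurrence by a two-way inequality. For the ``$\leq$'' direction, fix any $h\in\{1,\ldots,i-1\}$ and take an optimal solution realizing $P(h-1,j-1)$ together with one extra delivery time serving exactly the block $\{h,\ldots,i\}$ at cost $Q(h,i)$; this yields a feasible solution for $P(i,j)$ using at most $j$ delivery times with cost $P(h-1,j-1)+Q(h,i)$, so $P(i,j)$ is bounded above by the minimum over $h$. For the ``$\geq$'' direction, take an optimal solution achieving $P(i,j)$. Let $\{h,\ldots,i\}$ be the contiguous block served by the \emph{last} delivery time (the one closest to item $i$); since item $i$ is present and the block is nonempty we have $1\leq h\leq i$, and the cost of this block is exactly $Q(h,i)$ by Lemma~\ref{Lemma:Dynamic_Programming_Constants} (and finiteness forces $d_i-d_h\leq 2B$, so the individual bound $\delta_h(\mathcal{V})\leq B$ constraint is respected). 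Removing this delivery time leaves a feasible solution for items $\{1,\ldots,h-1\}$ using at most $j-1$ delivery times, whose cost is at least $P(h-1,j-1)$ by optimality of the subproblem value. Summing gives $P(i,j)\geq P(h-1,j-1)+Q(h,i)\geq \min_{1\leq h\leq i-1}\{P(h-1,j-1)+Q(h,i)\}$, where the range can be taken up to $i-1$ because $h=i$ would require serving only the singleton $\{i\}$ by the last delivery, a case subsumed by the boundary conditions $P(i,j)=0$ for $i\leq j$ already handled separately.

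I would close by noting the feasibility bookkeeping: infinite entries propagate correctly because $Q(h,i)=+\infty$ whenever the block cannot be served within the bound $B$ (Lemma~\ref{Lemma:Dynamic_Programming_Constants}), so the recurrence automatically selects only feasible decompositions, and $P(i,j)=+\infty$ exactly when no such decomposition exists. The main obstacle, and the step I would spend the most care on, is the rigorous justification that restricting to contiguous blocks loses no optimality—i.e.\ the exchange argument showing that any optimal $\mathcal{V}$ can be taken sequential. Everything else is the routine cut-and-paste structure of dynamic programming; the contiguity reduction is where the temporal/geometric structure of the problem is genuinely used, and it is the hinge on which the correctness of the entire $O(n^3)$ scheme rests.
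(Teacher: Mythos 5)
Your overall route --- contiguity of the blocks plus a two-way cut-and-paste inequality --- is the same argument the paper gives (the paper packages it as induction on $j$ with a contradiction, but the content is identical), and your ``$\leq$'' direction, feasibility bookkeeping, and contiguity reduction are all sound. The genuine gap is your dismissal of the case $h=i$ in the ``$\geq$'' direction. You claim a singleton last block $\{i\}$ is ``subsumed by the boundary conditions $P(i,j)=0$ for $i\leq j$,'' but a singleton last block can perfectly well occur in an optimal solution with $i>j$, where those boundary conditions say nothing. Concretely, take $n=3$ with $d_1=d_2=0$, $d_3=100$, $j=2$, and $B$ large. Then $P(3,2)=0$, achieved by $\mathcal{V}=\{0,100\}$ whose last block is the singleton $\{3\}$. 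However,
\begin{align*}
\min_{1\leq h\leq 2}\left\{P(h-1,1)+Q(h,3)\right\}
= \min\left\{0+Q(1,3),\ 0+Q(2,3)\right\} = 100,
\end{align*}
since any single delivery point serving a block containing both an item due at $0$ and the item due at $100$ pays at least $100$. The optimum is recovered only by the excluded term $h=3$, namely $P(2,1)+Q(3,3)=0+0=0$.

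So the step you wave through cannot be filled as written: the minimum must be allowed to range up to $h=i$ (where $Q(i,i)=0$). This is in fact what the paper's own proof silently does --- it chooses ``an item $h\in\{2,\ldots,i\}$'' --- even though the displayed recurrence writes the range as $1\leq h\leq i-1$; that range is an off-by-one slip in the statement, and your appeal to the $i\leq j$ base case is an attempt to make the literal range work when it cannot. The correct repair is to prove the recurrence with $h$ ranging over $\{1,\ldots,i\}$ (excluding $h=1$ would also be harmless, since the $h=2$ term is never worse), rather than to restrict to $h\leq i-1$ and defer the singleton case to the boundary conditions.
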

\begin{proof}
We prove the lemma by induction on $j$.
If $j=1$, it clearly holds that $P(i,1)=Q(1,i)$, for each $i\in\{1,\ldots,n\}$.
Suppose that the lemma is true for $j-1$ delivery points and each item $i$ such that $i,j\in\{2,\ldots,n\}$ and $i>j$.
Then, consider an optimal solution $\mathcal{V}$ for $P(i,j)$.
Clearly, there exists an item $h\in\{2,\ldots,i\}$ such that $\cup_{t=1}^{j-1}\{\mathcal{D}_t\}=\{1,\ldots,h-1\}$ and $\mathcal{D}_j=\{h,\ldots,i\}$ in $\mathcal{V}$.
If $\{\tau_1,\ldots,\tau_{j-1}\}$ was not an optimal solution for $P(h-1,j-1)$ or $\sum_{i\in\mathcal{D}_j}|d_i-\tau_j|$ was not equal to $Q(h,i)$, we would obtain a contradiction that $\mathcal{V}$ is optimal for $P(i,j)$.
\end{proof}

Algorithm~\ref{Alg:Dynamic_Programming} provides a pseudocode for computing $ET(\mathcal{I},B,k)$, including the content of matrices $Q$, $P$, and the final solution.
The pseudcode contains three standard dynamic programming parts: (a) initialization, (b) recurrence, and (c) backtracking.
Lemma~\ref{Thm:Dynamic_Programming_Optimality} is an immediate corollary of Lemmas~\ref{Lemma:Dynamic_Programming_Constants}-\ref{Lemma:Dynamic_Programming_Recurrence}.

\begin{algorithm}[h] \nonumber
\caption{$\text{ET}(\mathcal{I},B,k)$}
\begin{algorithmic}[1]
\FOR {$h\in\{1,\ldots,n\}$}
\FOR {$i\in\{1,\ldots,n\}$}
\IF {$h\leq i$ and $d_i-d_h\leq 2B$}
\STATE $Q(h,i)=\sum_{i'=h}^i|d_{i'}-\lambda(h,i)|$
\ELSE
\STATE $Q(h,i)=+\infty$
\ENDIF
\ENDFOR
\ENDFOR 
\FOR {$j\in\{1,\ldots,n\}$}
\FOR {$i\in\{1,\ldots,n\}$}
\IF {$i\leq j$}
\STATE $P(i,j)=0$
\ENDIF
\IF {$j=1$}
\STATE $P(i,1)=Q(1,i)$
\ENDIF
\IF {$i>j$ and $j>1$}
\STATE $P(i,j)=\min_{h\in\{1,\ldots,i-1\}}\{P(h-1,j-1)+Q(h,i)\}$
\ENDIF
\ENDFOR
\ENDFOR
\STATE $i=n$; $j=k$
\WHILE {$j>1$}
\STATE $h=\arg\min_{h\in\{1,\ldots,n\}}\{P(h-1,j-1)+Q(h,j)\}$
\STATE $i=h-1$; $j=j-1$
\ENDWHILE
\end{algorithmic}
\label{Alg:Dynamic_Programming}
\end{algorithm}

\begin{lemma}
\label{Thm:Dynamic_Programming_Optimality}
$\text{ET}(\mathcal{I},B,k)$ can be computed in $O(n^3)$ time. 
\end{lemma}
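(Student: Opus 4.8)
The plan is to verify that Algorithm~\ref{Alg:Dynamic_Programming} correctly computes $\text{ET}(\mathcal{I},B,k)$ and then to bound the running time of its three phases. Correctness is exactly where the two preceding lemmas do the work: Lemma~\ref{Lemma:Dynamic_Programming_Constants} certifies that the initialization phase fills the matrix $Q$ with the exact single-delivery costs $Q(h,i)$ (including the $+\infty$ entries when $d_i-d_h>2B$), while Lemma~\ref{Lemma:Dynamic_Programming_Recurrence} certifies that the recurrence phase propagates these into the correct values $P(i,j)$. Since $P(n,k)$ coincides with $\text{ET}(\mathcal{I},B,k)$ and the backtracking phase reconstructs the item ranges $\mathcal{D}_1,\ldots,\mathcal{D}_k$ attaining this optimum, the algorithm returns an optimal set of $k$ delivery times. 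I would state this correctness claim first and attribute it directly to the two lemmas, so that the remainder of the argument is a pure running-time count.

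For the running time I would analyze the three loops separately. In the initialization phase there are $O(n^2)$ pairs $(h,i)$; for each, since the items are indexed in non-decreasing due-time order, the set $\{d_h,\ldots,d_i\}$ is already sorted, so its median $\med(h,i)$ and hence $\lambda(h,i)$ are obtained in $O(1)$ time, and the sum $\sum_{i'=h}^i|d_{i'}-\lambda(h,i)|$ is evaluated in $O(n)$ time. This yields $O(n^3)$ for computing $Q$; I would note in passing that prefix sums would reduce this to $O(n^2)$, though this does not affect the overall bound. In the recurrence phase the matrix $P$ has $O(n^2)$ entries, and each entry with $i>j>1$ is the minimum of $P(h-1,j-1)+Q(h,i)$ over the $O(n)$ choices of $h$, costing $O(n)$ per entry once $Q$ is tabulated, so this phase is likewise $O(n^3)$. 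The base cases $P(i,j)=0$ for $i\leq j$ and $P(i,1)=Q(1,i)$ are each $O(1)$.

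Finally, the backtracking phase executes its loop at most $k\leq n$ times, and each iteration performs an $\arg\min$ over $O(n)$ candidate split points, so this phase costs $O(n^2)$. Summing the three phases gives a total of $O(n^3)$, which dominates the backtracking term and establishes the lemma. I do not expect a genuine obstacle, since the statement is an immediate corollary of the two preceding lemmas; the only point requiring minor care is confirming that each $Q(h,i)$ entry is computable within the stated budget, namely that the median lookup is $O(1)$ thanks to the initial sort and that the linear summation per pair does not push any phase above $O(n^3)$.
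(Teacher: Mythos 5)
Your proposal is correct and follows the same route as the paper: the paper dispatches this lemma as an immediate corollary of Lemmas~\ref{Lemma:Dynamic_Programming_Constants} and \ref{Lemma:Dynamic_Programming_Recurrence} (which give correctness of the $Q$ and $P$ tables) together with the three-phase structure of Algorithm~\ref{Alg:Dynamic_Programming}, whose initialization and recurrence phases each cost $O(n^2)\cdot O(n)=O(n^3)$ and whose backtracking costs $O(n^2)$. Your write-up merely makes explicit the per-phase running-time count (including the $O(1)$ median lookup enabled by the due-time sort) that the paper leaves implicit.
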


\subsection{Approximability}
\label{Section:Decoupling_Approximation_Ratio}

Lemma~\ref{Lemma:Classic_Bin_Packing_Bound} is a classic bin packing property that we use for upper bounding the number of bins per delivery time in solutions produced by Algorithm~\ref{Alg:Decomposition}.
Theorem~\ref{Thm:Decoupling_Approximation_Ratio} establishes the algorithm's approximation ratio. 

\begin{lemma}
\label{Lemma:Classic_Bin_Packing_Bound}
Every collection $\{\delta_1,\ldots,\delta_n\}$ of $n$ integers, where $\delta_i\leq B$, for each $i\in\{1,\ldots,n\}$, can be partitioned into $m$ subsets $\mathcal{S}_1,\ldots,\mathcal{S}_{m}$ such that $m\leq 2\left(\frac{1}{B}\sum_{i=1}^{n}\delta_i\right)+1$.  
\end{lemma}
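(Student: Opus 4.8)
The plan is to construct the partition greedily with the First-Fit heuristic and then exploit the fact that a First-Fit packing leaves at most one nearly-empty bin. Concretely, I would process the integers $\delta_1,\ldots,\delta_n$ one at a time, maintaining a list of opened subsets; I place the current $\delta_i$ into the lowest-indexed subset $\mathcal{S}_\ell$ whose present total plus $\delta_i$ does not exceed $B$, and open a brand-new subset for $\delta_i$ only when no already-opened subset can accommodate it. Since every $\delta_i\leq B$ by hypothesis, each integer always fits in a freshly opened subset, so the procedure terminates with a genuine partition $\mathcal{S}_1,\ldots,\mathcal{S}_m$ in which $\sum_{i\in\mathcal{S}_\ell}\delta_i\leq B$ for every $\ell$.

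The crux of the argument is the claim that \emph{at most one} of the produced subsets has total size at most $B/2$. I would prove this by contradiction: suppose two subsets $\mathcal{S}_\ell$ and $\mathcal{S}_{\ell'}$ with $\ell<\ell'$ both end with total size at most $B/2$. Consider the first integer $\delta_i$ that First-Fit assigns to $\mathcal{S}_{\ell'}$. At the moment of that assignment, First-Fit had already examined and rejected the lower-indexed subset $\mathcal{S}_\ell$, whose load at that time was no larger than its final load, hence at most $B/2$. Rejection therefore forces $\delta_i>B-B/2=B/2$. But then $\mathcal{S}_{\ell'}$ contains an element exceeding $B/2$, so its final total also exceeds $B/2$, contradicting the assumption. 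I expect this First-Fit invariant to be the only non-routine step; the single subtlety is relating the load of $\mathcal{S}_\ell$ at the time of assignment to its final load, which is immediate because subset loads only increase during the process.

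Finally, I would sum the loads. By the claim, at least $m-1$ of the $m$ subsets have total size strictly greater than $B/2$, so $\sum_{i=1}^n\delta_i>(m-1)\tfrac{B}{2}$, where the discarded subset contributes a nonnegative amount. Rearranging gives $m-1<\tfrac{2}{B}\sum_{i=1}^n\delta_i$, and since a strict inequality implies the corresponding non-strict one this yields $m\leq 2\bigl(\tfrac{1}{B}\sum_{i=1}^n\delta_i\bigr)+1$, exactly as claimed. The opening construction and the concluding arithmetic are standard, so the whole argument hinges on the half-full-bin claim above.
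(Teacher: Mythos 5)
Your proof is correct, but it takes a different route from the paper's. You run First-Fit explicitly and use the classic invariant that at most one bin ends at most half full, then sum per-bin loads to get $\sum_{i}\delta_i>(m-1)\tfrac{B}{2}$. The paper instead argues existentially: starting from an arbitrary partition it repeatedly merges any two subsets whose combined load is below $B$, arriving at a partition in which \emph{every pair} of subsets has combined load at least $B$; it then groups the $m=2k+1$ subsets into $k$ disjoint pairs to get $\sum_i\delta_i\geq kB$, which yields the same bound. The two key properties are close cousins (First-Fit packings also satisfy the pairwise property), but the bookkeeping differs: pairwise sums over disjoint pairs versus individual half-full loads. Your version has the advantage of being algorithmic and of certifying directly that the concrete procedure invoked later in the paper (Theorem~\ref{Thm:Decoupling_Approximation_Ratio} applies the lemma to First-Fit/First-Fit-Decreasing packings of $\text{BP}(\mathcal{D}_t,B)$) attains the bound, whereas the paper's merging argument only shows such a partition exists and leaves the link to First-Fit asserted rather than proved; conversely, the paper's property is order-free and applies to any partition that is maximal under merging, not just to First-Fit outputs. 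One cosmetic point common to both arguments: the final inequality uses that the $\delta_i$ are nonnegative (so the one light bin "contributes a nonnegative amount"), which holds in the paper's application since the sizes are inventory costs, but is worth stating since the lemma as phrased only says ``integers.''
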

\begin{proof}
We claim that there exists a partitioning $\mathcal{S}_1,\ldots,\mathcal{S}_m$ with $m$ subcollections such that $\sum_{i\in\mathcal{S}_j\cup\mathcal{S}_{j'}}\delta_i\geq B$, for each $1\leq j<j'\leq m$.
Indeed, given a partitioning for which our claim is not true, we can merge $\mathcal{S}_j$ and $\mathcal{S}_{j'}$ and obtain a new partitioning with fewer subsets.
Starting from an arbitrary partitioning and repeating this merging operation, we conclude that our claim holds.
Assume w.l.o.g.\ that $m$ is an odd number, i.e.\ $m=2k+1$ for some integer $k>0$, in the resulting partitioning.
Then, we have that $\sum_{j=1}^{k}\sum_{i\in \mathcal{S}_{2j}\cup\mathcal{S}_{2j+1}}\delta_i\geq kB$.
Hence, $m\leq\frac{2}{B}\sum_{i=1}^n\delta_i+1$.
\end{proof}

\begin{theorem}
\label{Thm:Decoupling_Approximation_Ratio}
The decoupling algorithm is asymptotically 3-approximate.
\end{theorem}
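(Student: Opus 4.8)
The plan is to focus on the single iteration $k=\hat{k}$ of Algorithm~\ref{Alg:Decomposition}, where $\hat{k}$ is the number of distinct delivery times used by an optimal solution $\hat{\mathcal{S}}$ with $\hat{m}$ bins, and to show that the solution $\mathcal{S}^{(\hat{k})}$ constructed in that iteration already uses at most $3\hat{m}$ bins. Since the algorithm returns the best solution over all $k\in\{1,\ldots,n\}$, and $\hat{k}\le\hat{m}\le n$ guarantees that this iteration is actually reached, bounding $|\mathcal{S}^{(\hat{k})}|$ suffices.

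First I would transfer feasibility from $\hat{\mathcal{S}}$ to the earliness-tardiness subproblem. Let $\hat{\mathcal{V}}$ be the set of the $\hat{k}$ distinct delivery times of $\hat{\mathcal{S}}$ and $\hat{\Lambda}=\sum_{i\in\mathcal{I}}\delta_i(\hat{v}_i)$ its total inventory cost. For every item $i$ we have $\delta_i(\hat{\mathcal{V}})=\min_{t\in\hat{\mathcal{V}}}|d_i-\tau_t|\le\delta_i(\hat{v}_i)\le B$, the last bound holding because $\hat{\mathcal{S}}$ is feasible and an individual item cost never exceeds its bin's cost. Hence $\hat{\mathcal{V}}$ is a feasible point of $\text{ET}(\mathcal{I},B,\hat{k})$, so this subproblem is feasible and, by the optimality of the dynamic program (Lemma~\ref{Thm:Dynamic_Programming_Optimality}), its output $\mathcal{V}^{(\hat{k})}$ satisfies $\Delta(\mathcal{V}^{(\hat{k})})\le\sum_{i\in\mathcal{I}}\delta_i(\hat{\mathcal{V}})\le\hat{\Lambda}$. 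Combining this with $\hat{\Lambda}\le\hat{m}B$, which follows by summing the per-bin inventory bound over the $\hat{m}$ bins of $\hat{\mathcal{S}}$, yields $\tfrac{1}{B}\Delta(\mathcal{V}^{(\hat{k})})\le\hat{m}$.

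Next I would count the bins opened in iteration $\hat{k}$. Each item of $\mathcal{D}_t$ is delivered exactly at $\tau_t$, so its size $\delta_i(\mathcal{V}^{(\hat{k})})$ equals $|d_i-\tau_t|\le B$ and the size of any bin produced by $\text{BP}(\mathcal{D}_t,B)$ coincides with that bin's inventory cost; thus capacity-$B$ packing preserves feasibility. Applying Lemma~\ref{Lemma:Classic_Bin_Packing_Bound} to each of the $\hat{k}$ sets $\mathcal{D}_t$ and summing gives $|\mathcal{S}^{(\hat{k})}|\le\sum_{t\in\mathcal{V}^{(\hat{k})}}(\tfrac{2}{B}\sum_{i\in\mathcal{D}_t}\delta_i(\mathcal{V}^{(\hat{k})})+1)=\tfrac{2}{B}\Delta(\mathcal{V}^{(\hat{k})})+\hat{k}\le 2\hat{m}+\hat{k}$, and since $\hat{k}\le\hat{m}$ this is at most $3\hat{m}$.

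The main obstacle is the feasibility transfer of the first step: I must be careful that reassigning each item to its nearest time in $\mathcal{V}^{(\hat{k})}$ keeps every individual cost at most $B$ and that the bin-packing sizes faithfully reproduce the eventual per-bin inventory costs, so that the returned $\mathcal{S}^{(\hat{k})}$ is genuinely feasible rather than merely small. A secondary point worth stating cleanly is why the additive $\hat{k}$ term, coming from the $+1$ per delivery time in Lemma~\ref{Lemma:Classic_Bin_Packing_Bound}, is absorbed only through $\hat{k}\le\hat{m}$, which accounts for the asymptotic nature of the $3$-approximation.
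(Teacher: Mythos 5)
Your proof is correct and follows essentially the same route as the paper's: transfer the optimal solution's delivery times into the $\text{ET}$ subproblem to get $\Delta(\mathcal{V})\le\sum_{i\in\mathcal{I}}\delta_i(\hat{\mathcal{V}})\le\hat{m}B$, then apply Lemma~\ref{Lemma:Classic_Bin_Packing_Bound} to each delivery time and sum. The only (harmless) difference is that you analyze iteration $k=\hat{k}$, the number of distinct delivery times of the optimum, whereas the paper analyzes $k=\hat{m}$; this makes your additive term $\hat{k}\le\hat{m}$ rather than $\hat{m}$, a marginally sharper bound yielding the same asymptotic ratio of $3$.
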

\begin{proof}
Consider a container delivery scheduling instance $\langle\mathcal{I},B\rangle$ and an optimal solution $\hat{\mathcal{S}}$ with $\hat{m}$ bins for it.
We argue that the decoupling algorithm obtains a solution for $\langle\mathcal{I},B\rangle$ with $m$ bins, s.t.\ $m\leq2\hat{m}+1$.
For this, we elaborate on the solution $\mathcal{S}$ produced by the $k$-th iteration of the algorithm, where $k=\hat{m}$.
Let $\mathcal{V}=\{\tau_1,\ldots,\tau_k\}$ be the set of distict delivery times and denote by $m_t$ the number of bin deliveries at time $\tau_t$, for $t\in\mathcal{V}$, in $\mathcal{S}$.
Standard bin algorithms, e.g.\ First-Fit or First-Fit Decreasing, build solutions for $\text{BP}(\mathcal{D}_t^{(k)},B)$ satisfying Lemma~\ref{Lemma:Classic_Bin_Packing_Bound}.
So, $m_t\leq\frac{2}{B}\sum_{i\in\mathcal{D}_t}\delta_i(\mathcal{V})+1$. 
Therefore, we get
$m = \sum_{t\in\mathcal{V}}m_t \leq \frac{2}{B}\sum_{i\in\mathcal{I}}\delta_i(\mathcal{V}) + \hat{m}$.
Because each bin delivery satisfies the inventory bound in $\hat{\mathcal{S}}$, it must be the case that $\frac{1}{B}\sum_{i\in\hat{\mathcal{S}}_j}|d_i-\hat{\mu}_j|\leq 1$, for each $j\in\{1,\ldots,\hat{m}\}$.
Let $\hat{\mathcal{V}}$ be the set of distinct delivery times in $\hat{\mathcal{S}}$.
By distinguishing the delivery time $\hat{v}_i$ of each item $i\in\mathcal{I}$ in $\hat{\mathcal{S}}$ and the closest time in $\hat{\mathcal{V}}$ to $d_i$, we observe that
$\delta_i(\hat{\mathcal{V}})=\min_{t\in\mathcal{V}}\{|d_i-\hat{\tau}_t|\}\leq |d_i-\hat{v}_i|$.
A simple packing argument implies that
$\hat{m} \geq \frac{1}{B}\sum_{i\in\hat{\mathcal{S}}_j}|d_i-\hat{v}_i|\geq \frac{1}{B}\sum_{i\in\mathcal{I}}\delta_i(\hat{\mathcal{V}})$.
Because $\mathcal{V}$ is an optimal solution to $\text{ET}(\mathcal{I},B,k)$, it holds that $\Delta(\mathcal{V})\leq\Delta(\hat{\mathcal{V}})$, or equivalently $\sum_{i\in\mathcal{I}}\delta_i(\mathcal{V})\leq \sum_{i\in\mathcal{I}}\delta_i(\hat{\mathcal{V}})$.
The theorem follows.
\end{proof}

\subsection{Schedule Refinement}
\label{Section:Decoupling_Refinement}

The decoupling algorithm presented in Sections~\ref{Section:Decoupling_Description}-\ref{Section:Decoupling_Approximation_Ratio} computes a set $\mathcal{V}$ of $k$ distinct delivery times s.t.\ the total distance $\sum_{i\in\mathcal{I}}\delta_i(\mathcal{V})$ between the due time of each item $i\in\mathcal{I}$ and the closest delivery time in $\mathcal{V}$ is minimal. 
Suppose that $\mathcal{V}=\{\tau_1,\ldots,\tau_k\}$, where $\tau_1<\ldots<\tau_k$.
The time horizon can be partitioned into a set $\mathcal{T}=\{1,\ldots,k\}$ of $k$ intervals $[a_1,b_1],\ldots,[a_k,b_k]$, s.t.\ $[a_t,b_t]$ contains $\tau_t$ and the due times of the items in $\mathcal{I}_t=\{i:\arg\min_{\tau\in\mathcal{V}}\{\delta_i(\tau)\}=\tau_t,i\in\mathcal{I}\}$, for each $t\in\mathcal{V}$, that are closest to $\tau_t$ compared to any other time in $\mathcal{V}$.
If the are ties, we assign each item $i$ to exactly one, arbitrarily chosen, time in $\mathcal{V}$ that is closest to $d_i$.
For a given interval $t\in\mathcal{T}$, the algorithm schedules all items in $\mathcal{I}_t$ to be delivered at $\tau_t$.
This decision is appropriate for our purposes when $\sum_{i\in\mathcal{I}_t}\delta_i(\tau)\leq B$.
However, there is a solution quality degradation when $\sum_{i\in\mathcal{I}_t}\delta_i(\tau)>B$.
We manage this issue by modifying the bin delivery times in intervals of the latter type.
The refinement allows reducing the approximation ratio of the algorithm down to 2.

Partition $\mathcal{T}$ into the subsets $\mathcal{X}=\{t:m_t=1,t\in\mathcal{T}\}$ and $\mathcal{Y}=\{t:m_t\geq2,t\in\mathcal{T}\}$ of intervals containing exactly one and at least two bin deliveries in $\mathcal{S}$, respectively.
We select new bin delivery times for each interval $t\in\mathcal{Y}$.
Let $\mathcal{A}_t=\{i:d_i\leq\tau_t,i\in\mathcal{I}_t\}$ and $\mathcal{B}_t=\{i:d_i>\tau_t,i\in\mathcal{I}_t\}$ be the subsets of items with due times not later than and after $\tau_t$, respectively.
Also, set $n_t^{\mathcal{A}}=|\mathcal{A}_t|$ and $n_t^{\mathcal{B}}=|\mathcal{B}_t|$.
Denote by $\pi$ the sequence of items in $\mathcal{A}_t$, sorted in non-decreasing order of due times.
That is, $\mathcal{A}_t=\{\pi(1),\ldots,\pi(n_t^{\mathcal{A}})\}$ and $d_{\pi(1)}\leq\ldots\leq d_{\pi(n_t^{\mathcal{A}})}$.
Suppose that there exists an item $\pi(i)\in\mathcal{A}_t$ s.t.\ $\sum_{g=1}^{i}\delta_g(\tau_t)>B$.
Then, we pick the smallest indexed such $\pi(i)$ item in the sequence and add a bin delivery at $d_{\pi(i)}$ including exactly the items in $\{\pi(1),\ldots,\pi(i)\}$.
We repeat by considering the items $\{\pi(h),\ldots,\pi(n_t^{\mathcal{A}})\}$, where $h=i+1$.
That is, we identify a minimum index item $\pi(i')$ s.t.\ $\sum_{g=h}^{i'}\delta_{\pi(g)}(\tau_t)>B$, we add a bin delivery at $d_{\pi(i')}$ containing exactly the items $\{\pi(h),\ldots,\pi(i')\}$ and so on.
Next, we perform the same process for the $\mathcal{B}_t$ items by considering their non-increasing order $\xi$ of due times, i.e.\ $\mathcal{B}_t=\{\xi(1),\ldots,\xi(n_t^{\mathcal{B}})\}$ and $d_{\xi(1)}\geq\ldots\geq d_{\xi(n_t^{\mathcal{B}})}$.
Specifically, we begin by identifying a maximal index item $\xi(i)\in\mathcal{B}_t$ s.t.\ $\sum_{g=i}^{n_t^{\mathcal{B}}}\delta_{\xi(g)}(\tau_t)>B$, add a bin delivery at $d_{\xi(i)}$, etc.
For the remaining $\widetilde{\mathcal{A}}_t\subseteq\mathcal{A}_t$ and $\widetilde{\mathcal{B}}_t\subseteq\mathcal{B}_t$ items that have not been assigned to a bin delivery, it holds that $\sum_{i\in\widetilde{\mathcal{A}}_t}\delta_i(\tau_t)\leq B$ and $\sum_{i\in\widetilde{\mathcal{B}}_t}\delta_i(\tau_t)\leq B$.
If $\sum_{i\in\widetilde{\mathcal{A}}_t\cup\widetilde{\mathcal{B}}_t}\delta_i(\tau_t)\leq B$, then we assign all these items to a single bin delivery at $\tau_t$.
Otherwise, we use one bin delivery for the $\widetilde{\mathcal{A}}_t$ items and another one for the $\widetilde{\mathcal{B}}_t$ items.
Algorithm~\ref{Alg:Refinement} more formally describes this refinement of the delivery times during a $t\in\mathcal{Y}$ interval in a solution produced by the decouling algorithm.

\begin{algorithm}[h] \nonumber
\caption{$\text{Refinement}(t)$}
\begin{algorithmic}[1]
\STATE $\widetilde{\mathcal{A}}_t=\{i:d_i\leq\tau_t,i\in\mathcal{I}_t\}$
\STATE Sort the $\widetilde{\mathcal{A}}_t$ items s.t.\ $\widetilde{\mathcal{A}}_t=\{\pi(1),\ldots,\pi(n_t^{\mathcal{A}})\}$ and $d_{\pi(1)}\leq\ldots\leq d_{\pi(n_t^{\mathcal{A}})}$.
\STATE $h=1$
\WHILE {$\sum_{\pi(g)\in\widetilde{\mathcal{A}}_t}\delta_{\pi(g)}(\tau_t)>B$}
\STATE Find min index $\pi(i)\in\widetilde{\mathcal{A}}_t$ s.t.\ $\sum_{g=h}^i\delta_{\pi(g)}(\tau_t)>B$.
\STATE Schedule delivery at $d_{\pi(i)}$ with items $\pi(h),\ldots,\pi(i)$.
\STATE $\widetilde{\mathcal{A}}_t=\widetilde{\mathcal{A}}_t\setminus\{\pi(h),\ldots,\pi(i)\}$.
\STATE $h=i+1$.
\ENDWHILE
\STATE $\widetilde{\mathcal{B}}_t=\{i:d_i>\tau_t,i\in\mathcal{I}_t\}$
\STATE Sort the $\widetilde{\mathcal{B}}_t$ items s.t.\ $\widetilde{\mathcal{B}}_t=\{\xi(1),\ldots,\xi(n_t^{\mathcal{B}})\}$ and $d_{\xi(1)}\geq\ldots\geq d_{\xi(n_t^{\mathcal{B}})}$
\STATE $h=n_t^{\mathcal{B}}$
\WHILE {$\sum_{\xi(g)\in\widetilde{\mathcal{B}}_t}\delta_{\xi(g)}(\tau_t)>B$}
\STATE Find max index $\xi(i)\in\widetilde{\mathcal{B}}_t$ s.t.\ $\sum_{g=i}^h\delta_{\xi(g)}(\tau_t)>B$.
\STATE Schedule delivery at $d_{\xi(i)}$ with items $\xi(i),\ldots,\xi(h)$.
\STATE $\widetilde{\mathcal{B}}_t=\widetilde{\mathcal{B}}_t\setminus\{\xi(h),\ldots,\xi(i)\}$.
\STATE $h=i-1$.
\ENDWHILE
\IF {$\sum_{g\in\widetilde{\mathcal{A}}_t\cup\widetilde{\mathcal{B}}_t}\delta_g(\tau_t)>B$} 
\STATE Schedule bin delivery at $\tau_t$ for each among $\widetilde{\mathcal{A}}_t$ and $\widetilde{\mathcal{B}}_t$.
\ELSE
\STATE Schedule one bin delivery at $\tau_t$ for the $\widetilde{\mathcal{A}}_t\cup\widetilde{\mathcal{B}}_t$ items. 
\ENDIF
\end{algorithmic}
\label{Alg:Refinement}
\end{algorithm}

\begin{lemma}
\label{Lemma:Decoupling_Interval_Bound}
In a solution $\mathcal{S}$ produced by the refined decoupling algorithm, it holds that $m_t\leq \frac{1}{B}\sum_{i\in\mathcal{I}_t}\delta_i(\tau_t)+1$, for each time interval $t\in\mathcal{Y}$ with a delivery time at $\tau_t\in[a_t,b_t]$.
\end{lemma}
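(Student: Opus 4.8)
The plan is to account for the bins opened by $\text{Refinement}(t)$ in Algorithm~\ref{Alg:Refinement} by separating them into the \emph{heavy} bins created inside the two \texttt{while} loops and the one or two \emph{residual} bins scheduled at the end, and then to charge $B$ units of load to all but one of them. I would write $L=\sum_{i\in\mathcal{I}_t}\delta_i(\tau_t)$ for the total load of the interval measured against $\tau_t$, let $p$ and $q$ be the numbers of bins opened by the first and second \texttt{while} loops respectively, and let $r\in\{1,2\}$ be the number of residual bins, so that $m_t=p+q+r$.

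First I would establish the key property of every heavy bin: a group $\{\pi(h),\dots,\pi(i)\}$ opened in the first loop is chosen so that $\sum_{g=h}^{i}\delta_{\pi(g)}(\tau_t)>B$, and symmetrically the groups $\{\xi(i),\dots,\xi(h)\}$ of the second loop satisfy $\sum_{g=i}^{h}\delta_{\xi(g)}(\tau_t)>B$. Hence each of the $p+q$ heavy bins carries load strictly greater than $B$ relative to $\tau_t$, and since these item sets are pairwise disjoint subsets of $\mathcal{I}_t$, their loads sum to strictly more than $(p+q)B$.

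The crux is then the accounting for the residual bins. If $r=1$, the residual items are all delivered at $\tau_t$, so the heavy bins alone give $L>(p+q)B=(m_t-1)B$. If $r=2$, the algorithm entered that branch only because $\sum_{i\in\widetilde{\mathcal{A}}_t\cup\widetilde{\mathcal{B}}_t}\delta_i(\tau_t)>B$; adding this to the heavy-bin load yields $L>(p+q)B+B=(m_t-1)B$. In either case $L>(m_t-1)B$, which rearranges to $m_t<\frac{1}{B}L+1$ and hence to the claimed bound (for $t\in\mathcal{Y}$ we have $L>B>0$, so the degenerate all-at-$\tau_t$ situation does not arise).

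I expect the residual case $r=2$ to be the delicate point, since it is the only place where a bin that is not provably heavy on its own is opened; the bound survives precisely because the guard of the final \texttt{if} certifies that the two residual bins jointly carry load exceeding $B$, so they absorb the single unit of slack between them. A secondary point I would verify in passing, so that the counted objects are genuine bins of a feasible schedule, is that each heavy bin is feasible: the $\mathcal{A}_t$-group delivered at $d_{\pi(i)}$ has load $\sum_{g=h}^{i}(d_{\pi(i)}-d_{\pi(g)})\leq\sum_{g=h}^{i-1}(\tau_t-d_{\pi(g)})\leq B$, using $d_{\pi(i)}\leq\tau_t$ together with the minimality of the index $i$, and the symmetric estimate handles the $\mathcal{B}_t$-groups.
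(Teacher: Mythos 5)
Your proposal is correct and follows essentially the same argument as the paper's proof: you charge a load of more than $B$ (measured at $\tau_t$) to each bin opened inside the while loops, let the one or two residual bins at $\tau_t$ absorb the additive $+1$ (using the final if-guard to certify that two residual bins jointly carry load exceeding $B$), and verify feasibility of the relocated bins via the prefix bound $\sum_{g=h}^{i-1}\delta_{\pi(g)}(\tau_t)\leq B$. The paper phrases this as separate bounds $m_t^-\leq \frac{1}{B}\sum_{i\in\mathcal{I}_t^-}\delta_i(\tau_t)$ and $m_t^+\leq \frac{1}{B}\sum_{i\in\mathcal{I}_t^+}\delta_i(\tau_t)+1$ rather than your single aggregated inequality $L>(m_t-1)B$, but the substance is identical.
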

\begin{proof}
In a time interval $t\in\mathcal{Y}$, denote by $\mathcal{I}_t^+$ and $m_t^+$ the subset of items and the number of bins delivered at time $\tau_t$. 
Also, let $\mathcal{I}_t^-=\mathcal{I}_t\setminus\mathcal{I}_t^+$ and $m_t^-=m_t-m_t^+$ be the subset of items and number of bins delivered at any time during $[a_t,b_t]$ other than $\tau_t$.
For each bin delivery in $t$, we consider two cases based on whether it takes place at $\tau_t$, or not.

Initially, consider a bin delivery taking place during $[a_t,\tau_t)$. 
Clearly, $\sum_{i\in\mathcal{A}_t}\delta_i(\tau_t)>B$.
Suppose that the algorithm schedules its first bin delivery at time $d_{\pi(i)}$, where $i\in\{2,\ldots,n_t^{\mathcal{A}}-1\}$.
That is, right after the refinement, the items in $\mathcal{A}_t^-=\{\pi(1),\ldots,\pi(i)\}$ are delivered at $d_{\pi(i)}$, and the items in $\mathcal{A}_t^+=\{\pi(i+1),\ldots,\pi(n_t^{\mathcal{A}})\}$ remain to be scheduled.
Since the algorithm did not add a bin delivery at $d_{\pi(i-1)}$, we have $\sum_{g=1}^{i-1}\delta_{\pi(g)}(\tau_t)\leq B$.
Thus, the items $\{\pi(1),\ldots,\pi(i)\}$ can feasibly delivered all together at $d_{\pi(i)}$ using a single bin, since such a bin delivery would incur an inventory cost 
$\sum_{g=1}^{i-1}\delta_{\pi(g)}(d_{\pi(i)}) 
\leq \sum_{g=1}^{i-1}\delta_{\pi(g)}(\tau_t)\leq B$.
This argument holds for any refined bin delivery that does not take place at $\tau_t$.
So, each of the $m_t^-$ deliveries allows packing a subset $\mathcal{S}\subseteq\mathcal{I}_t$ of items that incur an inventory cost $\sum_{i\in\mathcal{S}}\delta_i(\tau_t)>B$ in the solution of the original decoupling algorithm, in a single bin delivery.
Thus, in the refined solution, we have that $m_t^-\leq\frac{1}{B}\sum_{i\in\mathcal{I}_t^-}\delta_i(\tau_t)$.

Now, consider the bin deliveries taking place at time $\tau_t$.
If $\widetilde{\mathcal{A}}_t$ and $\widetilde{\mathcal{B}}_t$ are the subsets of $\mathcal{A}_t$ and $\mathcal{B}_t$ items, respectively, delivered at $\tau_t$ in the refined solution, then it holds that $\sum_{i\in\mathcal{A}_t}\delta_i(\tau_t)\leq B$ and $\sum_{i\in\mathcal{B}_t}\delta_i(\tau_t)\leq B$, by construction.
If the items in $\widetilde{\mathcal{A}}_t\cup\widetilde{\mathcal{B}}_t$ fit in a single bin containing exactly those items, then
$m_t^+=1$. 
Otherwise, it must be the case that $\sum_{i\in\mathcal{I}_t}\delta_i(\tau_t)>B$, but using one bin delivery for the $\widetilde{\mathcal{A}}_t$ items and another for the $\widetilde{\mathcal{B}}_t$ items produces a feasible solution, i.e.\ $m_t^+=2$.
In both subcases, we have that $m_t^+\leq\frac{1}{B}\sum_{i\in\mathcal{I}_t^+}\delta_i(\tau_t)+1$.
We conclude that $m_t=m_t^-+m_t^+\leq\frac{1}{B}\sum_{i\in\mathcal{I}_t^-}\delta_i(\tau_t)+\frac{1}{B}\sum_{i\in\mathcal{I}_t^+}\delta_i(\tau_t)+1=\frac{1}{B}\sum_{i\in\mathcal{I}_t}\delta_i(\tau_t)+1$.
\end{proof}

\begin{theorem}
The refined decoupling algorithm is tightly $2$-approximate.
\end{theorem}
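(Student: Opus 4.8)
The plan is to combine the per-interval bound of Lemma~\ref{Lemma:Decoupling_Interval_Bound} with the global inventory estimate already established in the proof of Theorem~\ref{Thm:Decoupling_Approximation_Ratio}; the only new ingredient needed for the upper bound is that the refinement replaces the coefficient $2$ of the inventory term by $1$. As before, I would fix the iteration $k=\hat{m}$ of Algorithm~\ref{Alg:Decomposition}, where $\hat{m}$ is the number of bins in an optimal solution $\hat{\mathcal{S}}$. This iteration is feasible, since the distinct delivery times of $\hat{\mathcal{S}}$ (at most $\hat{m}$ of them) form a feasible solution to $\text{ET}(\mathcal{I},B,\hat{m})$, and the solution returned by the algorithm uses no more bins than the one built in this iteration. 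Let $\mathcal{V}=\{\tau_1,\ldots,\tau_k\}$ be the computed delivery times, let $\mathcal{T}$ be the induced intervals, and let $\mathcal{X},\mathcal{Y}$ partition $\mathcal{T}$ into intervals carrying one and at least two deliveries in the unrefined solution, respectively.

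First I would bound the bins interval by interval. For $t\in\mathcal{X}$ the refinement is not applied and a single delivery remains at $\tau_t$, so $m_t=1$ (feasible because $\sum_{i\in\mathcal{I}_t}\delta_i(\tau_t)\le B$ already held); for $t\in\mathcal{Y}$, Lemma~\ref{Lemma:Decoupling_Interval_Bound} gives $m_t\le\frac{1}{B}\sum_{i\in\mathcal{I}_t}\delta_i(\tau_t)+1$. Summing, using $|\mathcal{X}|+|\mathcal{Y}|=k$, and using $\delta_i(\tau_t)=\delta_i(\mathcal{V})$ for $i\in\mathcal{I}_t$ together with the fact that $\{\mathcal{I}_t\}_{t\in\mathcal{T}}$ partitions $\mathcal{I}$, I obtain
\[
m=\sum_{t\in\mathcal{X}}m_t+\sum_{t\in\mathcal{Y}}m_t\le k+\frac{1}{B}\sum_{t\in\mathcal{Y}}\sum_{i\in\mathcal{I}_t}\delta_i(\tau_t)\le \hat{m}+\frac{1}{B}\sum_{i\in\mathcal{I}}\delta_i(\mathcal{V}).
\]
The key improvement over Theorem~\ref{Thm:Decoupling_Approximation_Ratio} is the unit coefficient on the inventory term, which is exactly what Lemma~\ref{Lemma:Decoupling_Interval_Bound} (and hence the refinement) supplies.

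Then I would reuse verbatim the inventory estimate from the proof of Theorem~\ref{Thm:Decoupling_Approximation_Ratio}. Denoting by $\hat{\mathcal{V}}$ the distinct delivery times of $\hat{\mathcal{S}}$ and by $\hat{v}_i$ the delivery time of item $i$ in $\hat{\mathcal{S}}$, optimality of $\mathcal{V}$ for $\text{ET}(\mathcal{I},B,\hat{m})$ and feasibility of each bin of $\hat{\mathcal{S}}$ give $\sum_{i\in\mathcal{I}}\delta_i(\mathcal{V})\le\sum_{i\in\mathcal{I}}\delta_i(\hat{\mathcal{V}})\le\sum_{i\in\mathcal{I}}|d_i-\hat{v}_i|\le \hat{m}B$. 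Substituting into the previous display yields $m\le\hat{m}+\hat{m}=2\hat{m}$, which establishes the $2$-approximation (indeed without an additive term).

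For tightness I would exhibit a family of instances on which the algorithm uses $(2-o(1))\hat{m}$ bins. The mechanism to target is the additive $+1$ per $\mathcal{Y}$ interval: I would design the due times so that the optimal $\text{ET}$ grouping places, in each of $\hat{m}$ intervals, an early block $\mathcal{A}_t$ and a late block $\mathcal{B}_t$ whose inventory costs at $\tau_t$ are each just below $B$ but whose sum exceeds $B$, so the refinement is forced to open two bins per interval, for a total near $2\hat{m}$; at the same time the due times should admit a nested/``diagonal'' optimal packing (in the spirit of the Example following Theorem~\ref{Theorem:Median_Scheduling}) that draws one item from each interval into shared bins delivered at well-chosen times, so that $\hat{m}$ bins suffice. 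The hard part will be this construction: unlike the greedy sequential algorithms, the delivery times here are selected by the dynamic program $\text{ET}(\mathcal{I},B,\hat{m})$, so I must verify that the adversarial instance genuinely forces $\text{ET}$ to return the intended straddling groups rather than some cheaper set of times that the refinement could pack more economically, which amounts to controlling the optimum of the earliness--tardiness subproblem instead of merely the behaviour of a greedy rule.
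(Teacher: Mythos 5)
Your upper-bound half is correct and is essentially the paper's own argument: fix the iteration $k=\hat{m}$, split the induced intervals into $\mathcal{X}$ (one bin) and $\mathcal{Y}$ (at least two bins), apply Lemma~\ref{Lemma:Decoupling_Interval_Bound} to the $\mathcal{Y}$ intervals, and bound $\frac{1}{B}\sum_{i\in\mathcal{I}}\delta_i(\mathcal{V})\leq\hat{m}$ via optimality of $\text{ET}(\mathcal{I},B,\hat{m})$ against the delivery times of $\hat{\mathcal{S}}$. The paper does exactly this, writing $\hat{m}=x+y$, $m\leq x+\sum_{t\in\mathcal{Y}}m_t$, and $\sum_{t\in\mathcal{Y}}m_t\leq y+\frac{1}{B}\sum_{i\in\mathcal{I}}\delta_i(\mathcal{T})$, hence $m\leq2\hat{m}$; your version even spells out two steps the paper leaves implicit, namely the feasibility of the $k=\hat{m}$ iteration and the chain $\sum_{i\in\mathcal{I}}\delta_i(\mathcal{V})\leq\sum_{i\in\mathcal{I}}\delta_i(\hat{\mathcal{V}})\leq\sum_{i\in\mathcal{I}}|d_i-\hat{v}_i|\leq\hat{m}B$ carried over from Theorem~\ref{Thm:Decoupling_Approximation_Ratio}.

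The genuine gap is the tightness half of the statement, which is part of the claim ("tightly $2$-approximate"). You describe the mechanism you would target (each $\mathcal{Y}$ interval straddles two blocks whose costs are individually below $B$ but jointly above it, forcing two bins per interval, while a diagonal packing achieves $\hat{m}$), but you stop short of an actual instance and you yourself flag the unresolved difficulty: verifying what $\text{ET}(\mathcal{I},B,\hat{m})$ returns on the adversarial family. The paper does not build a new family at all; it closes this half in one line by reusing the instance already constructed at the end of Section~\ref{Section:Median_Deliveries} (the $\mathcal{A}$/$\mathcal{B}$ example following Theorem~\ref{Theorem:Median_Scheduling}), on which the optimal solution uses $k$ bins while the refined decoupling algorithm needs about $2k$. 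So your plan points in the right direction --- that example has precisely the "straddling blocks plus diagonal optimum" structure you describe --- but as written your proposal proves only the $2$-approximation, not the tightness. To complete it you would either have to carry out your construction and actually control the optimum of the earliness--tardiness subproblem on it, or argue that the existing Section~\ref{Section:Median_Deliveries} instance forces the dynamic program into the two-bins-per-interval configuration, which is the (unspelled-out) content of the paper's final sentence.
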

\begin{proof}
Consider an optimal solution $\hat{\mathcal{S}}$ with $\hat{m}$ bins and the schedule $\mathcal{S}$ with $m$ bins produced by the algorithm by calculating $k=\hat{m}$ bin delivery times.
Let $\mathcal{T}=\{1,2,\ldots,k\}$ be the time horizon partitioning obtained from the algorithm's solution and recall
the partition of $\mathcal{T}$ into the subsets $\mathcal{X}=\{t:m_t=1,t\in\mathcal{T}\}$ and $\mathcal{Y}=\{t:m_t\geq2,t\in\mathcal{T}\}$ containing exactly one and at least two bin deliveries in $\mathcal{S}$.
By definition, it must be the case that $\hat{m}=x+y$.
Given that $\hat{\mathcal{S}}$ is feasible and that the refined decoupling algorithm computes the initial delivery points so that the total inventory cost is minimal, we have that $\hat{m}\geq\frac{1}{B}\sum_{i\in\mathcal{I}}\delta_i(\mathcal{T})$.
On the other hand, we have that $m\leq x+\sum_{t\in\mathcal{Y}}m_t$.
By Lemma~\ref{Lemma:Decoupling_Interval_Bound}, $\sum_{t\in\mathcal{Y}}m_t\leq y+\frac{1}{B}\sum_{i\in\mathcal{I}}\delta_i(\mathcal{T})$.
Hence, $m\leq 2\hat{m}$.
The tightness of our analysis is derived by considering the instance described at the end of Section~\ref{Section:Median_Deliveries}.
\end{proof}

\section{Concluding Remarks}
\label{Section:Conclusion}


We introduce the container delivery scheduling problem as a model for jointly optimizing transportation and inventory costs when processing orders with collection in supply chains.
Our results demonstrate that delivering all orders on time may result in substantial transportation costs.
However, significantly better performance guarantees are achievable with more flexible delivery scheduling strategies that tolerate bounded storage and backlog costs.
Such bounds can be useful to manufacturers when negotiating costs of delay in customer orders.

Our main contribution is provably efficient algorithms for solving the problem.
We develop an 8/3-approximate algorithm based on a greedy sequential approach.
In addition, we propose an asymptotically tight 2-approximate algorithm by decoupling delivery scheduling decisions from assignments of items to bin deliveries.
A key advantage of the latter approach is the tighter bounds on the total inventory cost of the items.
The current manuscript has a focus on computing sequential solutions.
However, the non-refined version of the decoupling algorithm may compute nested solutions.
Exploring possible improvements of approximation bounds by exploiting nested and non-sequential solution structures is an intriguing direction for future work. 
We expect the bounds and insights presented in the current manuscript to be useful for this purpose.


The container delivery scheduling problem can be part of more complex optimization problems arising in supply chain distribution.
For example, the development of optimization and algorithmic approaches for effectively solving capacitated versions of the problem with additional volumetric dimensions would be of interest.
Finally, robust approaches would be particularly useful for problem instances arising in practice since problem solutions are highly sensitive to early or delayed deliveries, e.g.\ due to uncertainties occuring during shipping \cite{Letsios2021b}.


\bibliographystyle{elsarticle-harv} 
\bibliography{refs}


\appendix
\section{Nomenclature}

\begin{longtable}{l l l}
\caption{Core Notation}\\
\toprule
Name & Description \\
\midrule
\multicolumn{2}{l}{\bf Problem Definition} & \\
$\mathcal{I}$ & Set of items. \\
$n$ & Number of items. \\
$d_i$ & Due time of item $i$. \\
$v_i$ & Delivery time of item $i$. \\
$\delta_i(\tau)$ & Inventory cost $|d_i-\tau|$ of item $i$ if delivered at time $\tau$. \\
$\delta(\sigma,\tau)$ & Length $|\tau-\sigma|$ of interval between time points $\sigma$ and $\tau$ \\
$B$ & Container inventory bound. \\
$\mathcal{S}$ & Feasible solution, e.g.\ one computed by an algorithm. \\
$\hat{\mathcal{S}}$ & Optimal solution. \\
$m$ (or $\hat{m}$) & Number of container deliveries in $\mathcal{S}$ (resp.\ $\hat{\mathcal{S}}$). \\
$\mathcal{S}_j$ (or $\hat{\mathcal{S}}_j$) & Subset of items assigned to bin $j$ in $\mathcal{S}$ (resp.\ $\hat{\mathcal{S}}$). \\
$\mu_j$ (or $\hat{\mu}_j$) & Delivery time of bin $j$ in $\mathcal{S}$ (resp.\ $\hat{\mathcal{S}}$). \\
\midrule
\multicolumn{2}{l}{\bf Preliminaries} & \\
$\sigma, \tau$ & Auxiliary notation for delivery or other time instants. & \\
$\delta(\sigma,\tau)$ & Length of time interval between times $\sigma$ and $\tau$. & \\
$\delta_i(\tau)$ & Length of time interval between times $d_i$ and $\tau$. & \\
$a_j,b_j$ & Min and max due date among the items in $\mathcal{S}_j$. \\
$\mathcal{S}_j^-$ (or $\mathcal{S}_j^+$) & Subset of items in $\mathcal{S}_j$ with due date not later than $\mu_j$. \\
$n_j^-$ (or $n_j^+$) & Cardinality of set $\mathcal{S}_j^-$ (resp.\ $\mathcal{S}_j^+$). \\
$\langle\mathcal{A},\beta\rangle$ & 3-Partition instance. \\
\midrule
\multicolumn{2}{l}{\bf Sequential Algorithm Analysis (Negative Result, Section 3.1)} & \\
$\ell$ & Number of distinct due times. \\
$\tau_t$ & $t$-th distinct due time. \\
$\gamma_t$ & Time interval length between $\tau_{t-1}$ and $\tau_t$. \\
$n_t$ & Number of jobs with due date at $\tau_t$. \\
\midrule
\multicolumn{2}{l}{\bf Sequential Algorithm Analysis (Positive Results, Section 3.2)} & \\
$[a_t,b_t]$ or $t$ & Time interval between the earliest and latest due time \\
& among the items assigned to two consecutive bin \\
& deliveries $j=2t-1$ and $j+1=2t$, i.e.\ $j$ is odd. \\
$\mathcal{T}$ & Set of $[a_t,b_t]$ time intervals. \\
$m_t$ (or $\hat{m}_t$) & Number of bin deliveries in interval $[a_t,b_t]$ in $\mathcal{S}$ (or $\hat{\mathcal{S}}$). \\
$\lambda_t$ (or $\hat{\lambda}_t$) & Total inventory cost incurred by the $\mathcal{I}_t$ items in $\mathcal{S}$ (or $\hat{\mathcal{S}}$). \\
$\mathcal{X}$ & Subset of $\mathcal{T}$ intervals with $\hat{m}_t=0$. \\
$\mathcal{Y}$ & Subset of $\mathcal{T}$ intervals with $\hat{m}_t\geq1$. \\
$\mathcal{I}_t$ & Subset of items assigned to the bin deliveries during in $t$. \\
$\mathcal{A}_t$, $\mathcal{B}_t$ & Subsets of items assigned to each of the two bin \\
& deliveries during time interval $t$. \\
\midrule
\multicolumn{2}{l}{\bf Decoupling Algorithm Analysis (Sec.\ 4)} & \\
$\mathcal{V}$ & Set of dinstict delivery times computed in the $k$-th \\
& iteration of the algorithm. \\
$k$ & Cardinality of $\mathcal{V}$. \\
$\tau_t$ & $t$-th distinct delivery time. \\
$\delta_i(\mathcal{V})$ & Distance between $d_i$ and the closest time in $\mathcal{V}$. \\
$\Delta(\mathcal{V})$ & Total inventory cost if each item $i\in\mathcal{I}$ is delivered at the \\
& time $\tau$ which is the closest to $d_i$ among the times in $\mathcal{V}$. \\
$\mathcal{D}_t$ & Subset of items delivered at time $\tau_t$. \\
$\text{ET}(\mathcal{I},B,k)$ & Subproblem of computing $k$ times s.t.\ $\Delta(\mathcal{V})$ is minimized \\
& and no item has inventory cost more than $B$. \\
$\text{BP}(\mathcal{D}_t,B)$ & Classic bin packing problem with item sizes $\delta_i(\mathcal{V})$ \\
& and bin capacities $B$. \\
\bottomrule
\label{tbl:notation}
\end{longtable}

\end{document}